\patchcmd{\section}{\normalfont\scshape\centering}{\normalfont\bfseries}{}{}
\patchcmd{\subsection}{-.5em}{.5em}{}{}
\newtheorem{theo}{{Theorem}}[section]
\newtheorem{coro}[theo]{{Corollary}}
\newtheorem{lemma}[theo]{{Lemma}}
\newtheorem{prop}[theo]{Proposition}
\theoremstyle{definition}
\newtheorem{remark}[theo]{\textbf{Remark}}
\newtheorem{defn}[theo]{Definition}
\newtheorem{example}[theo]{Example}
\numberwithin{equation}{section}
\newcommand{\ra}{\rightarrow}
\newcommand{\ol}{\overline}
\newcommand{\cI}{\mathcal{I}}
\newcommand{\cS}{\mathcal{S}}
\newcommand{\cR}{\mathcal{R}}
\newcommand{\rR}{\mathrm{R}}
\newcommand{\rI}{\mathrm{I}}
\newcommand{\id}{\mathrm{id}}
\newcommand{\Gal}{\mathrm{Gal}}
\newcommand{\gm}{\mathbb{G}}
\newcommand{\rat}{\mathbb{Q}}
\newcommand{\ent}{\mathbb{Z}}
  \newcommand{\textcyr}[1]{%
    {\fontencoding{OT2}\fontfamily{wncyr}\fontseries{m}\fontshape{n}%
     \selectfont #1}}
\newcommand{\sha}{{\mbox{\textcyr{Sh}}}}
\begin{document}
\tolerance 400 \pretolerance 200 \selectlanguage{english}

\title{The Tate-Shafarevich groups of multinorm-one tori}
\author{T.-Y. Lee}
\date{\today}
\maketitle

\begin{abstract}
Let $k$ be a global field and $L$ be a product of cyclic extensions of $k$.
Let $T$ be the torus defined by the multinorm equation $N_{L/k}(x)=1$ and let $\hat{T}$ be its character group.
The Tate-Shafarevich group and the algebraic Tate-Shafarevich group of $\hat{T}$ in degree 2 give
obstructions to the Hasse principle and weak approximation for rational points on principal homogeneous spaces of $T$.
We give concrete descriptions of these groups and provide several examples.
\end{abstract}

\section{Introduction}
Let $k$ be a global field and  fix a separable closure $k_s$ of $k$.
In the following all the separable extensions of $k$ are considered as subfields of $k_s$.

Let $K_i$ be a finite separable extension of $k$ for $i=0,...,m$.
Set $L=K_0\times...\times K_m$.
Let $T_{L/k}$ be the torus defined by the multinorm equation:
\begin{equation}\label{e}
N_{L/k}(t)=1.
\end{equation}
Denote by $\hat{T}_{L/k}$ the character group of ${T}_{L/k}$.

Let $\Omega_k$ be the set of all places of $k$.
Define
\begin{equation}
\sha^i(k,T_{L/k}):=\ker (H^i(k,T_{L/k})\to \underset{v\in\Omega_k}{\prod}H^i(k_v,T_{L/k})).
\end{equation}

It is well-known that the elements in $\sha^1(k,T_{L/k})$ are in one-to-one correspondence with the isomorphism classes of $T_{L/k}$-torsors which have $k_v$-points for all $v\in\Omega_k$.
To be precise, let $X_c$ be the variety defined by
\begin{equation}\label{e0}
N_{L/k}(t)=c,
\end{equation}
where $c\in k^{\times}$.
Suppose that $X_c$ has a $k_v$-point for all $v\in\Omega_k$. Then
$X_c$ corresponds to an element $[X_c]\in\sha^1(k,T_{L/k})$.
By Poitou-Tate duality, the class $[X_c]$ defines a map $\sha^2(k,\hat{T}_{L/k})\to\rat/\ent$,
which is the Brauer-Manin obstrution to the Hasse principle for the existence of rational points of $X_c$.
Hence the group $\sha^2(k,\hat{T}_{L/k})$ is related to the local-global principle for multinorm equations.

For a  Galois module $M$ over $k$, define
$$\sha^i_{\omega}(k,M):=\{[C]\in H^i(k,M)\ \mbox{such that } [C]_v=0\ \mbox{for almost all $v\in\Omega_k$}.\}$$
It is clear that $\sha^i(k,\hat{T}_{L/k})\subseteq\sha^i_{\omega}(k,\hat{T}_{L/k})$.
The case $i=2$ is the most interesting to us.
In fact
if $\sha^2_{\omega}(k,\hat{T}_{L/k})=\sha^2(k,\hat{T}_{L/k})$,
weak approximation holds for $T_{L/k}$ and hence for those $X_c$ with a $k$-point (\cite{San} Prop. 8.9 and Thm. 8.12).

The local-global principle and weak approximation for multinorm equations (\ref{e0}) have been extensively studied.
One can see \cite{PoR}, \cite{Po}, \cite{DW}, \cite{BLP18} and \cite{M} for  recent developments on this topic.
In this paper, we are interested in the groups $\sha^2(k,\hat{T}_{L/k})$ and $\sha^2_\omega(k,\hat{T}_{L/k})$
(and hence the group $\sha^2_\omega(k,\hat{T}_{L/k})/\sha^2(k,\hat{T}_{L/k})$).
These groups measure the  obstruction to the local-global principle for existence of rational points of $X_c$ and the obstruction to weak approximation.

Under the assumption that $L$ is a product of (not necessarily disjoint) cyclic extensions of \emph{prime-power degrees},
We give a formula for $\sha^2_\omega(k,\hat{T}_{L/k})$ and $\sha^2(k,\hat{T}_{L/k})$.
Briefly speaking, the group $\sha^2_\omega(k,\hat{T}_{L/k})$ is determined by the "maximal bicyclic field" $M$ generated by subfields of $K_i$ and $\sha^2(k,\hat{T}_{L/k})$ is determined by the "maximal bicyclic and locally cyclic subfield" of $M$. In combination with \cite{BLP18} Proposition 8.6, one can calculate the group
$\sha^2(k,\hat{T}_{L/k})$ for $L$ a product of cyclic extensions of arbitrary degrees.
This generalizes the result in \cite{BLP18} \S 8.
Furthermore we compute the bigger group $\sha^2_\omega(k,\hat{T}_{L/k})$ which is related to weak approximation.
We give several concrete examples in the final section.

The paper is structured as follows.
Section 1 introduces the notation.
In Section 2 we give a combinatorial description of $\sha^2(k,\hat{T}_{L/k})$ and $\sha^2_\omega(k,\hat{T}_{L/k})$.
In Section 3, we prove some preliminaries about cyclic extensions, which will be the main tools in the following sections. In Section 4-6, we define the \emph{patching degree} and the \emph{degree of freedom} in order to describe the generators of the group $\sha^2(k,\hat{T}_{L/k})$ (resp. $\sha^2_\omega(k,\hat{T}_{L/k})$).
We give  formulas for $\sha^2(k,\hat{T}_{L/k})$ and $\sha^2_\omega(k,\hat{T}_{L/k})$ in Section 7 and provide several examples in the last section.

\section{Notation and definitions}

For a $k$-algebra A and a place $v\in\Omega_k$, we denote $A\otimes_k k_v$ by $A^v$.

A finite Galois extension $F$ of $k$ is said to be \emph{locally cyclic} at $v$ if $F\otimes_k k_v$ is a product of cyclic extensions of $k_v$. $F$ is said to be locally cyclic if it is locally cyclic at all $v\in\Omega_k$.

A bicyclic extension $F/k$ is a Galois extension with $\Gal(F/k)$ isomorphic to $\ent/n_1\ent\times \ent/n_2\ent$ where $n_1$, $n_2>1$ and $n_2|n_1$.

Throughout this paper, we assume $\underset{i=0}{\overset{m}{\cap}} K_i=k$.

\section{Preliminaries on algebraic tori}

For a $k$-torus $T$, we denote by $\hat{T}$ its character group  as a $\Gal(k_s/k)$-module.

Let $A$ be a field and $A'$ be a finite dimensional $A$-algebra.
For an $A'$-torus $T$, we denote by $\rR_{A'/A}(T)$  its Weil restriction to $A$.
(For more details on Weil restriction, see \cite{CGR} A.5.)

Let $N_{A'/A}$ be the norm map and denote by $T_{A'/A}$ the norm one torus $R^{(1)}_{A'/A}(\gm_m)$.

We first prove some general facts about multinorm-one tori defined by finite separable extensions of $k$.

We recall the following well-known fact (\cite{San} Lemma 1.9).
\begin{lemma}\label{weak approximation for permutation}
Let $\mathcal{G}=\Gal(k_s/k)$ and $M$ be a permutation module of $\mathcal{G}$. Then $\sha_\omega^2(k,M)=0$.
\end{lemma}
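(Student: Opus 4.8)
The plan is to reduce the vanishing of $\sha^2_\omega(k,M)$ for a permutation module to the case of an induced module $M = \ent[\mathcal{G}/\mathcal{H}]$ for an open subgroup $\mathcal{H}$, and then apply Shapiro's lemma together with Chebotarev. First I would recall that a permutation module is a finite direct sum of modules of the form $\ent[\mathcal{G}/\mathcal{H}_j]$, where each $\mathcal{H}_j = \Gal(k_s/E_j)$ for a finite separable extension $E_j/k$; since cohomology and the maps to local cohomology commute with finite direct sums, and $\sha^2_\omega$ is visibly additive, it suffices to treat a single $M = \ent[\mathcal{G}/\mathcal{H}]$ with $\mathcal{H} = \Gal(k_s/E)$.

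Next I would invoke Shapiro's lemma: $H^2(k, \ent[\mathcal{G}/\mathcal{H}]) \cong H^2(E, \ent)$, and this isomorphism is compatible with restriction to completions in the sense that the place-$v$ components on the $k$-side correspond to the components at the places $w$ of $E$ lying over $v$. Concretely, $H^2(k_v, \ent[\mathcal{G}/\mathcal{H}]) \cong \prod_{w \mid v} H^2(E_w, \ent)$. Now $H^2(E,\ent) \cong H^3(E,\ent) $ is torsion; more usefully, using $\ent \to \rat \to \rat/\ent$ one has $H^2(E,\ent) \cong H^1(E,\rat/\ent) = \mathrm{Hom}_{\mathrm{cont}}(\Gal(E_s/E), \rat/\ent)$, the group of continuous characters, and similarly over each $E_w$. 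So an element of $\sha^2_\omega(k,M)$ corresponds to a character $\chi$ of $\Gal(E_s/E)$ whose restriction to the decomposition group at $w$ is trivial for almost all places $w$ of $E$.

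Then I would finish with Chebotarev: a continuous character $\chi$ of finite order cuts out a finite cyclic extension $F/E$, and $\chi$ restricted to the decomposition group at $w$ is trivial precisely when $w$ splits completely in $F$. If this holds for almost all $w$, then by the Chebotarev density theorem (or already by the fact that a nontrivial extension has a positive density of non-split primes) we must have $F = E$, i.e. $\chi = 0$. Hence $\sha^2_\omega(E/k,\ent[\mathcal{G}/\mathcal{H}]) = 0$, and assembling the direct summands gives $\sha^2_\omega(k,M) = 0$.

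The main obstacle is bookkeeping rather than mathematics: one must check carefully that the Shapiro isomorphism is genuinely compatible with the localization maps, so that "$[C]_v = 0$ for almost all $v$ of $k$" translates exactly into "$\chi_w = 0$ for almost all $w$ of $E$" — this uses that each place $v$ of $k$ has only finitely many places of $E$ above it, and that the finitely many exceptional $v$ contribute only finitely many exceptional $w$. Granting that, the argument is immediate; I expect the author simply cites \cite{San} Lemma 1.9 and leaves these standard compatibilities implicit.
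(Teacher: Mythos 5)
Your proof is correct and is exactly the standard argument behind the result: the paper itself gives no proof, citing Sansuc's Lemma 1.9 verbatim (as you anticipated), and that lemma is proved by precisely your reduction to $\ent[\mathcal{G}/\mathcal{H}]$, Shapiro's lemma compatibly with localization, the identification $H^2(E,\ent)\simeq H^1(E,\rat/\ent)=\mathrm{Hom}_{\mathrm{cont}}(\Gal(E_s/E),\rat/\ent)$, and Chebotarev. The only blemish is the throwaway clause ``$H^2(E,\ent)\cong H^3(E,\ent)$ is torsion,'' which is garbled (presumably you meant $H^2(E,\ent)\cong H^1(E,\rat/\ent)$); it is not used and does not affect the argument.
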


\medskip

\medskip

Recall some notation defined in \cite{BLP18}.
Denote the index set by $\cI=\{1,...,m\}$ and $\cI'=\{0\}\cup\cI$.
In the following, we always assume that $m\geq 2$.

Set \begin{itemize}
 \item[$\bullet$] $K'=\underset{i\in\cI}{\prod} K_i$,
 \item[$\bullet$] $L=\underset{i\in\cI'}{\prod}K_i$,
 \item[$\bullet$]  $E = K_0 \otimes_k K'$, and
 \item[$\bullet$] $E_i = K_0 \otimes_k K_i$.
\end{itemize}

\medskip
The norm maps $N_{K_0/k} : K_0 \to k$ and $N_{K'/k} : K' \to k$ induce
$N_{E/K'} : E \to K'$ and $N_{E/K_0} : E \to K_0$.
Let
$\phi:\rR_{E/k}(\gm_m)\ra \rR_{L/k}(\gm_m)$ be defined by $\phi(x) = (N_{E/K_0}(x)^{-1},N_{E/K'}(x))$.
It is clear that
the image of $\phi$ is contained in $T_{L/k}$. Moreover, $\phi$ is surjective onto $T_{L/k}$ as a map of algebraic groups (easily checked after base change
to the separable closure $k_s$ of $k$).

\medskip
Consider the torus $S_{K_0,K'}$ defined by the exact sequence
\begin{equation}\label{e2}
\xymatrix@C=0.5cm{
  1 \ar[r] & S_{K_0,K'} \ar[r] & \rR_{E/k}(\gm_m) \xrightarrow{\phi} T_{L/k} \ar[r] & 1 }.
\end{equation}
Note that $S_{K_0,K'}$ also fits in the exact sequence

\begin{equation}\label{e3}
\xymatrix@C=0.5cm{
  1 \ar[r] &  S_{K_0,K'} \ar[r] & \underset{i\in I}{\prod}\rR_{K_i/k}(T_{E_i/K_i}) \xrightarrow{N_{E/K_0}} T_{K_0/k}  \ar[r] & 1 }.
\end{equation}


\begin{prop}\label{cyclicnorm}
Let $K_0$ be a cyclic extension of arbitrary degree. Then
$\sha_\omega^2(k, \hat{T}_{K_0/k})=0$.

\end{prop}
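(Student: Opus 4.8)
The plan is to compute $\sha^2_\omega(k,\hat{T}_{K_0/k})$ directly from the standard resolution of the character module of a norm-one torus. Write $G=\Gal(K_0/k)\cong\ent/n\ent$ and recall that $T_{K_0/k}$ sits in the exact sequence $1\to T_{K_0/k}\to \rR_{K_0/k}(\gm_m)\to\gm_m\to1$, which dualizes to a short exact sequence of Galois modules
\begin{equation*}
0\to\ent\to\ent[G]\to\hat{T}_{K_0/k}\to0,
\end{equation*}
where $\ent$ carries the trivial action and $\ent[G]$ is the permutation module. Taking the long exact sequence in Galois cohomology over $k$ gives
\begin{equation*}
H^2(k,\ent[G])\to H^2(k,\hat{T}_{K_0/k})\to H^3(k,\ent)\to H^3(k,\ent[G]),
\end{equation*}
and the analogous sequence over each $k_v$. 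Since $\ent[G]$ is a permutation module, Lemma \ref{weak approximation for permutation} gives $\sha^2_\omega(k,\ent[G])=0$, so I would aim to deduce $\sha^2_\omega(k,\hat{T}_{K_0/k})=0$ by chasing $\sha_\omega$ through the above sequence, the key point being that $\sha^i_\omega$ is exact enough: if $a\in\sha^2_\omega(k,\hat{T}_{K_0/k})$, its image in $H^3(k,\ent)$ lies in $\sha^3_\omega(k,\ent)$, and I would show this forces $a$ to come from $\sha^2_\omega(k,\ent[G])=0$ once I understand $\sha^3_\omega(k,\ent)$ and the connecting map.

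The main technical input is therefore the identification of $\sha^3_\omega(k,\ent)\cong H^3(k,\ent)_{\mathrm{loc.triv.}}$ together with the behaviour of the map $H^3(k,\ent)\to H^3(k,\ent[G])$, equivalently (by Shapiro) $H^3(k,\ent)\to H^3(K_0,\ent)$, which is just restriction. For number fields $H^3(k,\ent)=H^3(k,\ent)$ is $2$-torsion coming from the real places (it is $(\ent/2)^{r}$-ish), and the $\sha^3$ version is controlled by Poitou–Tate: $\sha^3(k,\ent)$ is dual to $\sha^0(k,\rat/\ent)$, which vanishes, while for $\sha^3_\omega$ one uses that local triviality at almost all places already pins down a class in this degree. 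Concretely, I would invoke Poitou–Tate duality to translate the vanishing of $\sha^2_\omega(k,\hat{T}_{K_0/k})$ into the vanishing of $\sha^1_\omega$ (or $\sha^1$) of the dual torus, or more directly cite that for $\ent$ with trivial action $\sha^3_\omega(k,\ent)=0$ because a degree-$3$ class trivial at almost all places is trivial at all places (the local invariants in degree $3$ vanish except at real places, and a global class is detected by those finitely many local conditions, hence "almost all" already means "all").

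Assembling: from $\sha^2_\omega(k,\ent[G])=0$ and $\sha^3_\omega(k,\ent)=0$ and the four-term exactness of the $\sha_\omega$-sequence (which holds because $\sha_\omega$ is a sub-functor stable under the relevant maps and the original sequence is exact), one concludes $\sha^2_\omega(k,\hat{T}_{K_0/k})=0$. The step I expect to be the real obstacle is the exactness of $\sha_\omega$ in the middle: $\sha^i_\omega$ need not be exact for a general short exact sequence, so I would need to argue that an element of $\sha^2_\omega(k,\hat{T}_{K_0/k})$ lifts to a class in $H^2(k,\ent[G])$ that is still locally trivial at almost all places — this requires that the obstruction to lifting, living in $\sha^3_\omega(k,\ent)$, vanishes, which is exactly the computation above, and that the lift can be chosen in $\sha^2_\omega$, using that the fibre of possible lifts is a torsor under $H^2(k,\ent)/\!\img$ whose local conditions can be matched up. Once $\sha^3_\omega(k,\ent)=0$ is established, the rest is a formal diagram chase.
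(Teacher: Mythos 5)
There is a genuine gap, and it sits exactly where you flagged it: the middle-exactness of $\sha_\omega$ for your chosen resolution. The sequence $0\to\ent\to\ent[G]\to\hat{T}_{K_0/k}\to 0$ dual to the norm map exists for \emph{every} finite separable extension $K_0/k$ --- nowhere in your outline is cyclicity actually used. Since $\sha^2(k,\hat{T}_{K_0/k})\subseteq\sha^2_\omega(k,\hat{T}_{K_0/k})$ is nonzero for, say, $K_0=\rat(\sqrt{13},\sqrt{17})$ (the classical failure of the Hasse norm principle, dualized via Poitou--Tate), no diagram chase on that sequence alone can prove the proposition; the lifting step must fail. Concretely: granting $H^3(k,\ent)=0$, you can lift $a\in\sha^2_\omega(k,\hat{T}_{K_0/k})$ to some $b\in H^2(k,\ent[G])$, and for almost all $v$ you know $b_v$ lies in the image of $H^2(k_v,\ent)$; but to conclude you would need one global class $c\in H^2(k,\ent)=\mathrm{Hom}(\Gal(k_s/k),\rat/\ent)$ whose localizations agree with the local lifts at almost all $v$, and such a $c$ need not exist --- matching almost-everywhere-local data by a global character is precisely the obstruction that makes these Tate--Shafarevich groups nontrivial in general. (A smaller inaccuracy: $H^3(k_v,\ent)=H^1(\ent/2\ent,\ent)=0$ at real places, so $H^3(k,\ent)$ vanishes outright rather than being $(\ent/2\ent)^{r}$; that part of your argument survives, but for a different reason than the one you give.)

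The paper sidesteps the lifting problem by using cyclicity at the outset: choosing a generator $\sigma$ of $\Gal(K_0/k)$, the surjection $\rR_{K_0/k}(\gm_m)\to T_{K_0/k}$, $x\mapsto x/\sigma(x)$, has kernel $\gm_m$, and dualizing gives $0\to\hat{T}_{K_0/k}\to\rI_{K_0/k}(\ent)\to\ent\to 0$, which exhibits $\hat{T}_{K_0/k}$ as a \emph{submodule} of the permutation module (for cyclic $G$ the quotient $\ent[G]/(N_G)$ is isomorphic to the augmentation ideal via multiplication by $\sigma-1$; this is exactly where cyclicity enters). The connecting term now sits in degree one: $H^1(k,\ent)=\mathrm{Hom}_{\mathrm{cont}}(\Gal(k_s/k),\ent)=0$, so $H^2(k,\hat{T}_{K_0/k})\to H^2(k,\rI_{K_0/k}(\ent))$ is injective, likewise over every $k_v$, and therefore $\sha^2_\omega(k,\hat{T}_{K_0/k})$ injects into $\sha^2_\omega(k,\rI_{K_0/k}(\ent))=0$ by Lemma \ref{weak approximation for permutation}. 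To repair your proof you should replace your resolution by this one; the two presentations are not interchangeable, and only the second makes the relevant map injective with no lifting required.
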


\begin{proof}

Let $\sigma$ be a generator of ${\rm Gal}(K_0/k)$.
Consider the exact sequence
$$1 \ra \gm_m \ra \rR_{K_0/k}(\gm_m) \ra  T_{K_0/k}  \ra 1,$$
where the map from  $\rR_{K_0/k}(\gm_m)$ to $ T_{K_0/k}$ sends $x$ to $x/\sigma(x)$.
Its dual sequence  is
$$1 \ra \hat{T}_{K_0/K} \ra \rI_{K_0/k}(\ent) \ra  \ent  \ra 1.$$

By Lemma \ref{weak approximation for permutation} we have $\sha^2_\omega(k,\rI_{K_0/k}(\ent))=0$.
As $H^1(k,\ent)$=0,
we have $\sha^2_\omega(k,\hat{T}_{K_0/k}) = 0$.
\end{proof}

\begin{lemma}\label{T and S}
We have
\begin{enumerate}
  \item $\sha^{2}(k,\hat{T}_{L/k})\simeq\sha^{1}(k,\hat{S}_{K_0,K'})$.
  \item $\sha^{2}_\omega(k,\hat{T}_{L/k})\simeq\sha^{1}_\omega(k,\hat{S}_{K_0,K'})$.
\end{enumerate}

\end{lemma}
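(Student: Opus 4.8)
The plan is to extract both isomorphisms from the two exact sequences (\ref{e2}) and (\ref{e3}) by dualizing and running the associated long exact sequences in Galois cohomology, carefully tracking the $\sha^i$ and $\sha^i_\omega$ functors. First I would dualize (\ref{e2}) to obtain a short exact sequence of Galois modules
\begin{equation*}
0 \ra \hat{T}_{L/k} \ra \hat{\rR}_{E/k}(\gm_m) \ra \hat{S}_{K_0,K'} \ra 0,
\end{equation*}
where $\hat{\rR}_{E/k}(\gm_m)$ is the permutation module $\ent[\Gal(k_s/k)/\Gal(k_s/E)]$ (and more precisely the induced module $\rI_{E/k}(\ent)$). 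The key input is Lemma \ref{weak approximation for permutation}, which gives $\sha^2_\omega(k,\rI_{E/k}(\ent))=0$, and hence also $\sha^2(k,\rI_{E/k}(\ent))=0$ since $\sha^2\subseteq\sha^2_\omega$. I also need $\sha^1$ of this permutation module to vanish; this follows because for an induced module $\rI_{E/k}(\ent)$ one has $H^1(k_v,\rI_{E/k}(\ent))=0$ for all $v$ by Shapiro's lemma together with $H^1$ of the absolute Galois group of a local field with $\ent$-coefficients being trivial (torsion-free coefficients, connected... more precisely $H^1(\Gamma,\ent)=\mathrm{Hom}(\Gamma,\ent)=0$ for profinite $\Gamma$), so $\sha^1(k,\rI_{E/k}(\ent))\hookrightarrow H^1(k,\rI_{E/k}(\ent))$ maps to something, but in fact I should argue $\sha^1(k,\rI_{E/k}(\ent))=0$ directly from $\sha^1(E,\ent)=H^1(E,\ent)=\mathrm{Hom}(\Gal(k_s/E),\ent)=0$ via Shapiro. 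Given these vanishings, the long exact sequence
\begin{equation*}
\cdots \ra H^1(k,\rI_{E/k}(\ent)) \ra H^1(k,\hat{S}_{K_0,K'}) \ra H^2(k,\hat{T}_{L/k}) \ra H^2(k,\rI_{E/k}(\ent)) \ra \cdots
\end{equation*}
and the corresponding diagram of local cohomology groups yield, after taking kernels into the product of local terms, the isomorphism $\sha^2(k,\hat{T}_{L/k})\simeq\sha^1(k,\hat{S}_{K_0,K'})$ for part (1), and the same argument with $\sha_\omega$ in place of $\sha$ gives part (2).

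The mild technical point — and the step I expect to need the most care — is the diagram chase that passes from the long exact sequence of the full cohomology groups to the statement about the $\sha$-subgroups. Concretely, one considers the commutative ladder whose top row is the long exact sequence over $k$ and whose bottom row is the product over all $v$ (resp. the restricted product in the $\omega$ case) of the local long exact sequences, with vertical localization maps. Since $\sha^1(k,\rI_{E/k}(\ent))=0$ and $\sha^2(k,\rI_{E/k}(\ent))=0$ (and likewise for $\sha_\omega$), a short diagram chase shows the connecting-type map $H^1(k,\hat{S}_{K_0,K'})\to H^2(k,\hat{T}_{L/k})$ restricts to an isomorphism on $\sha$-subgroups: surjectivity uses vanishing of $\sha^2$ of the permutation module (every locally trivial class in $H^2(k,\hat{T}_{L/k})$ lifts, and the lift is again locally trivial modulo the permutation term, which contributes nothing), and injectivity uses vanishing of $\sha^1$ of the permutation module. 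One must check that "locally trivial" behaves correctly, i.e. that the localization maps commute with the connecting homomorphisms — this is automatic from functoriality of the long exact sequence in the coefficient module — and in the $\omega$ case that "trivial at almost all $v$" is likewise preserved, which again is formal.

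Finally, I note that the alternative route via the exact sequence (\ref{e3}) is not needed for this lemma, but one could equally well combine (\ref{e3}) with Proposition \ref{cyclicnorm} when $K_0$ is cyclic; for the present general statement the sequence (\ref{e2}) suffices since $\rR_{E/k}(\gm_m)$ is literally a quasi-trivial torus, so its character module is a permutation module and Lemma \ref{weak approximation for permutation} applies directly without any cyclicity hypothesis on the $K_i$. In summary: dualize (\ref{e2}); invoke vanishing of $\sha^1,\sha^2,\sha^1_\omega,\sha^2_\omega$ for the permutation module $\rI_{E/k}(\ent)$; and conclude by the long-exact-sequence diagram chase. The only real obstacle is bookkeeping in that chase, and it is routine.
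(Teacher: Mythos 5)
Your proposal is correct and follows essentially the same route as the paper: dualize the sequence (\ref{e2}), use Lemma \ref{weak approximation for permutation} to kill $\sha^2_\omega$ (hence $\sha^2$) of the permutation module $\rI_{E/k}(\ent)$, and use the local vanishing $H^1(k_v,\rI_{E/k}(\ent))=0$ to transfer local triviality across the connecting map $\delta\colon H^1(k,\hat{S}_{K_0,K'})\to H^2(k,\hat{T}_{L/k})$. The only cosmetic difference is that the paper quotes part (1) from \cite{BLP18} and writes out the chase only for the $\omega$-version, whereas you carry out both cases by the same argument.
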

\begin{proof}
The first statement is \cite{BLP18} Lemma 3.1.

We now prove (2).
Consider the dual sequence of (\ref{e2}):
\begin{equation}\label{e4}
\xymatrix@C=0.5cm{
  0 \ar[r] & \hat{T}_{L/k} \ar[r] & \rI_{E/k}(\ent) \xrightarrow{\phi} \hat{S}_{K_0,K'} \ar[r] & 0 }.
\end{equation}
The exact sequence \ref{e4} gives rise to the following exact sequence:
\begin{equation}\label{e5}
\xymatrix@C=0.5cm{
0 \ar[r] & H^1(k, \hat{S}_{K_0,K'}) \xrightarrow{\delta}  H^2(k,\hat{T}_{L/k})\xrightarrow{} H^2(k,\rI_{E/k}(\ent))}.
\end{equation}
By Lemma \ref{weak approximation for permutation}, we have $\sha_\omega^2(k,\rI_{E/k}(\ent))=0$.
Therefore $\sha_\omega^2(k,\hat{T}_{L/k})$ is in the image of $\delta$.
Let $[\theta]$ be an element in $H^1(k,\hat{S}_{K_0,K'})$ such that $\delta[\theta]\in\sha_\omega^2(k,\hat{T}_{L/k})$.
As $H^1(k_v,\rI_{E/k}(\ent))=0$ for all $v\in\Omega_k$ , the element $[\theta]_v=0$ if $(\delta[\theta])_v=0$.
Hence $[\theta]\in\sha_\omega^1(k,\hat{S}_{K_0,K'})$.
The lemma then follows.
\end{proof}

\subsection{Combinatorial description of Tate-Shafarevich groups}
From now on we assume that $K_0$ is a cyclic extension of degree $p^{\epsilon_0}$ and
we denote by $K_0({f})$ the unique subfield of $K_0$ of degree $p^f$.

For all $i\in \cI$, we set
\begin{itemize}
  \item[$\bullet$] $p^{e_{0,i}}=[K_0\cap K_i:k]$, and
  \item[$\bullet$] $e_i=\epsilon_0-e_{0,i}$.
\end{itemize}

As $K_0$ is cyclic, for each $i\in I$, the algebra $K_0\otimes_k K_i$ is a product of cyclic extensions of degree $p^{e_i}$ of $K_i$.
Without loss of generality, we assume that $e_i\geq e_{i+1}$.
Since we assume that $K_0\cap(\underset{i}{\cap} K_i)=k$, we have $e_{0,1}=0$ and $e_1=\epsilon_0$.

We can assume further that for any $i\neq j$, $K_j\nsubseteq K_i$.
To see this, suppose that there are distinct $i$, $j$ such that $K_j\subseteq K_i$.
Set $J=\{0,1,...,m\}\setminus \{i\}$ and set $L'=\underset{i\in J}{\prod} K_i$.
Then $T_{L/k}\simeq T_{L'/k}\times R_{K_i/k}(\gm_m)$.
By Lemma \ref{weak approximation for permutation}, $\sha^2(k,\hat{T}_{L/k})\simeq\sha^2(k,\hat{T}_{L'/k})$ and $\sha^2_\omega(k,\hat{T}_{L/k})\simeq\sha^2_\omega(k,\hat{T}_{L'/k})$.

Recall some definitions from \cite{BLP18}.
Let $s$ and $t$ be  positive integers. For $s\geq t$, let $\pi_{s,t}$ be the canonical projection  $\ent/p^{s}\ent \to
\ent/p^{t}\ent$.
For $x\in\ent/p^{s}\ent$ and $y\in\ent/p^{t}\ent$, we say that \emph{$x$ dominates $y$} if $s\geq t$ and $\pi_{s,t}(x)=y$; if this is the case, we write
 $x\succeq y$.
For $x\in\ent/p^{s}\ent$ and $y\in\ent/p^{t}\ent$, let  $\delta(x,y)$  be the greatest nonnegative integer $d\leq \min\{s,t\}$
such that $\pi_{s,d}(x)=\pi_{t,d}(y)$.
We have  $\delta(x,y)=\min\{s,t\}$ if and only if $x\succeq y$ or $y\succeq x$.

Recall that $e_i\geq e_{i+1}$ for $i=1,...,m-1$.
For $a=(a_1,...,a_m)\in\underset{i \in \cI}{\oplus}\ent/p^{e_i}\ent$ and $n\in \ent/p^{e_1}\ent$,
let $I_n(a)$ be the set
$\{i\in\cI|\ n\succeq a_i\}$ and let $I(a)=(I_0(a),...,I_{p^{e_1}-1}(a))$.

\medskip




Given a positive integer $0\leq d\leq \epsilon_0$ and $i \in \cI$, let $\Sigma_i^d$ be the set of all places $v\in\Omega_k$ such that at each place $w$ of $K_i$ above $v$, the following equivalent conditions hold
(See \cite{BLP18} Prop. 5.5 and 5.6.):
\begin{itemize}
  \item[(1)] The algebra $K_0 \otimes_k K_i^w$ is isomorphic to a product of
  isomorphic field extensions of degree at most $p^d$ of $K^w_i$.
  \item[(2)] $K_0(\epsilon_0-d)\otimes_k K_i^w$ is isomorphic to
   a product of $K^w_i$.
\end{itemize}

\medskip

Let $\Sigma_i=\Sigma^0_i$. In other words,
$\Sigma_i$ is the set of all places $v \in \Omega_k$ where $K_0 \otimes K_i^v$ is isomorphic to a product of copies of $K_i^v$.

\medskip
Let $a=(a_1,...,a_m)$ be an element in $\underset{i \in \cI}{\oplus}\ent/p^{e_i}\ent$ and  $I(a)=(I_0,...,I_{p^{e_1}-1})$.
For $I_n\subsetneqq \cI$, define
\begin{equation}\label{e24}
\Omega(I_n)=\underset{i\notin I_n}{\cap}{\Sigma}_i^{\delta(n,a_i)}.
\end{equation}
For $I_n=\cI$, we set $\Omega(I_n)=\Omega_k$.

\medskip
Set \[G = G(K_0,K') =\{(a_1,...,a_m)\in\underset{i \in \cI}{\oplus}\ent/p^{e_i}\ent |\ \underset{n\in\ent/p^{e_1}\ent}{\bigcup} \Omega (I_n(a))=\Omega_k\},
\]
and set $D$ to be the diagonal subgroup generated by $(1,1,...,1)$.

Define $\sha(K_0,K')$ as $G(K_0,K')/D$.
\begin{theo}\label{isom between G and sha}(\cite{BLP18} Cor. 5.4) The Tate-Shafarevich group
$\sha^2(k,\hat{T}_{L/k})$ is isomorphic to $\sha(K_0,K')$.
\end{theo}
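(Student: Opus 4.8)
The plan is to reduce everything to a concrete description of $\sha^1_\omega$ and $\sha^1$ of the character module $\hat S_{K_0,K'}$, using the exact sequence (\ref{e3}), and then to translate the cohomological computation into the combinatorial condition defining $G(K_0,K')$. First I would observe that Theorem~\ref{isom between G and sha} is the ``non-$\omega$'' version, which by hypothesis is already proved in \cite{BLP18} Cor. 5.4; so strictly the work here is to set up the framework that the later sections (patching degree, degree of freedom) will build on, and to record the combinatorial avatar $\sha(K_0,K')$. Concretely: dualize (\ref{e3}) to get an exact sequence of Galois lattices
\begin{equation*}
0 \longrightarrow \hat T_{K_0/k} \longrightarrow \bigoplus_{i\in\cI}\rI_{K_i/k}\!\big(\hat T_{E_i/K_i}\big) \longrightarrow \hat S_{K_0,K'} \longrightarrow 0,
\end{equation*}
and analyze the induced long exact sequence in cohomology together with the local conditions. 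Since $\hat T_{E_i/K_i}$ for the cyclic extensions $E_i/K_i$ of $p$-power degree is, after the standard resolution $0\to\hat T_{E_i/K_i}\to \rI_{E_i/K_i}(\ent)\to\ent\to 0$, controlled by permutation modules, Lemma~\ref{weak approximation for permutation} and Proposition~\ref{cyclicnorm} kill the ambient $\sha^2_\omega$ terms, so $\sha^1_\omega(k,\hat S_{K_0,K'})$ injects into a coboundary group that one can make explicit.

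The second step is the identification of a class in $\sha^1(k,\hat S_{K_0,K'})$ (equivalently, via Lemma~\ref{T and S}, in $\sha^2(k,\hat T_{L/k})$) with a tuple $a=(a_1,\dots,a_m)\in\bigoplus_{i\in\cI}\ent/p^{e_i}\ent$: the component $a_i$ records, in $\Gal(E_i/K_i)\cong\ent/p^{e_i}\ent$, the ``twisting data'' of the cocycle along the $i$-th factor, with the diagonal $D=\langle(1,\dots,1)\rangle$ quotiented out precisely because it comes from the image of $\hat T_{K_0/k}$ (the norm relation ties the factors together). The third and crucial step is to show that the local triviality condition — that the class dies in $H^2(k_v,\hat T_{L/k})$ for \emph{all} $v$ — is equivalent to the covering condition $\bigcup_{n} \Omega(I_n(a))=\Omega_k$. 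Here one uses the local description of splitting: at a place $v$, whether the $i$-th factor contributes locally is governed by how much of $K_0$ splits in $K_i^v$, which is exactly what the sets $\Sigma_i^d$ encode via the equivalent conditions (1)–(2); the index $\delta(n,a_i)$ measures the ``common splitting depth'' between the global twist $a_i$ and a candidate local parameter $n$, and $I_n(a)=\{i : n\succeq a_i\}$ picks out the factors for which $n$ is a legitimate simultaneous choice. The statement $\Omega(I_n(a))=\bigcap_{i\notin I_n}\Sigma_i^{\delta(n,a_i)}$ is then the set of $v$ at which the particular local parameter $n$ certifies local triviality, and the union over $n$ being all of $\Omega_k$ says every place admits some such certificate.

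The main obstacle is the third step: matching the cohomological local condition at a place $v$ — which a priori involves the full local Galois cohomology of $\hat T_{L/k}$ and its behavior under restriction-corestriction for the decomposition group — with the clean combinatorial covering statement. This requires the careful bookkeeping of \cite{BLP18} Prop. 5.5 and 5.6 (the equivalence of (1) and (2)), and a Shapiro-lemma analysis of each induced summand $\rI_{K_i/k}(\hat T_{E_i/K_i})$ place by place, including handling places where $K_i$ itself is inert or ramified in a way that changes the local Galois group. I would expect to spend most of the effort verifying that ``some $n$ works at $v$'' in the combinatorial sense is not merely sufficient but also necessary for local triviality — i.e.\ that a class with $\bigcup_n\Omega(I_n(a))\neq\Omega_k$ genuinely has a nonzero local invariant at the missing place — which is where the sharpness of the $\Sigma_i^d$ characterization is indispensable. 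Modulo this, reducing to \cite{BLP18} and invoking Lemma~\ref{T and S} closes the argument.
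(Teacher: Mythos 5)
Your proposal matches the paper's proof, which is simply the combination of Lemma \ref{T and S} (reducing $\sha^2(k,\hat{T}_{L/k})$ to $\sha^1(k,\hat{S}_{K_0,K'})$) with the citation of \cite{BLP18} Thm.~5.3; your expanded sketch of the dualization of (\ref{e3}), the identification of classes with tuples modulo the diagonal $D$, and the matching of local triviality with the covering condition is exactly the content of that cited result (and is the same argument the paper itself rehearses for the $\omega$-version in Theorem \ref{sha_omega T0_primepower}). No gap.
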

\begin{proof}
This follows from Lemma \ref{T and S} and \cite{BLP18} Thm.  5.3.
\end{proof}

\medskip


Next we give a combinatorial description of $\sha_\omega^2(k,\hat{T}_{L/k})$, which is similar to the description of $\sha^2(k,\hat{T}_{L/k})$.

For $a=(a_1,...,a_m)\in\underset{i \in \cI}{\oplus}\ent/p^{e_i}\ent$, we define \[\mathcal{S}_a=
\Omega_k\backslash(\underset{n\in\ent/p^{e_1}\ent}{\bigcup} \Omega (I_n(a))).\]
Set
\[G_\omega = G_{\omega}(K_0,K') =\{(a_1,...,a_m)\in\underset{i \in \cI}{\oplus}\ent/p^{e_i}\ent |\ \cS_a\mbox{ is a finite set}\}.
\]
Clearly $G\subseteq G_{\omega}$.
Define $\sha_{\omega}(K_0,K')$ as $G_\omega(K_0,K')/D$, where $D$ is the subgroup generated by the diagonal element $(1,...,1)$.
We prove an analogue of Theorem \ref{isom between G and sha}.

\begin{theo}\label{sha_omega T0_primepower}
{\it Keep the notation  above. Then $\sha^2_\omega(k,\hat{T}_{L/k})\simeq \sha_\omega(K_0,K')$.}
\end{theo}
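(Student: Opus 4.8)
The plan is to reduce everything, via Lemma~\ref{T and S}(2), to the computation of $\sha^1_\omega(k,\hat{S}_{K_0,K'})$, and then to re-examine the proof of \cite{BLP18} Thm.~5.3 while keeping track of the \emph{set} of places at which the relevant classes are locally trivial, not merely whether that set is all of $\Omega_k$. Concretely, I would start from the dual of the exact sequence (\ref{e3}),
\[
0 \ra \hat{T}_{K_0/k} \ra \underset{i\in\cI}{\oplus}\ \rI_{K_i/k}(\hat{T}_{E_i/K_i}) \ra \hat{S}_{K_0,K'} \ra 0 .
\]
Since $K_0$ is cyclic and $E_i=K_0\otimes_k K_i$ is a product of cyclic extensions of $K_i$ of degree $p^{e_i}$, Shapiro's lemma gives $H^1(k,\rI_{K_i/k}(\hat{T}_{E_i/K_i}))\simeq H^1(K_i,\hat{T}_{E_i/K_i})\simeq\ent/p^{e_i}\ent$ and $H^1(k,\hat{T}_{K_0/k})\simeq\ent/p^{\epsilon_0}\ent$; as in the proof of \cite{BLP18} Thm.~5.3, the associated long exact sequence then produces an exact sequence
\[
0 \ra \Big(\underset{i\in\cI}{\oplus}\ \ent/p^{e_i}\ent\Big)\big/ D \overset{\,a\,\mapsto\,[\theta_a]\,}{\lra} H^1(k,\hat{S}_{K_0,K'}) \overset{\partial}{\lra} H^2(k,\hat{T}_{K_0/k}),
\]
in which the image of $H^1(k,\hat{T}_{K_0/k})$ is precisely the diagonal subgroup $D$, and in which one checks, place by place, that $[\theta_a]_v=0$ if and only if $v\in\underset{n\in\ent/p^{e_1}\ent}{\bigcup}\Omega(I_n(a))$.

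Granting these two facts, the theorem follows quickly. Because localization commutes with the connecting homomorphism, $\partial$ sends $\sha^1_\omega(k,\hat{S}_{K_0,K'})$ into $\sha^2_\omega(k,\hat{T}_{K_0/k})$, which vanishes by Proposition~\ref{cyclicnorm}; hence every element of $\sha^1_\omega(k,\hat{S}_{K_0,K'})$ is of the form $[\theta_a]$ for some $a\in\underset{i\in\cI}{\oplus}\ent/p^{e_i}\ent$, uniquely determined modulo $D$. On the other hand, by the local criterion above the set of places where $[\theta_a]$ is nontrivial equals $\cS_a=\Omega_k\setminus\underset{n}{\bigcup}\Omega(I_n(a))$, so $[\theta_a]\in\sha^1_\omega(k,\hat{S}_{K_0,K'})$ if and only if $\cS_a$ is finite, i.e.\ if and only if $a\in G_\omega(K_0,K')$. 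Since $D\subseteq G(K_0,K')\subseteq G_\omega(K_0,K')$, these classes form exactly the subgroup $G_\omega(K_0,K')/D=\sha_\omega(K_0,K')$ of $H^1(k,\hat{S}_{K_0,K'})$; combining with the previous sentence and with Lemma~\ref{T and S}(2) gives $\sha^2_\omega(k,\hat{T}_{L/k})\simeq\sha^1_\omega(k,\hat{S}_{K_0,K'})\simeq\sha_\omega(K_0,K')$.

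The bulk of the work is therefore not in the deduction above but in verifying that the two ingredients quoted from \cite{BLP18} Thm.~5.3 are available in exactly this local form. The first — the exact sequence and the identification of the diagonal — is essentially bookkeeping with the character module $\hat{S}_{K_0,K'}$ as in \cite{BLP18} \S5. The genuinely delicate point, and the one I expect to be the main obstacle, is the second: one must read off from the explicit cocycle representing $\theta_a$ (and from the local analysis in \cite{BLP18} Prop.~5.5 and 5.6) that its restriction to the decomposition group at $v$ vanishes precisely when $v\in\bigcup_n\Omega(I_n(a))$, i.e.\ that the criterion underlying Theorem~\ref{isom between G and sha} is in fact a place-by-place statement rather than only an aggregate one. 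Apart from that, the only essential use of the hypothesis that $K_0$ is cyclic is the vanishing $\sha^2_\omega(k,\hat{T}_{K_0/k})=0$ supplied by Proposition~\ref{cyclicnorm}, which is what guarantees that $\sha^1_\omega(k,\hat{S}_{K_0,K'})$ is accounted for entirely by the combinatorial parameters $a$.
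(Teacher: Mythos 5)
Your proposal is correct and follows essentially the same route as the paper's proof: reduce via Lemma~\ref{T and S}(2) to $\sha^1_\omega(k,\hat{S}_{K_0,K'})$, use the dual of (\ref{e3}) with the Shapiro-lemma identifications to realize $(\oplus_i\,\ent/p^{e_i}\ent)/D$ inside $H^1(k,\hat{S}_{K_0,K'})$, invoke Proposition~\ref{cyclicnorm} to see that $\sha^1_\omega$ lies in the image of this map, and then translate local triviality at $v$ into the condition $v\in\bigcup_n\Omega(I_n(a))$. The "delicate point" you flag (that the criterion is genuinely place-by-place) is exactly the content the paper carries over from the local analysis behind \cite{BLP18} Thm.~5.3, so nothing further is needed.
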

\begin{proof}
By Lemma \ref{T and S}, it is sufficient to show that $\sha^1_\omega(k,\hat S_{K_0,K'})\simeq \sha_\omega(K_0,K')$.
The proof is similar to the proof of \cite{BLP18} Theorem 5.3.
We sketch the proof here. For more details one can refer to \cite{BLP18}.

Consider the dual sequence  of (\ref{e3}),
\begin{equation}\label{e30}
\xymatrix{
  0 \ar[r] & \hat{T}_{K_0/k} \xrightarrow{\iota}  \rI_{K'/k}(\hat{T}_{E/K'}) \xrightarrow{\rho} \hat{S}_{K_0,K'} \ar[r] & 0
 },
\end{equation}
and the exact sequence induced by (\ref{e30}),
\begin{equation}\label{e20}
\xymatrix{
   H^1(k,  \hat{T}_{K_0/k}) \xrightarrow{\iota^1}   H^1(k, \rI_{K'/k}(\hat{T}_{E/K'})) \xrightarrow{\rho^1} H^1(k,  \hat{S}_{K_0,K'} ) \ra
   H^2(k, \hat{T}_{K_0/k})
 . }
\end{equation}


By \cite{BLP18} Lemma 1.2 and Lemma 1.3, we can identify $H^1(k,\hat T_{K_0/k})$ to  $\ent/p^{\epsilon_0}\ent$ and
$H^1(k,\rI_{K_i/k}(\hat{T}_{E_i/K_i}))$ to $\ent/p^{e_i}\ent$ for $1\leq i\leq m$.
Under this identification,  we can rewrite the exact sequence $(\ref{e20})$ as follows :
\begin{equation}\label{e22}
\xymatrix{
   \ent/p^{\epsilon_0}\ent \xrightarrow{\iota^1} \underset{i \in \cI}{{\oplus}} \ent/p^{e_i}\ent
   \xrightarrow{\rho^1} H^1(k,\hat{S}_{K_0,K'} ) \ra
   H^2(k, \hat{T}_{K_0/k}),
 }
\end{equation}
where $\iota^1$ is the natural projection from $\ent/p^{\epsilon_0}\ent$ to $\ent/p^{e_i}\ent$ for each $i$. Note that the image of $\iota^1$ is the subgroup $D$, and we
have the exact sequence

\begin{equation}\label{e21}
\xymatrix{
   0 \ra ( \underset{i \in \cI}{{\oplus}} \ent/p^{e_i}\ent )/D  \xrightarrow{\rho^1} H^1(k,\hat{S}_{K_0,K'} ) \ra
   H^2(k, \hat{T}_{K_0/k}).
 }
\end{equation}

By Proposition \ref{cyclicnorm} the group $ \sha_\omega^1(k,\hat{S}_{K_0,K'} )$ is contained in the image of $\rho^1$.
Let $a=(a_1,...,a_m)\in \underset{i \in \cI}{{\oplus}} \ent/p^{e_i}\ent$ and $[a]$ be its image in $(\underset{i \in \cI}{{\oplus}} \ent/p^{e_i}\ent)/D$. We claim that $\rho^1([a])$ is in $ \sha_\omega^1(k,\hat{S}_{K_0,K'} )$ if and only if $a\in G_\omega$.

For $v\in\Omega_k$, we denote by $a^v$ the image of $a$ in
$\underset{i=1}{\overset{m}{\oplus}}H^1(k_v,\rI_{K^v_i/k_v}(\hat{T}_{E^v_i/K^v_i}))$, and by $D_v$ the image of $D$ in this sum.

By the exact sequence  (\ref{e21}) over $k_v$, we have $\rho^1([a])\in  \sha^1_\omega(k,\hat{S}_{K_0,K'} )$ if and only if  $a^v \in D_v$ for almost all places $v \in \Omega_k$.

Note that $a^v=(n,...,n)^v$ if and only  if $v\in \Omega(I_n(a))$. Hence $a^v\in D_v$ if and only if $v \in \underset{n\in\ent/p^{e_1}\ent}{\bigcup} \Omega (I_n(a))$.

\if 0
Suppose that $a\in G_\omega$, and let $v \in \Omega_k\setminus\cS_a$.
We claim that $a^v \in D_v$.
As $v \in \Omega_k\setminus\cS_a$, there is $n\in \ent/p^{e_1}\ent$ such that $v\in \Omega(I_n(a))$. If $I_n(a)=\cI$, then clearly $a\in D\subseteq G_\omega$.
Suppose that $I_n(a)\neq \cI$. This implies that for
each $i\notin I_n(a)$ and for each place $w$ of $K_i$ above $v$, the \'etale algebra $K_i^w\otimes_k K_0$ is isomorphic to a product of field extensions of $K_i^w$ of degree at most $\delta(n,a_i)$.
Let $\delta_i=\delta(n,a_i)$. Note that \[
H^1(k_v,\rI_{K^v_i/k_v}(\hat{T}_{E^v_i/K^v_i})) = H^1(K^v_i,\hat{T}_{E^v_i/K^v_i}). \]
We have
\[
H^1(K^v_i,\hat{T}_{E^v_i/K^v_i}) \simeq\underset{w|v}{\oplus}H^1(K^w_i,\hat{T}_{K^w_i\otimes K/K^w_i})\simeq
\underset{w|v}{\oplus}\ent/p^{e_{i,w}}\ent,\]
where $e_{i,w}\leq\delta_i$, and the localization map  $H^1(K_i,\hat{T}_{E_i/K_i}) \to H^1(K^v_i,\hat{T}_{E^v_i/K^v_i})$
is the canonical projection $\pi_{e_i,e_{i,w}}$ from $\ent/p^{e_{i}}\ent$ to each component $\ent/p^{e_{i,w}}\ent$.
Since for all $i\notin I_n(a)$ we have $e_{i,w}\leq\delta_i$,  and $\pi_{e_i,\delta_i}(a_i)=\pi_{e_1,\delta_i}(n)$, this implies that  $a^v=(n,...,n)^v$. Since $\cS_a$ is finite, we have $a\in\sha^1_\omega(k,\hat{S}(K_0,K'))$.

Suppose conversely that $a^v \in D_v$ for all $v\in\Omega_k\setminus\cS$, where $\cS$ is a finite set.
We claim that $\underset{n\in \ent/p^{e_1}\ent}{\cup}\Omega(I_n(a))\supseteq\Omega_k\setminus \cS$.
Suppose not. Then there exists a place $v\in\Omega_k\setminus\cS$ such that  $v \not \in  \underset{n\in\ent/p^{e_1}\ent}{\cup} \Omega(I_n(a))$.
Since $a^v \in D_v$, there exists $n\in\ent/p^{\epsilon_0}\ent$ such that $a^v=(\iota^1(n))_v$.
As $v\notin \Omega (I_{n}(a))$, there exists $i\notin I_{n}(a)$ and a place $w$ of $K_i$ above $v$ such that $K_i^w\otimes_k K_0$  is isomorphic
to a product of field extensions of degree $p^{e_{i,w}}$ of $K_i^w$, with $e_{i,w} >\delta_i$.
Then by the definition of $\delta_i=\delta(n,a_i)$,we have $\pi_{e_i,e_{i,w}}(a_i)\neq \pi_{e_1,e_{i,w}}(n)$.
Hence the localization $a_i^v$ of the $i$-th coordinate of $a$ is not equal to the localization  of the $i$-th coordinate of $(n,...,n)$, which is a contradiction.
\fi
Our claim then follows.

\end{proof}

\subsection{Subtori}
For $0\leq r\leq \epsilon_0$, we set the following:
\begin{itemize}
\item[$\bullet$] $U_r=\{i\in\cI|\ e_{0,i}=r\}$.
\item[$\bullet$] $K_{U_r}=\underset{i\in U_r}{\prod} K_i$.
\item[$\bullet$] $L_r=K_0\times K_{U_r}$.
\item[$\bullet$] $E_{U_r}=K_0\otimes_k K_{U_r}$.
\end{itemize}

\medskip

Pick an $r$ such that $U_r$ is nonempty.
\if 0
In this section, we consider the norm-one torus $T_{L_r/k}$.
As we have seen from the previous section, the Tate-Shafarevich groups are
subgroups of $\underset{i\in \cI}{\oplus}\ent/p^{e_i}\ent$ quotient by $D$. When $e_i$ are distinct, the description becomes complicated.  Hence we first consider the smaller group $\sha^2_\omega(k,\hat{T}_{L_r/k})$ (resp. $\sha^2_\omega(k,\hat{T}_{L_r/k})$).
\fi
We define $S_{K_0,K_{U_r}}$ as in (\ref{e2}) and $(\ref{e3})$.
Namely let
$\phi_r:\rR_{E_{U_r}/k}(\gm_m)\ra \rR_{L_r/k}(\gm_m)$ be defined by $\phi_r(x) = (N_{E_{U_r}/K_0}(x)^{-1},N_{E_{U_r}/K_{U_r}}(x))$ and define $S_{K_0,K_{U_r}}$ by the following exact sequence.
\begin{equation}\label{e31}
\xymatrix@C=0.5cm{
  1 \ar[r] & S_{K_0,K_{U_r}} \ar[r] & \rR_{E_{U_r}/k}(\gm_m) \xrightarrow{\phi} T_{L_r/k} \ar[r] & 1 },
\end{equation}
The torus $S_{K_0,K_{U_r}}$ also fits in the exact sequence:
\begin{equation}\label{e32}
\xymatrix@C=0.5cm{
  1 \ar[r] &  S_{K_0,K_{U_r}} \ar[r] & \underset{i\in U_r}{\prod}\rR_{K_{i}/k}(T_{E_{i}/K_{i}}) \xrightarrow{N_{E_{U_r}/K_0}} T_{K_0/k}  \ar[r] & 1 }.
\end{equation}

Write $\rR_{K'/k}(\gm_m)$ as $\underset{i\in U_r}{\prod}\rR_{K_i/k}(\gm_m)\times\underset{i\in \cI\setminus U_r}{\prod}\rR_{K_i/k}(\gm_m)$.
There is a natural injective group homomorphism
$$\alpha_r:\underset{i\in U_r}{\prod}\rR_{K_i/k}(\gm_m)\to\underset{i\in U_r}{\prod}\rR_{K_i/k}(\gm_m)\times\underset{i\in \cI\setminus U_r}{\prod}\rR_{K_i/k}(\gm_m),$$
which sends $x$ to $(x,1)$.
Then $\alpha_r$ induces an injective homomorphism $\alpha_{E_{U_r}}$ from
$\rR_{E_{U_r}/k}(\gm_m)\to\rR_{E/k}(\gm_m)$, and an injective homomorphism $\id_{K_0}\times\alpha_r$ from $\rR_{K_0/k}(\gm_m)\times\underset{i\in U_r}{\prod}\rR_{K_i/k}(\gm_m)\to \rR_{K_0/k}(\gm_m)\times\rR_{K'/k}(\gm_m)$.
It is easy to check the following diagram commutes.
\begin{equation}\label{e33}
\CD
  1 @>>> S_{K_0,K'} @>>> \rR_{E/k}(\gm_m) @>\phi>>T_{L/k}@>>>1 \\
   @AAA @A\alpha_{U_r} AA @A\alpha_{U_r} AA  @A \id_{K_0}\times\alpha_r AA @AAA\\
  1  @>>> S_{K_0,K_{U_r}} @>>> \rR_{E_{U_r}/k}(\gm_m) @>\phi_r>>T_{L_r/k}@>>>1
\endCD
\end{equation}
Together with Lemma \ref{T and S}, we have
\begin{equation}\label{e34}
\CD
    \sha^1_\omega(k,\hat{S}_{K_0,K'}) @>\sim>> \sha^2_\omega(k,\hat{T}_{L/k}) \\
   @V\hat{\alpha}_{U_r} VV   @V \id_{K_0}\times\hat{\alpha}_r VV \\
   \sha^1_\omega(k,\hat{S}_{K_0,K_{U_r}}) @>\sim>> \sha^2_\omega(k,\hat{T}_{L_r/k}).
\endCD
\end{equation}

Note that for $i\in U_r$, we have $e_i=\epsilon_0-r$.
We define $G(K_0,K_{U_r})$ and $G_\omega(K_0,K_{U_r})$ by replacing $\cI$ with $U_r$ as in Section 2.1 and 2.2.
Namely for $a=(a_i)_{i\in U_r}\in\underset{i \in U_r}{\oplus}\ent/p^{e_i}\ent$, we define $\mathcal{S}_a$ to be the set
$\Omega_k\backslash(\underset{n\in\ent/p^{\epsilon_0-r}\ent}{\bigcup} \Omega (I_n(a)))$.
Set
\[G_{\omega}(K_0,K_{U_r}) =\{(a_i)_{i\in U_r}\in\underset{i \in U_r}{\oplus}\ent/p^{e_i}\ent |\ \cS_a\mbox{ is a finite set}\},
\]
and set $G(K_0,K_{U_r})$ to be the subset of $G_{\omega}(K_0,K_{U_r})$ consisting of all elements $a$ with $\cS_a=\emptyset$.
Consider the natural projection  $\varpi_r:\underset{i\in I}{\oplus}\ent/p^{e_i}\ent\to \underset{i\in U_r}{\oplus}\ent/p^{e_i}\ent$.
Then the natural projection induces a homomorphism $G_\omega(K_0,K')/D\to G_{\omega}(K_0,K_{U_r})/D,$
which we still denote by $\varpi_r$.
Note that by Theorem \ref{sha_omega T0_primepower}, we have isomorphisms $\sha^2_\omega(k,\hat{T}_{L/k})\simeq G_\omega(K_0,K')/D$ and $\sha^2_\omega(k,\hat{T}_{L_r/k})\simeq G_\omega(K_0,K_{U_r})/D$.
\begin{prop}
The morphism $\id_{K_0}\times\hat{\alpha}_r:\sha^2_\omega(k,\hat{T}_{L/k})\to \sha^2_\omega(k,\hat{T}_{L_r/k})$ coincides with $\varpi_r$.
\end{prop}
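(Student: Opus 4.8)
The plan is to trace through the identifications underlying both maps and verify that they agree on the level of the explicit cohomology groups. First I would unwind the definition of $\id_{K_0}\times\hat\alpha_r$: it is the map $\sha^2_\omega(k,\hat T_{L/k})\to\sha^2_\omega(k,\hat T_{L_r/k})$ coming from the right-hand commuting square in diagram (\ref{e34}), which in turn comes from the commutative diagram (\ref{e33}) of exact sequences by dualizing. So on the $S$-side it is $\hat\alpha_{U_r}\colon\sha^1_\omega(k,\hat S_{K_0,K'})\to\sha^1_\omega(k,\hat S_{K_0,K_{U_r}})$, and after composing with the isomorphisms of Theorem \ref{sha_omega T0_primepower} it becomes a map $G_\omega(K_0,K')/D\to G_\omega(K_0,K_{U_r})/D$. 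The claim is precisely that this map is the coordinate projection $\varpi_r$.

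Next I would pin down how the isomorphism of Theorem \ref{sha_omega T0_primepower} is built: it comes from the surjection $\rho^1$ in the exact sequence (\ref{e22}), which identifies $(\bigoplus_{i\in\cI}\ent/p^{e_i}\ent)/D$ with a subgroup of $H^1(k,\hat S_{K_0,K'})$ via the sequence (\ref{e30}) dual to (\ref{e3}), and where the component $\ent/p^{e_i}\ent$ is $H^1(k,\rI_{K_i/k}(\hat T_{E_i/K_i}))$. The key point is then purely functorial: the left vertical arrow $\hat\alpha_{U_r}$ in the dualized diagram (\ref{e33}) restricts, on the middle term $\underset{i\in\cI}{\prod}\rR_{K_i/k}(T_{E_i/K_i})$ of (\ref{e3}), to the projection onto the factors indexed by $U_r$ — this is immediate from the definition of $\alpha_r$ as $x\mapsto(x,1)$, so its dual kills the $\cI\setminus U_r$ components. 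Hence the induced map on $H^1$ of these permutation-type modules is exactly the coordinate projection $\bigoplus_{i\in\cI}\ent/p^{e_i}\ent\to\bigoplus_{i\in U_r}\ent/p^{e_i}\ent$, and this is compatible with the quotient by $D$ (the diagonal maps to the diagonal). I would make this precise by drawing the commutative square relating the sequences (\ref{e30}) for $K'$ and for $K_{U_r}$, with vertical maps induced by (\ref{e33}), and observing that $\rho^1$ is natural in this diagram; chasing an element $[a]=\rho^1(a)$ through then gives $\hat\alpha_{U_r}([a])=\rho^1(\varpi_r(a))$, which is the assertion.

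The main obstacle I expect is bookkeeping rather than anything deep: one must check that the identification of $H^1(k,\rI_{K_i/k}(\hat T_{E_i/K_i}))$ with $\ent/p^{e_i}\ent$ via \cite{BLP18} Lemma 1.2 and 1.3 is genuinely functorial with respect to the inclusion $\alpha_r$, so that "forgetting the factors outside $U_r$" on the torus side really does correspond to the coordinate projection on the $\ent/p^{e_i}\ent$-side with no sign or twist. Once that naturality is in hand, and once one notes that everything restricts compatibly to the subgroups $\sha^2_\omega$ (using Lemma \ref{T and S} and the fact, already used in the proof of Theorem \ref{sha_omega T0_primepower}, that the relevant $H^1$ of permutation modules vanish locally), the identification $\id_{K_0}\times\hat\alpha_r=\varpi_r$ follows by the diagram chase. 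I would keep the write-up short, citing diagram (\ref{e33}), the construction in the proof of Theorem \ref{sha_omega T0_primepower}, and the functoriality of the $H^1$-identifications, rather than redoing the cohomological computations.
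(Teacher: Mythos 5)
Your proposal is correct and follows essentially the same route as the paper: both reduce via Lemma \ref{T and S} and diagram (\ref{e34}) to showing that the induced map $\sha^1_\omega(k,\hat{S}_{K_0,K'})\to\sha^1_\omega(k,\hat{S}_{K_0,K_{U_r}})$ is the coordinate projection, observe that the dual of the inclusion $\alpha_r\colon x\mapsto(x,1)$ is the natural projection on the permutation modules and hence on their $H^1$, and conclude by naturality of the exact sequences (\ref{e22}) and (\ref{e21}). The only point you flag as a potential obstacle (functoriality of the identifications from \cite{BLP18} Lemma 1.2 and 1.3) is indeed the one implicit verification the paper also takes for granted, so nothing further is needed.
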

\begin{proof}
It is enough to show that the map induced by ${\alpha}_{E_{U_r}}$ from $\sha^1_\omega(k,\hat{S}_{K_0,K'})\to \sha^1_\omega(k,\hat{S}_{K_0,K_{U_r}})$ is equal to $\varpi_r$.

The map $\alpha_{E_{U_r}}$ gives a map between character groups $$\hat{\alpha}_{E_{U_r}}:\rI_{E/k}(\ent)=\rI_{E_{U_r}/k}(\ent)\underset{i\in\cI\setminus U_r}{\oplus}\rI_{E_i/k}(\ent)\to\rI_{E_{U_r}/k}(\ent),$$
which is the natural projection.
Hence the map from $\rI_{K'/k}(\hat{T}_{E/K'})=\rI_{K_{U_r}/k}(\hat{T}_{E_{U_r}/K_{U_r}})\underset{i\in\cI\setminus U_r}{\oplus}\rI_{K_i/k}(\hat{T}_{E_i/K_i})$ to $\rI_{K_{U_r}/k}(\hat{T}_{E_{U_r}/K_{U_r}})$ induced by ${\alpha}_{E_{U_r}}$ (restricted to $\rR_{K_{U_r}/k}({T}_{E_{U_r}/K_{U_r}})$) is the natural projection.

Therefore the map induced by
$\hat{\alpha}_{E_{U_r}}$ from $H^1(k,\rI_{K'/k}(\hat{T}_{E/K'}))\simeq \underset{i\in\cI}{\oplus}\ent/p^{e_i}\ent$ to
$H^1(k,\rI_{K_{U_r}/k}(\hat{T}_{E_{U_r}/K_{U_r}}))\simeq\underset{i\in U_r}{\oplus}\ent/p^{e_i}\ent$  is the natural projection.

By exact sequences (\ref{e22}) and (\ref{e21}), we have $\hat{\alpha}_{E_{U_r}}:\sha^1_\omega(k,\hat{S}_{K_0,K'})\to \sha^1_\omega(k,\hat{S}_{K_0,K_{U_r}})$ is equal to $\varpi_r$.
\end{proof}




\section{Preliminaries on cyclic extensions}

From now on we assume that $K_i$ are \emph{cyclic extensions} of $k$.

Let $p$ be a prime which divides $[L:k]$, and let $L(p)$ be the largest subalgebra of $L$ such that $[L(p):k]$ is a power of $p$.
By \cite{BLP18} Proposition 8.6, to compute $\sha^2(k,\hat{T}_{L/k})$ it is enough to compute $\sha^2(k,\hat{T}_{L(p)/k})$ for each such $p$.
Hence in the following we assume that $[L:k]$ is a \emph{power of $p$} unless we state otherwise.

By renaming these cyclic extensions, we always assume that the degree of $K_0$ is minimal.
Let  $p^{\epsilon_i}=[K_i:k]$ for all $i\in \cI'$.
For a nonnegative integer $f\leq \epsilon_i$, we denote by $K_i({f})$ the unique subfield of $K_i$ of degree $p^f$.

For all $i\in \cI$, we set $p^{e_{i,j}}=[K_i\cap K_j:k]$.
As we assume that $K_j\nsubseteq K_i$ for any $i$, $j\in \cI'$,
 $e_{i,j}<\min \{\epsilon_i,\epsilon_j\}$ for all $i,j\in\cI'$.

Note that for $i,j\in \cI$ with $i<j$, we have $e_{i,j}\geq e_{0,i}$.
This follows from  the assumption in \S 2.1 that $e_{0,i}\leq e_{0,j}$.

\medskip

In the following we prove some general facts about cyclic extensions which will be used later.

\begin{lemma}\label{bicyclic galois group}
Let $M/k$ and $N/k$ be cyclic extensions of $p$-power degree with $[N:k]\leq [M:k]$.
Then
$\Gal(MN/k)\simeq\Gal(M/k)\times \Gal(N/ N\cap M)$.
\end{lemma}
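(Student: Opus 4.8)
The plan is to exhibit $\Gal(MN/k)$ as an internal direct product of two subgroups: the Galois group $\Gal(MN/M)$, which restricts isomorphically to $\Gal(N/N\cap M)$, and a complement isomorphic to $\Gal(M/k)$. First I would recall the basic facts of the Galois correspondence for the compositum: $MN/k$ is Galois (a compositum of Galois extensions), the restriction maps give an embedding $\Gal(MN/k)\hookrightarrow \Gal(M/k)\times\Gal(N/k)$, and via this embedding $\Gal(MN/M)$ is identified with $\{1\}\times\Gal(N/N\cap M)$ while $\Gal(MN/N)$ is identified with $\Gal(M/M\cap N)\times\{1\}$. In particular $\Gal(MN/M)\cong\Gal(N/N\cap M)$ and the quotient $\Gal(MN/k)/\Gal(MN/M)\cong\Gal(M/k)$.

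The substance is to produce a subgroup $H\le\Gal(MN/k)$ with $H\cong\Gal(M/k)$ meeting $\Gal(MN/M)$ trivially and with $H\cdot\Gal(MN/M)=\Gal(MN/k)$; since $\Gal(MN/M)$ is normal, this gives the desired semidirect — in fact direct, as both factors are abelian and normal — product decomposition. Here is where the cyclicity and the degree hypothesis $[N:k]\le[M:k]$ enter. Write $[M:k]=p^a$, $[N:k]=p^b$ with $b\le a$, and $[M\cap N:k]=p^c$ with $c\le b$ (strict, but equality is harmless). I would pick a generator $\sigma$ of the cyclic group $\Gal(M/k)$ and a generator $\tau$ of $\Gal(N/k)$, normalized so that $\sigma$ and $\tau$ have the same restriction to the common subfield $M\cap N$ (possible because $\Gal(M/M\cap N)$ and $\Gal(N/M\cap N)$ are cyclic of orders $p^{a-c}$ and $p^{b-c}$, the restriction $\Gal(M/k)\twoheadrightarrow\Gal(M\cap N/k)$ sends $\sigma$ to a generator, likewise for $\tau$, so after replacing $\tau$ by a suitable power $\tau^u$ with $u$ prime to $p$ we may assume $\sigma|_{M\cap N}=\tau|_{M\cap N}$). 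By the compatibility criterion for the compositum there is a unique $\rho\in\Gal(MN/k)$ with $\rho|_M=\sigma$ and $\rho|_N=\tau$. Then I would set $H=\langle\rho\rangle$ and check that the order of $\rho$ is exactly $p^a$: its order is $\mathrm{lcm}$ of the orders of $\sigma$ and $\tau$, which is $\mathrm{lcm}(p^a,p^b)=p^a$ since $b\le a$. Thus $|H|=p^a=[M:k]$ and $H\cong\Gal(M/k)$.

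Finally I would verify $H\cap\Gal(MN/M)=1$: an element of $H$ fixing $M$ is $\rho^j$ with $\sigma^j=\mathrm{id}$, i.e. $p^a\mid j$, hence $\rho^j=\mathrm{id}$. Counting orders, $|H|\cdot|\Gal(MN/M)|=p^a\cdot p^{b-c}=p^{a+b-c}=[MN:k]=|\Gal(MN/k)|$, so $\Gal(MN/k)=H\cdot\Gal(MN/M)$ is the internal direct product $H\times\Gal(MN/M)\cong\Gal(M/k)\times\Gal(N/N\cap M)$. The one point requiring care — the main (mild) obstacle — is the normalization of $\tau$ so that it agrees with $\sigma$ on $M\cap N$, which is exactly what makes the pair $(\sigma,\tau)$ extend to an element of $\Gal(MN/k)$; everything else is order bookkeeping with $p$-groups. (One can phrase this more slickly: the image of $\Gal(MN/k)$ in $\Gal(M/k)\times\Gal(N/k)$ is the fibre product over $\Gal(M\cap N/k)$, and the map $(x,y)\mapsto x$ from this fibre product to $\Gal(M/k)$ admits the section $x\mapsto(x, \text{the matching power of }\tau)$ precisely because $\Gal(M/k)\to\Gal(M\cap N/k)$ is surjective with the target a quotient of the cyclic group $\Gal(N/k)$.)
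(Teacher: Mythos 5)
Your proposal is correct and follows essentially the same route as the paper: both split the exact sequence $1\to\Gal(MN/M)\to\Gal(MN/k)\to\Gal(M/k)\to 1$ by producing a lift of a generator of $\Gal(M/k)$ whose order is exactly $[M:k]$, the hypothesis $[N:k]\leq[M:k]$ guaranteeing (via the embedding into $\Gal(M/k)\times\Gal(N/k)$) that no element can have larger order. The only difference is cosmetic: you construct the lift explicitly from normalized generators via the fibre-product description, whereas the paper takes an arbitrary preimage of a generator and bounds its order abstractly.
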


\begin{proof}
The nature injection
$\Gal(MN/k) \to  \Gal(M/k)\times \Gal(N/k)$ shows that each element of $\Gal(MN/k)$ has order at most $[M:k]$.
Choose an element in $\Gal(MN/k)$ which projects a generator of $\Gal(M/k)$.
Then it generates a subgroup isomorphic to $\Gal(M/k)$.
Hence the exact sequence
$$\xymatrix@C=0.5cm{
  1 \ar[r] & \Gal(MN/M) \ar[r] & \Gal(MN/k) \ar[r] &  \Gal(M/k) \ar[r] & 1 }$$  splits.
Note that $\Gal(MN/M)$ is isomorphic to $\Gal(N/N\cap M)$.
Therefore $\Gal(MN/k)\simeq \Gal(M/k)\times \Gal(N/N\cap M)$.
\end{proof}

\medskip

\begin{lemma}\label{locally cyclic condition}
Let $M/k$, $N/k$, and $R/k$ be cyclic extensions of $p$-power degree and $v\in\Omega_k$.
Suppose the following:
 \begin{enumerate}
   \item $RM=NM.$
   \item $RN\subseteq RM$.
   \item $RN$ is locally cyclic at $v$, i.e. $RN\otimes_k {k_v}$ is a product of cyclic extensions of $k_v$.
 \end{enumerate}
 Then  either $R^v\otimes_{k_v} N^v$ is isomorphic to a product of copies of $N^v$ or $R^v\otimes_{k_v} M^v$ is isomorphic to a product of copies of $M^v$.
\end{lemma}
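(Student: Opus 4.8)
The plan is to analyze the local situation at $v$ by passing to the completions and using that all the relevant extensions sit inside the single cyclic-over-$k_v$ (in fact product-of-cyclic) algebra $RN\otimes_k k_v$. First I would reduce to a purely local statement: set $M^v=M\otimes_k k_v$, etc.; each of these is a product of cyclic extensions of $k_v$, but a priori only $RN\otimes_k k_v$ is \emph{assumed} to be a product of cyclic extensions. However, $M^v$, $N^v$, $R^v$ are each products of \emph{a single} cyclic extension of $k_v$ repeated (because $M,N,R$ are cyclic over $k$, so each decomposition factor of $M^v$ is a cyclic extension of $k_v$ of the same degree, namely the local degree), so it suffices to work with one local place $w\mid v$ at a time and prove: if $\tilde M,\tilde N,\tilde R$ are cyclic extensions of $k_v$ with $\tilde R\tilde M=\tilde N\tilde M$, $\tilde R\tilde N\subseteq \tilde R\tilde M$, and $\tilde R\tilde N/k_v$ cyclic, then $\tilde R\otimes_{k_v}\tilde N$ splits completely (is a product of copies of $\tilde N$) or $\tilde R\otimes_{k_v}\tilde M$ does. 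Note $\tilde R\otimes_{k_v}\tilde N$ is a product of copies of $\tilde N$ exactly when $\tilde R\subseteq\tilde N$, i.e. $\tilde R\tilde N=\tilde N$; similarly for $M$.

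The core is then a statement about subfields of a cyclic $p$-extension of a local (or any) field. Since $\tilde R\tilde N/k_v$ is cyclic of $p$-power degree, its subfield lattice is totally ordered: any two subfields are comparable. Now $\tilde R$ and $\tilde N$ are both subfields of $\tilde R\tilde N$, so either $\tilde R\subseteq\tilde N$ or $\tilde N\subseteq\tilde R$. In the first case we are done (the $N$-alternative holds). In the second case $\tilde N\subseteq\tilde R$, so $\tilde R\tilde N=\tilde R$, and then hypothesis (1) gives $\tilde R\tilde M=\tilde N\tilde M$; I would combine this with the fact that $\tilde R\subseteq\tilde R\tilde M=\tilde N\tilde M$ and, crucially, that $\tilde N\tilde M/k_v$ — being generated by the two comparable-after-completion cyclic extensions $\tilde N$ and $\tilde M$ — need not be cyclic, so I cannot directly conclude comparability of $\tilde M$ and $\tilde R$. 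Here I would instead argue: from $\tilde N\subseteq\tilde R$ and $\tilde R\tilde M=\tilde N\tilde M$ we get $\tilde R\subseteq\tilde N\tilde M\subseteq\tilde R\tilde M$, but also $\tilde R\tilde M\supseteq\tilde R\supseteq\tilde N$ forces (by Lemma \ref{bicyclic galois group} applied over $k_v$ to the cyclic pair $\tilde M,\tilde R$, or to $\tilde M,\tilde N$) a comparison of degrees; comparing $[\tilde R\tilde M:k_v]$ computed two ways via $\Gal(\tilde R\tilde M/k_v)\simeq\Gal(\tilde M/k_v)\times\Gal(\tilde R/\tilde R\cap\tilde M)$ and $\Gal(\tilde N\tilde M/k_v)\simeq\Gal(\tilde M/k_v)\times\Gal(\tilde N/\tilde N\cap\tilde M)$ yields $[\tilde R:\tilde R\cap\tilde M]=[\tilde N:\tilde N\cap\tilde M]$, and together with $\tilde N\subseteq\tilde R$ this should force $\tilde M\subseteq\tilde R$ (equivalently $\tilde R\otimes_{k_v}\tilde M$ a product of copies of $\tilde M$), giving the $M$-alternative.

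The main obstacle I anticipate is exactly this last step: extracting from the two splittings of $\mathrm{Gal}$ the containment $\tilde M\subseteq\tilde R$ rather than a weaker relation, since the compositum $\tilde N\tilde M$ is genuinely bicyclic (not cyclic) in general, so I must use hypotheses (1) and (2) together with the cyclicity of $\tilde R\tilde N$ in an essential way and cannot rely on a naive chain of subfield inclusions. I would handle it by carefully tracking inertia/decomposition data — or more cleanly, by working entirely inside $\mathrm{Gal}(\tilde R\tilde M/k_v)$, writing down the subgroups fixing $\tilde R$, $\tilde N$, $\tilde M$, $\tilde R\tilde N$, translating the hypotheses into group-theoretic relations among these subgroups of an abelian $p$-group of rank $\le 2$, and then reading off that the subgroup for $\tilde M$ is contained in the subgroup for $\tilde R$ in the relevant case. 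Once the local statement is proved at every $w\mid v$, reassembling over all $w$ gives the product-of-copies conclusion for $R^v\otimes_{k_v}N^v$ or $R^v\otimes_{k_v}M^v$, as required.
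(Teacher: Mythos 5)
Your overall route is the same as the paper's: reduce to a single local factor (writing $M^v\simeq\prod\tilde M$, etc., and using that $RN\otimes_k k_v$ being a product of cyclic extensions forces $\tilde R\tilde N/k_v$ to be cyclic), deduce from the chain structure of subfields of a cyclic $p$-extension that $\tilde R\subseteq\tilde N$ or $\tilde N\subseteq\tilde R$, and in the second case extract $[\tilde R:\tilde R\cap\tilde M]=[\tilde N:\tilde N\cap\tilde M]$ from $\tilde R\tilde M=\tilde N\tilde M$. That is exactly the paper's skeleton (the paper writes $F_R=\tilde M\cap\tilde R$, $F_N=\tilde M\cap\tilde N$ and the identity $[\tilde R\tilde M:\tilde M]=[\tilde R:F_R]=[\tilde N:F_N]$). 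However, two things need fixing. First, a direction error in your stated target: the $M$-alternative is $\tilde R\subseteq\tilde M$, not $\tilde M\subseteq\tilde R$; indeed $\tilde R\otimes_{k_v}\tilde M$ is a product of copies of $\tilde M$ precisely when $\tilde R\subseteq\tilde M$, consistent with your own earlier remark for $N$. As written, ``this should force $\tilde M\subseteq\tilde R$'' is neither what is needed nor what your degree count produces. Second, you leave the final deduction unexecuted and flag it as the main obstacle, proposing inertia-tracking or a rank-$2$ Galois-group analysis; none of that is necessary.

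Here is the missing step, which is essentially the paper's. If $\tilde N=\tilde R$ the $N$-alternative already holds, so assume $\tilde N\subsetneq\tilde R$. Both $\tilde N$ and $\tilde R\cap\tilde M$ are subfields of the cyclic $p$-extension $\tilde R/k_v$, hence comparable. If $\tilde R\cap\tilde M\subseteq\tilde N$, then $\tilde N\cap\tilde M=\tilde R\cap\tilde M$, and the identity $[\tilde R:\tilde R\cap\tilde M]=[\tilde N:\tilde N\cap\tilde M]$ forces $[\tilde R:k_v]=[\tilde N:k_v]$, i.e. $\tilde R=\tilde N$, a contradiction. Hence $\tilde N\subseteq\tilde R\cap\tilde M$, so $\tilde N\subseteq\tilde M$ and $[\tilde N:\tilde N\cap\tilde M]=1$; therefore $[\tilde R:\tilde R\cap\tilde M]=1$, i.e. $\tilde R\subseteq\tilde M$, which is the $M$-alternative. (Also note that, like the paper, you never actually use hypothesis (2); it is carried along only because of how the lemma is applied later.)
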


\begin{proof}
Let $\tilde M$, $\tilde N$ and $\tilde R$ be cyclic extensions of $k_v$ such that $M^v\simeq \prod \tilde M$,
$N^v\simeq \prod \tilde N$ and $R^v\simeq \prod \tilde R$.

Suppose that $R^v\otimes_{k_v} N^v=\prod \tilde{R}\otimes_{k_v}\tilde{N}\ncong \prod N^v$.
Then $\tilde{R}\cap\tilde {N}\neq \tilde {R}$.
We claim that $\tilde{R}\cap\tilde {N}=\tilde {N}$.
Suppose not. Then $\tilde{R}\tilde{N}$ is a bicyclic extension of $k_v$ and $\tilde{R}\otimes_{k_v}\tilde{N}$ is a product of bicyclic extensions.
As there is a surjective map from $R^v\otimes_{k_v} N^v$ to $RN\otimes_k k_v$  and
by assumption the latter is a product of cyclic extensions, the algebra $R^v\otimes_{k_v} N^v=\prod \tilde{R}\otimes_{k_v}\tilde{N}$ is also a product of cyclic extensions, which is a contradiction.
Hence $\tilde N$ is a proper subfield of $\tilde R$.

Now consider the fields $F_R=\tilde M \cap \tilde R$ and $F_N=\tilde M\cap\tilde N$.
As $RM=NM$, we have $\tilde R \tilde M=\tilde N \tilde M$.
Therefore $[\tilde R \tilde M:\tilde M]=[\tilde R:F_R]=[\tilde N:F_N]$.

We claim that  $\tilde N=F_N$.
Suppose not, i.e. $F_N\varsubsetneq \tilde N$.
Then we have $\tilde N \nsubseteq F_R$.
As they are both subfields of $\tilde R$, which is cyclic of $p$-power degree, this implies that $F_R\subseteq \tilde N$.
Hence $F_N=F_R$.
As  $[\tilde R:F_R]=[\tilde N:F_N]$, we have $\tilde R=\tilde N$, which is a contradiction.
Hence $F_N=\tilde N$ and $[\tilde R:F_R]=[\tilde N:F_N]=1$.
Since $\tilde R=F_R\subseteq \tilde M$, the algebra $R^v\otimes_{k_v} M^v$ is isomorphic to a product of copies of $M^v$.
\end{proof}

\medskip

\begin{lemma}\label{subfield of bicyclic}
Let $i,j\in \cI'$ and $i\neq j$. Let $R$ be a cyclic extension of $k$ of degree $p^d$.
Set $F=K_i\cap K_j\cap R$ and $p^h=[F:k]$.
Suppose that $R\subseteq K_i K_j$ and $d\leq \min\{\epsilon_i,\epsilon_j\}$. Then  ${d+e_{i,j}-h}\leq\min\{\epsilon_i, \epsilon_j\}$ and $R\subseteq K_i({d+e_{i,j}-h})K_j({d+e_{i,j}-h})$.
\end{lemma}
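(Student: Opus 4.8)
Since the statement is symmetric in $i$ and $j$, I would assume $\epsilon_j\le\epsilon_i$ and abbreviate $\epsilon=e_{i,j}$, $d'=d+e_{i,j}-h$, $p^{r_i}=[R\cap K_i:k]$ and $p^{r_j}=[R\cap K_j:k]$. The strategy is to realize $RK_i$ and $RK_j$ explicitly inside the bicyclic field $K_iK_j$ using the fact that the fields between $K_i$ (resp. $K_j$) and $K_iK_j$ form a chain, and then to pin down $K_i(d')K_j(d')$ by a degree count in the abelian group $\Gal(K_iK_j/k)$.

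\textbf{Position of $F$.} First I would analyze $F=R\cap K_i\cap K_j$. As $R$, $K_i$, $K_j$ are cyclic of $p$-power degree, $R\cap K_i$ and $R\cap K_j$ are comparable subfields of $R$ with intersection $F$, so $F$ is the smaller of the two, i.e. $h=\min\{r_i,r_j\}$. Likewise $R\cap K_i$ and $K_i\cap K_j$ are comparable subfields of $K_i$ with intersection $F$, so one of them equals $F$: either $\epsilon=h$ or $r_i=h$; symmetrically $\epsilon=h$ or $r_j=h$. Hence either $\epsilon=h$, or $r_i=r_j=h$.

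\textbf{Realizing $RK_i$ and $RK_j$, and the inequality.} By Lemma \ref{bicyclic galois group}, $\Gal(K_iK_j/K_i)\simeq\Gal(K_j/K_i\cap K_j)$ is cyclic of order $p^{\epsilon_j-\epsilon}$, so the fields between $K_i$ and $K_iK_j$ form a chain, namely the $K_iK_j(m)$ for $\epsilon\le m\le\epsilon_j$, with $[K_iK_j(m):K_i]=p^{m-\epsilon}$. Since $[RK_i:K_i]=[R:R\cap K_i]=p^{d-r_i}$, I get $RK_i=K_iK_j(n_2)$ with $n_2=d-r_i+\epsilon\le\epsilon_j$; symmetrically $RK_j=K_i(n_1)K_j$ with $n_1=d-r_j+\epsilon\le\epsilon_i$. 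Because $h=\min\{r_i,r_j\}$ we have $d'=\max\{n_1,n_2\}$, and $d'\ge\epsilon$ since $d\ge h$. Using the dichotomy of the previous paragraph one checks $n_1\le\epsilon_j$ as well (if $\epsilon=h$ then $n_1=d-r_j+h\le d\le\epsilon_j$; if $r_j=h$ then $n_1=n_2\le\epsilon_j$), so $d'\le\epsilon_j=\min\{\epsilon_i,\epsilon_j\}$, which is the first assertion. Moreover $R\subseteq RK_i\cap RK_j=K_iK_j(n_2)\cap K_i(n_1)K_j\subseteq K_iK_j(d')\cap K_i(d')K_j$, using $n_1,n_2\le d'$.

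\textbf{Identifying the intersection.} Finally I would show $K_iK_j(d')\cap K_i(d')K_j=K_i(d')K_j(d')$. Since $\Gal(K_iK_j/k)$ is abelian, for subfields $A,B$ of $K_iK_j$ one has $[A\cap B:k]\,[AB:k]=[A:k]\,[B:k]$. Applying this with $A=K_iK_j(d')$, $B=K_i(d')K_j$, $AB=K_iK_j$, and computing the four degrees from $\epsilon\le d'\le\min\{\epsilon_i,\epsilon_j\}$ (so $[A:k]=p^{\epsilon_i+d'-\epsilon}$, $[B:k]=p^{\epsilon_j+d'-\epsilon}$, $[AB:k]=p^{\epsilon_i+\epsilon_j-\epsilon}$), one finds $[A\cap B:k]=p^{2d'-\epsilon}$; and since $K_i(d')\cap K_j(d')=K_i\cap K_j$ (as $d'\ge\epsilon$), also $[K_i(d')K_j(d'):k]=p^{2d'-\epsilon}$. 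As $K_i(d')K_j(d')\subseteq A\cap B$, the two fields coincide, so $R\subseteq K_i(d')K_j(d')$. The main obstacle is the bookkeeping in the third step — in particular deriving $d'\le\min\{\epsilon_i,\epsilon_j\}$, where the dichotomy ``$\epsilon=h$ or $r_i=r_j=h$'' is exactly what is needed; once $RK_i,RK_j$ are identified as members of the two chains, everything else is a degree computation.
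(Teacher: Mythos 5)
Your argument is correct. For the inequality $d+e_{i,j}-h\le\min\{\epsilon_i,\epsilon_j\}$ you and the paper do essentially the same thing: both establish, from the comparability of subfields of a cyclic $p$-power extension, that either $e_{i,j}=h$ or $R\cap K_i=R\cap K_j=F$, and then compare degrees inside $K_iK_j$. For the containment $R\subseteq K_i(d')K_j(d')$ (with $d'=d+e_{i,j}-h$) the routes genuinely diverge. The paper works in $\Gal(K_iK_j/F)\simeq \Gal(K_i/F)\times\Gal(K_j/K_i\cap K_j)$ and observes that every element fixing $M=K_i(d')K_j(d')$ is of the form $p^{d-h}\sigma$, hence fixes $R$ because $\Gal(R/F)$ has exponent $p^{d-h}$; this gives $\Gal(K_iK_j/M)\subseteq\Gal(K_iK_j/R)$ in two lines. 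You instead exploit that the intermediate fields of $K_iK_j/K_i$ and of $K_iK_j/K_j$ form chains, pin down $RK_i=K_iK_j(n_2)$ and $RK_j=K_i(n_1)K_j$ with $n_1,n_2\le d'$, and then identify $K_iK_j(d')\cap K_i(d')K_j$ with $K_i(d')K_j(d')$ via the degree formula $[A\cap B:k]\,[AB:k]=[A:k]\,[B:k]$ for subfields of an abelian extension. Your version is a bit longer but yields slightly more information (the exact positions of $RK_i$ and $RK_j$ in the two chains, and the identity $K_iK_j(d')\cap K_i(d')K_j=K_i(d')K_j(d')$), while the paper's exponent argument is the more economical way to get just the stated containment. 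One small wording issue: in checking $n_1\le\epsilon_j$ you write ``if $r_j=h$ then $n_1=n_2$''; the equality $n_1=n_2$ requires the full second branch $r_i=r_j=h$ of your dichotomy (which is clearly what you intend), since $r_j=h$ alone only gives $n_1\ge n_2$.
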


\begin{proof}
By the definition of $h$, we have $h\leq e_{i,j}$.
If $h=e_{i,j}$, then ${d+e_{i,j}-h}\leq\min\{\epsilon_i, \epsilon_j\}$ by assumption.
If $h<e_{i,j}$, we claim that $R\cap K_i=R\cap K_j=F$.
To see this, first note that $R\cap K_i$ and $K_j\cap K_i$ are both subfields of the cyclic extension $K_i$.
Hence either $R\cap K_i\subseteq K_j\cap K_i$ or $K_j\cap K_i\varsubsetneq R\cap K_i$.
If $K_j\cap K_i\varsubsetneq R\cap K_i$, then $F=K_j\cap K_i$ which contradicts to the assumption $h<e_{i,j}$.
Therefore $R\cap K_i\subseteq K_j\cap K_i$. This implies that $R\cap K_i=F$.
Similarly we have $R\cap K_j=F$.

As $RK_i\subseteq K_iK_j$, by comparing the degrees of both sides, we have $\epsilon_i+d-h\leq \epsilon_i+\epsilon_j-e_{i,j}$ and hence ${d+e_{i,j}-h}\leq \epsilon_j$.
One can get ${d+e_{i,j}-h}\leq \epsilon_i$ by a similar way.

Next we show the second part of the statement.
If $h=d$, then by definition $R=K_i(d)=K_j(d)$ and the lemma is clear.
Suppose $h<d$.
We regard $R$, $K_i$ and $K_j$ as extensions of $F$.
Let $M=K_i({d+e_{i,j}-h})K_j({d+e_{i,j}-h})$.
Without loss of generality, we assume $\epsilon_i\geq\epsilon_j$.
By Lemma \ref{bicyclic galois group}, the Galois group $\Gal(K_iK_j/F)$ is isomorphic to $\Gal(K_i/F)\times\Gal(K_j/K_i\cap K_j)\simeq\ent/p^{\epsilon_i-h}\ent\times\ent/p^{\epsilon_j-e_{i,j}}$.
Let $(a,b)\in\ent/p^{\epsilon_i-h}\ent\times\ent/p^{\epsilon_j-e_{i,j}}$.
If $(a,b)$ fixes $M$, then $a$ fixes $K_i({d+e_{i,j}-h})$ and $b$ fixes $K_j({d+e_{i,j}-h})$. Hence there are $x$ and $y$ such that $a=p^{d+e_{i,j}-2h}x$ and $b=p^{d-h}y$.

On the other hand $R$ is a cyclic extension of degree $p^{d-h}$ of $F$, so
for every $\sigma\in\Gal(K_iK_j/F)$, we have ${p^{d-h}}\sigma\in\Gal(K_iK_j/R)$.
Hence we have $\Gal(K_iK_j/M)\subseteq \Gal(K_iK_j/R)$ and $R\subseteq M$.

\end{proof}

\medskip

For a nonempty subset $C\subseteq \cI$ and an integer $d\geq 0$, we define the field $M_{C}(d)$ to be the composite field $\langle K_i(d)\rangle_{i\in C}$.
\begin{lemma}\label{generator of bicyclic}
Let $d$ be a positive integer and $J$ be a non-empty subset of $\cI'$.
Suppose that $M=M_J(d)$ is bicyclic. Then
$M=K_i(d)K_j(d)$, for any  $i,j\in J$  such that the degree of $K_i(d)K_j(d)$ is maximal.
\end{lemma}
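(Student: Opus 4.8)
The plan is to translate the statement through the Galois correspondence into a fact about bicyclic $p$-groups and then prove that fact directly. Put $G=\Gal(M/k)$; since $M$ is bicyclic and $[M:k]$ is a power of $p$, we have $G\cong\ent/p^{a}\ent\times\ent/p^{b}\ent$ with $a\geq b\geq 1$, so $\exp G=p^{a}$. For $i\in J$ write $F_{i}=K_{i}(d)$ and $H_{i}=\Gal(M/F_{i})\leq G$. Since $F_{i}/k$ is cyclic, $G/H_{i}$ is cyclic of order $[F_{i}:k]$; and since $M=\langle F_{i}\rangle_{i\in J}$ we have $\bigcap_{i\in J}H_{i}=\Gal(M/M)=1$. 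Under this dictionary $[F_{i}F_{j}:k]=|G/(H_{i}\cap H_{j})|$, so a pair $i,j\in J$ makes $[F_{i}F_{j}:k]$ maximal exactly when it makes $|H_{i}\cap H_{j}|$ minimal, and the claim to prove becomes: \emph{any such minimizing pair satisfies $H_{i}\cap H_{j}=1$}.

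First I would single out an index $i_{0}\in J$ with $[F_{i_{0}}:k]=p^{a}$. If there were none, every $G/H_{i}$ would have exponent dividing $p^{a-1}$, i.e.\ $p^{a-1}G\subseteq H_{i}$ for all $i$, forcing $p^{a-1}G\subseteq\bigcap_{i}H_{i}=1$; but $p^{a-1}G\neq 1$ because $\exp G=p^{a}$, a contradiction. Next I would verify that $H_{i_{0}}$ is \emph{cyclic} of order $p^{b}$: lifting a generator of $G/H_{i_{0}}\cong\ent/p^{a}\ent$ to $g\in G$, the order of $g$ is divisible by $p^{a}$ and divides $\exp G=p^{a}$, so $\langle g\rangle\cong\ent/p^{a}\ent$; a count of orders gives $\langle g\rangle\cap H_{i_{0}}=1$ and $\langle g\rangle H_{i_{0}}=G$, hence $G=\langle g\rangle\times H_{i_{0}}$ and, cancelling the common factor $\ent/p^{a}\ent$, $H_{i_{0}}\cong\ent/p^{b}\ent$. (Here one should not claim that every order-$p^{b}$ subgroup of $G$ is cyclic — that is false once $a>b\geq 2$; what matters is that $H_{i_{0}}$ is the kernel of a surjection onto a cyclic group of order $\exp G$.)

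Now the conclusion follows quickly. Since $H_{i_{0}}$ is cyclic, its subgroups are totally ordered by inclusion, so the finite family $\{H_{i_{0}}\cap H_{i}\}_{i\in J}$ is totally ordered; its intersection is $H_{i_{0}}\cap\bigcap_{i\in J}H_{i}=1$, so its smallest member is $1$, i.e.\ $H_{i_{0}}\cap H_{j_{0}}=1$ for some $j_{0}\in J$. Translating back, $[F_{i_{0}}F_{j_{0}}:k]=|G|=[M:k]$, so $F_{i_{0}}F_{j_{0}}=M$. In particular $[M:k]$ is the maximal value of $[F_{i}F_{j}:k]$ over $i,j\in J$ (each $F_{i}F_{j}$ lies in $M$) and it is attained; hence every pair $i,j\in J$ realizing this maximum satisfies $[K_{i}(d)K_{j}(d):k]=[M:k]$, i.e.\ $K_{i}(d)K_{j}(d)=M=M_{J}(d)$, as claimed.

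I do not anticipate a real obstacle here; once the group-theoretic kernel is isolated the rest is bookkeeping. The only slightly delicate point is the cyclicity of $H_{i_{0}}$ in the second paragraph, for the reason indicated. If one prefers to avoid it, the same conclusion drops out of Pontryagin duality: with $C_{i}=H_{i}^{\perp}\leq\hat G$ one has each $C_{i}$ cyclic and $\sum_{i\in J}C_{i}=\hat G$; reducing modulo the Frattini subgroup $p\hat G$ (so that $\hat G/p\hat G\cong(\ent/p\ent)^{2}$) the images $\bar C_{i}$ are lines or zero and span, so two of them are distinct lines, and lifting back by Nakayama yields a pair with $C_{i}+C_{j}=\hat G$, equivalently $H_{i}\cap H_{j}=1$ — which, being the largest possible, is attained by every maximizing pair.
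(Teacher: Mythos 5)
Your proof is correct, and it takes a genuinely different route from the paper's. You translate everything into the subgroup lattice of $G=\Gal(M/k)\simeq\ent/p^a\ent\times\ent/p^b\ent$: using $\bigcap_i H_i=1$ you locate an index $i_0$ whose field realizes the exponent $p^a$, show that $H_{i_0}$ is a cyclic direct complement of order $p^b$, and then use the fact that the subgroups of a cyclic $p$-group are totally ordered to extract some $j_0$ with $H_{i_0}\cap H_{j_0}=1$; since some pair already generates $M$, every degree-maximizing pair must. The paper instead argues with the fields themselves: it fixes a pair $(i,j)$ of maximal compositum degree, observes that $K_i(d)\cap K_j(d)$ is then minimal and hence contained in each $K_i(d)\cap K_s(d)$, and shows $K_s(d)\subseteq K_i(d)K_j(d)$ for every $s\in J$ by a rank count --- if $N=K_i(d)K_j(d)\cap K_s(d)$ were a proper subfield of $K_s(d)$, then $\Gal(K_i(d)K_j(d)K_s(d)/N)\simeq\Gal(K_i(d)K_j(d)/N)\times\Gal(K_s(d)/N)$ would require three generators, contradicting bicyclicity of $M$. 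Your argument is self-contained and constructive, and it bypasses the paper's appeal to Lemma \ref{subfield of bicyclic} (needed there to see that $K_i(d)K_j(d)/N$ is bicyclic); the paper's version works directly with the chosen maximal pair throughout and stays in the field-theoretic language of the surrounding lemmas. Your parenthetical caution that one may not assume every order-$p^b$ subgroup of $G$ is cyclic is well placed, and the Pontryagin--Frattini variant you sketch is also sound.
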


\begin{proof}
As $M$ is bicyclic, there are at least two elements in $J$.
If $|J|=2$, then the claim is trivial.

Suppose that $|J|>2$.
Pick $i,j\in J$ such that the degree of $K_i(d)K_j(d)$ is maximal.
If $d\leq e_{i,j}$, then  $K_i(d)K_j(d)=K_i(d)$ which is of degree $p^d$.
Since for any $s\in J$ the degree of $K_i(d)K_s(d)$ is at least $p^d$, we have $K_i(d)K_s(d)=K_i(d)$ for all $s\in J$. Hence $M=K_i(d)$ which is a cyclic extension. This contradicts to our assumption.
Therefore $d>e_{i,j}$.

We claim that for any $s\in J$, the field $K_s(d)$ is contained in $K_i(d)K_j(d)$.
As the degree of $K_i(d)K_j(d)$ is maximal, the degree of $K_i(d)\cap K_j(d)$ is minimal.
Since $K_i$ is cyclic, this implies that $K_i(d)\cap K_j(d)\subseteq K_i(d)\cap K_s(d)$.
Set  $N=K_i(d)K_j(d)\cap K_s(d)$.
Note that $N$ is a cyclic extension. Let  $p^l$ be the degree of $[N:k]$.

We claim that $N=K_s(d)$.
Suppose that $N\varsubsetneq K_s(d)$, i.e. $l<d$.
Then  $K_i(d)K_j(d)$ is a bicyclic extension of $K_i(l)K_j(l)$.
Since $K_i(d)\cap K_j(d)\subseteq K_i(d)\cap K_s(d)$, we have $K_i(d)\cap K_j(d)\subseteq N$.
By Lemma \ref{subfield of bicyclic} we have $N\subseteq K_i({l})K_j({l})$.
Therefore $K_i(d)K_j(d)/N$ is a bicyclic extension of $N$.

Note that $\Gal(K_i(d)K_j(d)K_s(d)/N)\simeq \Gal(K_i(d)K_j(d)/N)\times \Gal(K_s(d)/N)$.
Since $N\varsubsetneq K_s(d)$ and $\Gal(K_i(d)K_j(d)/N)$ is bicyclic, the field $K_i(d)K_j(d)K_s(d)$ is not a bicyclic extension of $k$, which contradicts to the fact that $M$ is a bicyclic extension.
Hence $N=K_s(d)$ and $K_s(d)\subseteq K_i(d)K_j(d)$.
\end{proof}

\medskip

\begin{lemma}\label{definition of a'}
Let $a=(a_1,...,a_m)$ be an  element in $G_{\omega}(K_0,K')\setminus D$.
Set $\epsilon_0-d=\underset{i\notin I_{a_1}(a)}{\min}\{\delta(a_1,a_i)\}$.
Choose $j\notin I_{a_1}(a)$ minimal such that $\epsilon_0-d=\delta(a_1,a_j)$.
Set $a'=(a'_1,...,a'_m)\in \underset{i\in \cI}{\oplus}\ent/p^{e_i}\ent$ as follows:
\begin{equation}
a'_i=\left\{
\begin{array}{ll}
\pi_{e_j,e_i}(a_j), & \hbox{if $i\notin I_{a_1}(a)$ and $\epsilon_0-d=\delta(a_1,a_i)$};\\
\pi_{e_1,e_i}(a_1), & \hbox{otherwise.}
\end{array}
\right .
\end{equation}
Then $a'\notin D$ and $\cS_{a'}\subseteq \cS_{a}$.
\end{lemma}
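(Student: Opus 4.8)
The plan is to translate membership in $\cS_{a}$ into an explicit arithmetic condition on the tuple $(a_1,\dots,a_m)$ and then to check that this condition is preserved — indeed improved — when $a$ is replaced by $a'$. Fix $v\in\Omega_k$. For $i\in\cI$ and a place $w$ of $K_i$ over $v$, the extension $K_0/k$ being cyclic (hence Galois), the algebra $K_0\otimes_k K_i^w$ is a product of copies of one cyclic extension of $K_i^w$, of degree $p^{d_{i,w}}$ say; set $d_i(v)=\max_{w\mid v}d_{i,w}$, so that $d_i(v)\le e_i$ and, by the very definition of $\Sigma_i^e$, one has $v\in\Sigma_i^e$ if and only if $d_i(v)\le e$. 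Using the definition (\ref{e24}) of $\Omega(I_n)$, together with the remark that for $i\in I_n(a)$ one automatically has $\delta(n,a_i)=e_i\ge d_i(v)$, I would check that $v\in\Omega(I_n(a))$ if and only if $d_i(v)\le\delta(n,a_i)$ for every $i\in\cI$. Hence $v\notin\cS_a$ if and only if there is $n\in\ent/p^{e_1}\ent$ with $n\equiv a_i\pmod{p^{d_i(v)}}$ for all $i$; since a system of congruences $x\equiv c_i\pmod{p^{m_i}}$ is solvable precisely when $c_i\equiv c_{i'}\pmod{p^{\min\{m_i,m_{i'}\}}}$ for all $i,i'$, this amounts to
\[
(\star)\qquad\qquad \delta(a_i,a_{i'})\ \ge\ \min\{d_i(v),d_{i'}(v)\}\qquad\text{for all }i,i'\in\cI.
\]
This is precisely the criterion underlying the proof of Theorem \ref{sha_omega T0_primepower}, where $v\notin\cS_a$ is shown to be equivalent to $a^v\in D_v$.

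Granting this, $a'\notin D$ is easy. Put $c=\epsilon_0-d$ and $T=\{\,i\in\cI : i\notin I_{a_1}(a)\text{ and }\delta(a_1,a_i)=c\,\}$, so $j=\min T$ and $T\ne\emptyset$ because $a\notin D$ forces $\cI\setminus I_{a_1}(a)\ne\emptyset$. From the definition of $a'$, $a'_1=\pi_{e_1,e_1}(a_1)=a_1$ (as $1\in I_{a_1}(a)$) and $a'_j=\pi_{e_j,e_j}(a_j)=a_j$ (as $j\in T$); since $j\notin I_{a_1}(a)$ means $\pi_{e_1,e_j}(a_1)\ne a_j$, and $D=\{(\pi_{e_1,e_i}(n))_{i\in\cI}:n\in\ent/p^{e_1}\ent\}$, the tuple $a'$ is not in $D$. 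For $\cS_{a'}\subseteq\cS_a$, fix $v\notin\cS_a$; by $(\star)$ it suffices to show $\delta(a'_i,a'_{i'})\ge m:=\min\{d_i(v),d_{i'}(v)\}$ for all $i,i'$. Note that for $i\in T$ one has $c<e_i\le e_j$ (the first inequality because $i\notin I_{a_1}(a)$ and $e_i\le e_1$, the second because $j\le i$ and $e_1\ge\cdots\ge e_m$), and that $a'_i$ is the reduction modulo $p^{e_i}$ of $a_j$ if $i\in T$ and of $a_1$ if $i\notin T$. Hence if $i$ and $i'$ both lie in $T$, or both lie outside $T$, then $a'_i,a'_{i'}$ are reductions of one common element and $\delta(a'_i,a'_{i'})=\min\{e_i,e_{i'}\}\ge m$.

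There remains the case $i\in T$, $i'\notin T$. From the digit description, $\delta(a'_i,a'_{i'})\ge\min\{\delta(a_1,a_j),e_{i'}\}=\min\{c,e_{i'}\}$, and since $m\le e_{i'}$ always holds, it is enough to prove $m\le c$. This is clear if $e_{i'}\le c$. If $e_{i'}>c$ then, $i'$ being outside $T$: either $i'\in I_{a_1}(a)$, whence $\delta(a_1,a_{i'})=e_{i'}>c$; or $i'\notin I_{a_1}(a)$, whence $\delta(a_1,a_{i'})>c$ by the minimality of $c$. In both cases $\delta(a_1,a_{i'})>c=\delta(a_1,a_i)$, so the ultrametric inequality $\delta(a_i,a_{i'})\ge\min\{\delta(a_i,a_1),\delta(a_1,a_{i'})\}$ (an equality when the two values differ) gives $\delta(a_i,a_{i'})=c$; then $(\star)$ for $a$ yields $m\le\delta(a_i,a_{i'})=c$, finishing the argument.

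The only non-routine point is this mixed case of the last step: there one has to weigh the $\delta$-ultrametric inequality against the minimality built into the definition of $c$ and the choice $j=\min T$; the hypothesis $e_1\ge\cdots\ge e_m$ and the choice $j=\min T$ are also what ensure that every projection $\pi_{s,t}$ occurring above has $s\ge t$. The first paragraph is the conceptual heart of the proof, but it is essentially a repackaging of the computation already carried out in the proof of Theorem \ref{sha_omega T0_primepower}.
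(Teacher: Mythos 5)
Your argument is correct, but it takes a genuinely different route from the paper's. The paper proves $\cS_{a'}\subseteq\cS_a$ by showing that for every $n\in\ent/p^{e_1}\ent$ there is an $n'$ with $I_n(a)\subseteq I_{n'}(a')$ and $\delta(n,a_i)\le\delta(n',a'_i)$ for all $i\notin I_{n'}(a')$, and it produces this $n'$ explicitly through a three-way case analysis on $\delta(a_1,n)$ versus $\epsilon_0-d$ (taking $n'=a_1$ in two of the cases and an $n'$ dominating $a_j$ in the third). You instead unwind the definition of $\cS_a$ place by place: introducing the local splitting degrees $d_i(v)$ and observing that $v\in\Omega(I_n(a))$ amounts to $d_i(v)\le\delta(n,a_i)$ for \emph{all} $i$ (the indices in $I_n(a)$ being automatic), you convert $v\notin\cS_a$ via the solvability criterion for simultaneous congruences modulo powers of $p$ into the pairwise condition $(\star)$, which makes no reference to a witness $n$; you then verify $(\star)$ for $a'$ directly, the only substantive case being $i\in T$, $i'\notin T$, which you settle correctly using the ultrametric property of $\delta$ together with the minimality of $c=\epsilon_0-d$ to force $\delta(a_i,a_{i'})=c$ and hence $m\le c$. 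What your reorganization buys is the elimination of the case analysis on $n$ and a transparent reason why only pairwise compatibility matters; what it costs is the preliminary equivalence in your first paragraph, which you only sketch, though every step there (the reduction to ``for all $i$'', the bound $d_i(v)\le e_i$, and the prime-power CRT) is routine. The remaining details you rely on — $c<e_i\le e_j$ for $i\in T$ so that $\pi_{e_j,e_i}$ is defined, $T\ne\emptyset$ from $a\notin D$, and $a'_1=a_1$, $a'_j=a_j$ giving $a'\notin D$ — all check out against the paper's conventions $e_1\ge\cdots\ge e_m$ and $D=\{(\pi_{e_1,e_i}(n))_{i\in\cI}\}$.
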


\begin{proof}
First note that $d> e_{0,i}$ for all $i\notin I_{a_1}(a)$.
As $j\notin I_{a_1}(a')$, we have $a'\notin D$.

The inclusion $\cS_{a'}\subseteq \cS_{a}$ is equivalent to the inclusion
 $\underset{n\in\ent/p^{e_1}\ent}{\bigcup} \Omega (I_n(a))\subseteq\underset{n\in\ent/p^{e_1}\ent}{\bigcup} \Omega (I_n(a'))$, i.e. for $n\in\ent/p^{e_1}\ent$ and for $v\in\Omega (I_n(a))$, there is some $n'\in \ent/p^{e_1}\ent$ such that $v\in\Omega (I_{n'}(a'))$.
It is enough to show that for each $n\in \ent/p^{e_1}\ent$, there is some $n'\in \ent/p^{e_1}\ent$ such that $I_{n}(a)\subseteq I_{n'}(a')$ and $\delta(n,a_i)\leq\delta (n',a'_i)$ for all $i\notin I_{n'}(a')$.

\noindent
\textbf{case 1.} $\delta(a_1,n)> \epsilon_0-d$.
We claim that $I_n(a)\subseteq I_{a_1}(a')$ in this case.
For all $i\in I_n(a)$, we have $\delta(a_1,a_i)=\delta(a_1,\pi_{e_1,e_i}(n))=\min\{\delta(a_1,n),e_i\}$.
Hence we have either $\delta(a_1,a_i)=\delta(a_1,n)>\epsilon_0-d$ or $i\in I_{a_1}(a)$.
Therefore $a'_i=\pi_{e_1,e_i}(a_1)$ and $i\in I_{a_1}(a')$.

By the construction of $a'$, for any $i\notin I_{a_1}(a')$ we have $\delta(a_1,a_i)=\epsilon_0-d$ and $\delta(a'_1,a'_i)=\delta(a_1,a_j)=\epsilon_0-d$.
Since $\delta(a_1,n)>\epsilon_0-d$ and $\delta(a_1,a_i)=\epsilon_0-d$,
we have $\delta(n,a_i)=\epsilon_0-d=\delta(a'_1,a'_i)$.

\noindent
\textbf{case 2.} $\delta(a_1,n)=\epsilon_0-d$.
Then  for all $i\in I_n(a)\setminus I_{a_1}(a)$, we have $\delta(a_1,a_i)=\delta(a_1,n)=\epsilon_0-d$. If $i\in I_n(a)\cap I_{a_1}(a)$, then $e_i\leq \epsilon_0-d$ and hence
$\pi_{e_1,e_i}(a_1)=\pi_{e_j,e_i}(a_j)$.
In both cases, we have
$a'_i=\pi_{{e_j},{e_i}}(a_j)$  and $i\in I_{n'}(a')$ for any $n'\in \ent/p^{e_1}\ent$ such that $a_j=\pi_{e_1,{e_j}}(n')$.

Let $i\notin I_{n'}(a')$. Then we have $e_i>\epsilon_0-d$ and $a'_i=\pi_{e_1,{e_i}}(a_1)$.
This implies $\delta(n',a'_i)=\delta(a_j,a_1)=\epsilon_0-d$.
On the other hand $\delta(a_1,a_i)>\epsilon_0-d$ for any $i\notin I_{n'}(a')$.
Hence $\delta(n,a_i)=\delta(n,a_1)=\epsilon_0-d$ and $\delta(n',a'_i)=\delta(n,a_i).$

\noindent
\textbf{case 3.} $\delta(a_1,n)<\epsilon_0-d$.
Since $\epsilon_0-d=\underset{i\notin I_{a_1}(a)}{\min}\{\delta(a_1,a_i)\}$,
we have $I_{n}(a)\subseteq I_{a_1}(a)\subseteq I_{a_1}(a')$.
For $i\notin I_{a_1}(a')$, we have $\delta(a_1,a_i)=\epsilon_0-d$ and hence $\delta(n,a_i)=\delta(n,a_1)<\epsilon_0-d=\delta(a_1,a'_i)$.

From the above three cases we conclude that $\cS_{a'}\subseteq \cS_{a}$.
\end{proof}

We immediately have the following corollary.
\begin{coro}\label{step of a}
Keep notation as above.
If $a$  $\in G(K_0,K)\backslash D$ (resp. $G_\omega(K_0,K')\setminus D$), then $a'\in G(K_0,K')\backslash D$. (resp. $G_\omega(K_0,K')\setminus D$).
\end{coro}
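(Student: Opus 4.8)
The plan is to derive Corollary~\ref{step of a} directly from Lemma~\ref{definition of a'} together with the trivial observation $\cS_b = \emptyset \iff b \in G(K_0,K')$. Recall from Lemma~\ref{definition of a'} that, starting from $a \in G_\omega(K_0,K')\setminus D$, the constructed element $a'$ satisfies two properties: $a' \notin D$, and $\cS_{a'} \subseteq \cS_a$. These two facts are exactly what is needed.

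First I would treat the case $a \in G_\omega(K_0,K')\setminus D$. Here there is essentially nothing to prove beyond quoting the lemma: $a' \notin D$ is one of its conclusions, and $\cS_{a'} \subseteq \cS_a$ together with the finiteness of $\cS_a$ (which holds because $a \in G_\omega$) forces $\cS_{a'}$ to be finite, so $a' \in G_\omega(K_0,K')$. Hence $a' \in G_\omega(K_0,K')\setminus D$.

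Next I would treat the case $a \in G(K_0,K')\setminus D$. Since $G(K_0,K') \subseteq G_\omega(K_0,K')$, Lemma~\ref{definition of a'} still applies and gives $a' \notin D$ and $\cS_{a'} \subseteq \cS_a$. But now $a \in G(K_0,K')$ means precisely that $\underset{n\in\ent/p^{e_1}\ent}{\bigcup}\Omega(I_n(a)) = \Omega_k$, i.e. $\cS_a = \emptyset$. The inclusion $\cS_{a'} \subseteq \cS_a = \emptyset$ then gives $\cS_{a'} = \emptyset$, which is exactly the condition $a' \in G(K_0,K')$. Therefore $a' \in G(K_0,K')\setminus D$, completing the proof.

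There is no real obstacle here: the corollary is a bookkeeping consequence, and the only point requiring the slightest care is noticing that the definitions of $G$ and $G_\omega$ are phrased so that membership in each is monotone under the inclusion $\cS_{a'} \subseteq \cS_a$ — finiteness is preserved for $G_\omega$, and emptiness is preserved for $G$. All the genuine content (the construction of $a'$ and the verification of $\cS_{a'} \subseteq \cS_a$ via the three-case analysis on $\delta(a_1,n)$) has already been carried out in Lemma~\ref{definition of a'}.

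\begin{proof}
Since $G(K_0,K') \subseteq G_\omega(K_0,K')$, in either case $a$ lies in $G_\omega(K_0,K')\setminus D$, so Lemma~\ref{definition of a'} applies and yields $a' \notin D$ and $\cS_{a'} \subseteq \cS_a$.

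If $a \in G_\omega(K_0,K')\setminus D$, then $\cS_a$ is finite, hence so is $\cS_{a'}$, and therefore $a' \in G_\omega(K_0,K')\setminus D$.

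If $a \in G(K_0,K')\setminus D$, then $\underset{n\in\ent/p^{e_1}\ent}{\bigcup}\Omega(I_n(a)) = \Omega_k$, i.e. $\cS_a = \emptyset$. Consequently $\cS_{a'} = \emptyset$, which means $a' \in G(K_0,K')$, and since $a' \notin D$ we conclude $a' \in G(K_0,K')\setminus D$.
\end{proof}
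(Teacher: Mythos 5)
Your proof is correct and matches the paper's intent: the paper states this corollary as an immediate consequence of Lemma~\ref{definition of a'} (with no written proof), and your argument simply spells out that immediacy — $a'\notin D$ and $\cS_{a'}\subseteq\cS_a$ preserve finiteness of $\cS_a$ for the $G_\omega$ case and emptiness for the $G$ case. Nothing further is needed.
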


\begin{lemma}\label{K_0(d) splits}
Let $a=(a_1,...,a_m)$ be an  element in $G_\omega(K_0,K')\setminus D$.
Set $\epsilon_0-d=\underset{i\notin I_{a_1}(a)}{\min}\{\delta(a_1,a_i)\}$.
Choose $s,t\in I$ such that $\delta(a_1,a_s)>\epsilon_0-d$ and $\delta(a_1,a_t)= \epsilon_0-d$.
Then there is a finite set $\cS\subseteq\Omega_k$ such that for all $v\in\Omega_k\setminus \cS$
either $K_0(d)\otimes K^v_s$ is a product of copies of $K^v_s$ or  $K_0(d)\otimes K^v_t$ is a product of copies of $K^v_t$.
Moreover, if $a\in G(K_0,K')\setminus D$, then we can take $\cS=\emptyset$.
\end{lemma}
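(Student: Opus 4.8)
The plan is to prove the statement with $\cS=\cS_a=\Omega_k\setminus\bigcup_{n\in\ent/p^{e_1}\ent}\Omega(I_n(a))$: this set is finite precisely because $a\in G_\omega(K_0,K')$, and it is empty when $a\in G(K_0,K')$, so the ``moreover'' part comes for free. Thus fix $v\in\Omega_k\setminus\cS_a$; by definition of $\cS_a$ there is some $n\in\ent/p^{e_1}\ent$ with $v\in\Omega(I_n(a))$, and set $\mu=\delta(n,a_1)$. (Note $d\geq 1$, since for $i\notin I_{a_1}(a)$ we have $\delta(a_1,a_i)<e_i\leq\epsilon_0$, so $\epsilon_0-d<\epsilon_0$.)

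First I would translate the conclusion into membership in the sets $\Sigma_i^\bullet$. By the equivalent description (2) of $\Sigma_i^\bullet$, applied with $d$ replaced by $\epsilon_0-d$, a place $v$ lies in $\Sigma_i^{\epsilon_0-d}$ if and only if $K_0(d)\otimes_k K_i^w$ is a product of copies of $K_i^w$ for every place $w$ of $K_i$ above $v$, i.e. if and only if $K_0(d)\otimes_k K_i^v$ is a product of copies of $K_i^v$. So the lemma is exactly the assertion that $v\in\Sigma_s^{\epsilon_0-d}\cup\Sigma_t^{\epsilon_0-d}$. I will also use that $d'\mapsto\Sigma_i^{d'}$ is non-decreasing and that $\Sigma_i^{e_i}=\Omega_k$, because $K_0(\epsilon_0-e_i)=K_0\cap K_i$ is contained in $K_i$ and hence splits completely at every place of $K_i$.

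The core is a case distinction on $\mu$ using the ultrametric property of $\delta$: $\delta(x,z)\geq\min\{\delta(x,y),\delta(y,z)\}$, with equality whenever $\delta(x,y)\neq\delta(y,z)$ (valid even when $n,a_1,a_s,a_t$ lie in $\ent/p^N\ent$ for different $N$, since all the relevant truncations are taken below the smaller modulus). Applying this with middle term $a_1$ and the hypotheses $\delta(a_1,a_s)>\epsilon_0-d=\delta(a_1,a_t)$: if $\mu\leq\epsilon_0-d$, then $\mu<\delta(a_1,a_s)$, so $\delta(n,a_s)=\mu\leq\epsilon_0-d$; moreover $e_s\geq\delta(a_1,a_s)>\epsilon_0-d\geq\mu$ forces $\delta(n,a_s)<e_s$, i.e. $n\not\succeq a_s$, so $s\notin I_n(a)$, whence $v\in\Omega(I_n(a))\subseteq\Sigma_s^{\delta(n,a_s)}\subseteq\Sigma_s^{\epsilon_0-d}$. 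If instead $\mu>\epsilon_0-d$, then (equality case, since $\mu\neq\epsilon_0-d$) $\delta(n,a_t)=\min\{\mu,\epsilon_0-d\}=\epsilon_0-d$; if $e_t>\epsilon_0-d$ this gives $t\notin I_n(a)$ and $v\in\Sigma_t^{\delta(n,a_t)}=\Sigma_t^{\epsilon_0-d}$, while if $e_t=\epsilon_0-d$ (note always $e_t\geq\delta(a_1,a_t)=\epsilon_0-d$) then $v\in\Omega_k=\Sigma_t^{e_t}=\Sigma_t^{\epsilon_0-d}$. In every case $v\in\Sigma_s^{\epsilon_0-d}\cup\Sigma_t^{\epsilon_0-d}$, which is what we wanted.

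The points that need the most care are the $\delta$-bookkeeping — in particular verifying that the index $s$ (resp.\ $t$) really lies outside $I_n(a)$ in the relevant case, since only then does $v\in\Omega(I_n(a))$ constrain $\Sigma_s$ (resp.\ $\Sigma_t$) — and the degenerate subcase $e_t=\epsilon_0-d$, where the splitting of $K_0(d)\otimes K_t^v$ is forced for trivial degree reasons ($\Sigma_t^{e_t}=\Omega_k$) rather than deduced from membership in $\Omega(I_n(a))$. One should also observe that $s$ can always be chosen (for instance $s=1$, as $\delta(a_1,a_1)=\epsilon_0>\epsilon_0-d$), and that the cyclic-extension lemmas proved earlier in this section are not required for this particular statement: it is a purely combinatorial consequence of the description of the sets $\Omega(I_n(a))$.
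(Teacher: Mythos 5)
Your proof is correct, and its core — the exhaustive case split on $\delta(n,a_1)$ versus $\epsilon_0-d$, together with the degenerate subcase $e_t=\epsilon_0-d$ disposed of by the trivial splitting $\Sigma_t^{e_t}=\Omega_k$ — is exactly the case analysis in the paper's proof. The one real difference is the choice of exceptional set: the paper first replaces $a$ by the auxiliary element $a'$ of Lemma \ref{definition of a'} (whose coordinates outside $I_{a_1}(a)$ are normalized to truncations of a single $a_j$ with $\delta(a_1,a_j)=\epsilon_0-d$) and takes $\cS=\cS_{a'}$, which makes the verification that $s\notin I_n(a')$, resp.\ $t\notin I_n(a')$, and the computation of $\delta(n,a'_s)$, $\delta(n,a'_t)$ immediate; you instead work directly with $a$ and take $\cS=\cS_a$, absorbing that bookkeeping into the ultrametric identity for $\delta$ (which you verify carefully, including the point that the relevant truncations are all taken below the smaller modulus). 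Since $\cS_{a'}\subseteq\cS_a$ the paper's exceptional set is smaller, but both are finite for $a\in G_\omega(K_0,K')$ and empty for $a\in G(K_0,K')$, so both satisfy the statement; your route has the mild advantage of not invoking Lemma \ref{definition of a'} at all, at the cost of slightly heavier $\delta$-computations.
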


\begin{proof}
Let $a'$ be defined as in Lemma \ref{definition of a'}.
Then $a'\in G_\omega(K_0,K')\setminus D$.
We claim that for all $v\in\Omega_k\setminus\cS_{a'}$ either $K_0(d)\otimes K^v_s$ is a product of copies of $K^v_s$ or  $K_0(d)\otimes K^v_t$ is a product of copies of $K^v_t$.
Note that if $a\in G(K_0,K')\setminus D$, then $\cS_{a'}=\emptyset$.

Let $v\in\Omega_k\setminus \cS_{a'}$.
By the definition of $\cS_{a'}$, there is $n\in\ent/p^{e_1}\ent$ such that $v\in\Omega(I_n(a'))$.
We consider the following cases.

\noindent
\textbf{case 1:} $\delta(a_1,n)\leq \epsilon_0-d$.
Then $s\notin I_n(a')$ and $\delta(a_s,n)\leq \epsilon_0-d$.
By the definition of $\Omega(I_n(a'))$, we have $v\in\Sigma_s^{\epsilon_0-d}$.
Hence $K_0(d)\otimes K^v_s$ is a product of copies of $K^v_s$.

\noindent
\textbf{case 2:} $\delta(a_1,n)> \epsilon_0-d$.
If $t\in I_{a_1}(a')$, then $e_t=\epsilon_0-d$.
Hence $K_0(d)\otimes K^v_t$ is a product of copies of $K^v_t$.

Suppose that  $t\notin I_{a_1}(a')$.
Then $t\notin I_n(a')$ and $\delta(a_t,n)=\epsilon_0-d$.
By the definition of $\Omega(I_n(a'))$, we have $v\in\Sigma_t^{\epsilon_0-d}$.
Hence $K_0(d)\otimes K^v_t$ is a product of copies of $K^v_t$.
\end{proof}

\begin{prop}\label{bicyclic field}
Let $a=(a_1,...,a_m)$ be an  element in $G_\omega(K_0,K')\setminus D$.
Set $\epsilon_0-d=\underset{i\notin I_{a_1}(a)}{\min}\{\delta(a_1,a_i)\}$.
For any $s,t\in I$ with $\delta(a_1,a_s)>\epsilon_0-d$ and $\delta(a_1,a_t)= \epsilon_0-d$, we set
$u= \max\{s,t\}$. Let $\beta=\min\{e_{0,s},e_{0,t}\}$.
Then we have the following:
\begin{enumerate}
  \item The extension $K_0(d)\subseteq F_{d,s,t}:=K_s({d+e_{s,t}-\beta})K_t({d+e_{s,t}-\beta})$.
  Moreover, if  $e_{0,s}=e_{0,t}$, then $F_{d,s,t}=K_0(d)K_s({d+e_{s,t}-\beta})=K_0(d)K_t({d+e_{s,t}-\beta}).$
  \item Suppose further that $a\in G(K_0,K')$. Then the field $K_0(d)K_u({d+e_{s,t}-\beta})$ is locally cyclic
\end{enumerate}
\end{prop}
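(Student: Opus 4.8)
The plan is to reduce both statements to the inclusion $K_0(d)\subseteq K_sK_t$ together with two degree computations. Write $c=d+e_{s,t}-\beta$ and $u'=\min\{s,t\}$, so that $\beta=\min\{e_{0,s},e_{0,t}\}=e_{0,u'}$ by the normalization $e_{0,i}\le e_{0,j}$ for $i<j$. I first record the inequalities used throughout: exactly as in the proof of Lemma \ref{definition of a'}, $d>e_{0,i}$ for every $i\notin I_{a_1}(a)$, and combining this with $\delta(a_1,a_i)=e_i$ for $i\in I_{a_1}(a)$ and the hypotheses on $s,t$ gives $e_{0,s}<d$ and $e_{0,t}\le d$; hence $\beta\le d$ and $K_0(d)\cap K_s\cap K_t=K_0(\beta)$. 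Since $K_0$ has minimal degree, $d\le\epsilon_0\le\min\{\epsilon_s,\epsilon_t\}$, and since $e_{s,t}\ge e_{0,u'}=\beta$ (using $e_{i,j}\ge e_{0,i}$ for $i<j$) we get $c\ge d$.

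For the key inclusion $K_0(d)\subseteq K_sK_t$, suppose not and put $M=K_0(d)K_sK_t$. As $K_s/k$ and $K_t/k$ are cyclic, $\Gal(M/K_s)$ and $\Gal(M/K_t)$ are normal in $\Gal(M/k)$, so $Z:=\Gal(M/K_sK_t)$ is normal; moreover $Z\simeq\Gal(K_0(d)/K_0(d)\cap K_sK_t)$ is cyclic and, by assumption, nontrivial. By Chebotarev there are infinitely many places $v$, unramified in $M$, with decomposition group $D_v=Z$. Recall that $K_0(d)\otimes_k K_s^v$ is a product of copies of $K_s^v$ precisely when $D_v\cap\Gal(M/K_s)\subseteq\Gal(M/K_0(d))$, and similarly for $K_t$; for our $v$ both conditions fail, since $D_v\cap\Gal(M/K_s)=Z=D_v\cap\Gal(M/K_t)$ while $Z\not\subseteq\Gal(M/K_0(d))$ (otherwise $K_0(d)\subseteq M^Z=K_sK_t$). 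This contradicts Lemma \ref{K_0(d) splits}, which applies to exactly this $s$ and $t$, so $K_0(d)\subseteq K_sK_t$.

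Part (1) is now immediate from Lemma \ref{subfield of bicyclic} applied with $R=K_0(d)$, $i=s$, $j=t$: the hypotheses hold, and the integer $h$ there equals $\beta$ because $K_0(d)\cap K_s\cap K_t=K_0(\beta)$, so $c\le\min\{\epsilon_s,\epsilon_t\}$ and $K_0(d)\subseteq K_s(c)K_t(c)=F_{d,s,t}$. For the second assertion, assume $e_{0,s}=e_{0,t}=\beta$; using Lemma \ref{bicyclic galois group} together with $K_s(c)\cap K_t(c)=K_s\cap K_t$ (valid since $c\ge e_{s,t}$) and $K_0(d)\cap K_s(c)=K_0(\beta)$, one finds $[F_{d,s,t}:k]=p^{2c-e_{s,t}}$ and $[K_0(d)K_s(c):k]=p^{c+d-\beta}$, and these exponents coincide because $c=d+e_{s,t}-\beta$; since $K_0(d)K_s(c)\subseteq F_{d,s,t}$ this forces $F_{d,s,t}=K_0(d)K_s(c)$, and symmetrically $=K_0(d)K_t(c)$.

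For part (2), run the same degree count with $u'$ (which has $e_{0,u'}=\beta$ whether or not $e_{0,s}=e_{0,t}$) in place of $s$: this gives $[K_0(d)K_{u'}(c):k]=p^{c+d-\beta}=[F_{d,s,t}:k]$, whence $F_{d,s,t}=K_0(d)K_{u'}(c)$. Fix $v\in\Omega_k$. Since $a\in G(K_0,K')$, Lemma \ref{K_0(d) splits} holds with $\cS=\emptyset$, so $K_0(d)\otimes_k K_i^v$ is a product of copies of $K_i^v$ for some $i\in\{s,t\}$; then the decomposition group of $v$ in $\Gal(K_0(d)K_i/k)$ meets the normal subgroup $\Gal(K_0(d)K_i/K_i)$ trivially, hence embeds into the cyclic group $\Gal(K_i/k)$, so $K_0(d)K_i$ and all its subextensions are locally cyclic at $v$. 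If $i=u$ this already gives that $K_0(d)K_u(c)$ is locally cyclic at $v$; if $i=u'$, then $K_0(d)K_u(c)\subseteq F_{d,s,t}=K_0(d)K_{u'}(c)\subseteq K_0(d)K_{u'}$ is locally cyclic at $v$. Since $v$ was arbitrary, $K_0(d)K_u(c)$ is locally cyclic. The main obstacle is the inclusion $K_0(d)\subseteq K_sK_t$: it is the sole use of Chebotarev, and it is essential there that the subgroups of $\Gal(M/k)$ in play be normal (so one may argue with the decomposition group rather than its conjugacy class) and that $\Gal(M/K_sK_t)$ be cyclic (so that it occurs as a decomposition group of infinitely many places).
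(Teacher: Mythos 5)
Your proof is correct and follows essentially the same route as the paper's: Lemma \ref{K_0(d) splits} plus Chebotarev to get $K_0(d)\subseteq K_sK_t$, Lemma \ref{subfield of bicyclic} (with $h=\beta$) for part (1), and for part (2) the identification $F_{d,s,t}=K_0(d)K_{u'}(c)$ by a degree count followed by the local splitting dichotomy at each place. The only difference is one of detail: you make explicit the decomposition-group form of the Chebotarev step and the dimension computations that the paper dispatches with ``by dimension reasons,'' which is a faithful filling-in rather than a new argument.
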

\begin{proof}

Let $s,t\in I$ as above.
By Lemma \ref{K_0(d) splits}, there is a finite set $\cS$ such that for all $v\in\Omega_k\setminus\cS$ either $K_0(d)\otimes K^v_s$ is a product of copies of $K^v_s$ or  $K_0(d)\otimes K^v_t$ is a product of copies of $K^v_t$.
Hence $K_0(d)\otimes (K_sK_t)^v$ is a product of  copies of $(K_sK_t)^v$ for all $v\notin \cS$.

Since $\cS$ is a finite set, by Chebotarev's density theorem $K_0(d)\subseteq K_sK_t$.
Since $\epsilon_0-e_{0,s}=e_s\geq\delta(a_1,a_s)>\epsilon_0-d$, we have $d>\beta$.
By Lemma \ref{subfield of bicyclic}, we have $K_0(d)\subseteq K_s({d+e_{s,t}-\beta})K_t({d+e_{s,t}-\beta})$.

If $e_{0,s}=e_{0,t}$, then $\beta=e_{0,s}=e_{0,t}$.
By dimension reasons we have $K_0(d)K_s({d+e_{s,t}-\beta})=F_{d,s,t}=K_0(d)K_t({d+e_{s,t}-\beta})$.
This proves the first statement.

Suppose that $a\in G(K_0,K')$.
Since $d>\beta$, the field $F_{d,s,t}$ is bicyclic,
and its Galois group is isomorphic to $\ent/p^{d+e_{s,t}-\beta}\ent\times\ent/ p^{d-\beta}\ent$ by Lemma \ref{bicyclic galois group}.

We first assume that $e_{0,s}\leq e_{0,t}$. Then $\beta=e_{0,s}$ and $F_{d,s,t}=K_0(d)K_s({d+e_{s,t}-\beta})$ by  dimension reasons.
Note that  the field $K_0(d)K_t({d+e_{s,t}-\beta})$ is contained in $F_{d,s,t}$.
By Lemma \ref{K_0(d) splits}, at each place $v\in\Omega_k$ we have either $K_0(d)\otimes_k K^v_s$ splits into a product of $K^v_s$ or $K_0(d)\otimes_k K^v_t$ splits into a product of $K^v_t$.
For a place $v\in\Omega_k$, if $K_0(d)\otimes_k K^v_s$ splits into a product of $K^v_s$, then $F^v_{d,s,t}$ is a product of cyclic extensions of $k_v$. As a subalgebra of $F^v_{d,s,t}$, the algebra $(K_0(d)K_t({d+e_{s,t}-\beta}))^v$ is a product of cyclic extensions.
If  $K_0(d)\otimes_k K^v_t$ splits into a product of $K^v_t$, then $(K_0(d)K_t({d+e_{s,t}-\beta}))^v$ is a product of cyclic extensions.
Hence $K_0(d)K_t({d+e_{s,t}-\beta})$ is locally cyclic.

For $e_{0,s}\geq e_{0,t}$, a similar argument works.

\end{proof}

\medskip

\section{Patchable subgroups}

Recall that for each nonempty subset $U_r$ we define $G_\omega(K_0,K_{U_r})$ and there is a natural projection from
$G_\omega(K_0,K')$ to $G_\omega(K_0,K_{U_r})$. (See \S 2.3 for details.)
In view of the combinatorial description of $\sha^2_\omega(k,\hat{T}_{L/k})$ (resp. $\sha^2(k,\hat{T}_{L/k})$), the computation of $\sha^2_\omega(k,\hat{T}_{L/k})$ (resp. $\sha^2(k,\hat{T}_{L/k})$) will be much simpler if
the $e_i$'s are equal.
Hence we will calculate $G_\omega(K_0,K_{U_r})$  for each nonempty subset $U_r$  and then
"patch" them together to get the group $G_\omega(K_0,K')$.

Suppose that an element $x\in G_\omega(K_0,K_{U_r})$ can be patched into an element in
$ G_\omega(K_0,K')$. Then $x$ must be in the image of $G_\omega(K_0,K')$ under the projection map $\varpi_r$.
(See Section 2.3 for the definition of $\varpi_r$.)

Let $G^0_\omega(K_0,K')$  (resp. $G^0(K_0,K')$) be the subgroup consisting of elements $(a_1,...,a_m)\in G_\omega(K_0,K')$ (resp. $G(K_0,K')$) with $a_1=0$.
Then $G_\omega(K_0,K')=D\oplus G^0_\omega(K_0,K')$.
In Section 4.1 we define the \emph{patchable subgroup} $\tilde{G}(K_0,K_{U_r})$, which is in fact
the image of $G^0_\omega(K_0,K')$ under the projection map $\varpi_r$.

We show that there is a section of $\varpi_r$ from $\tilde{G}_\omega(K_0,K_{U_r})$ to $G^0_\omega(K_0,K_{U_r})$ and prove that $G_\omega(K_0,K')=D\oplus\underset{r}{\oplus}\tilde{G}_\omega(K_0,K_{U_r})$, where $r$ runs over positive integers with $U_r$ nonempty.
We prove similar results for $G(K_0,K')$.

Note that if $\cI=U_r$ for some $r$, then $G_\omega(K_0,K')=G_\omega(K_0,K_{U_r})$ and
no patching condition is needed.
Hence in the following we fix an integer $r$ such that $U_r$ is not empty and $U_r\neq \cI$.

We set $U_{>r}=\{i\in\cI|e_{0,i}>r\}$ and $U_{<r}=\{i\in\cI|e_{0,i}<r\}$.
Recall that we assume $\underset{i\in I'}{\cap} K_i=k$.
Hence $U_0$ is nonempty.

Recall that for a nonempty subset $C\subseteq \cI$ and an integer $d\geq 0$, $M_{C}(d)$ is the composite field $\langle K_i(d)\rangle_{i\in C}$.


\subsection{Algebraic patching degrees}

\begin{defn}\label{alg patching degree}
  Define the \emph{algebraic patching degree} $\Delta^\omega_r$ of $U_r$ to be the maximum nonnegative integer $d$
satisfying the following:
\begin{enumerate}
  \item If $U_{>r}$ is nonempty, then $M_{U_{>r}}(d)\subseteq\underset{i\in U_{r}}{\cap} K_0(d)K_i(d)$.
  \item If $U_{<r}$ is nonempty, then $M_{U_r}(d)\subseteq \underset{i\in U_{<r}}{\cap} K_0(d)K_i(d)$.
\end{enumerate}
If $U_r=\cI$, then we set $\Delta^\omega_r=\epsilon_0$.
\end{defn}

Note that  $K_0(r)=K_i(r)$ for all $i\in U_{\geq r}$. Hence by definition we have $\Delta^\omega_r\geq r$.
By Lemma \ref{subfield of bicyclic} and the definition of $\Delta^\omega_r$,  all nonnegative integers $d\leq\Delta^\omega_r$ satisfy the conditions (1) and (2) in the above definition.

\begin{lemma}\label{coro from def of Delta omega}
Let $d\leq\Delta^\omega_r$ be a nonnegative integer. If $U_{<r}$ is nonempty, then $K_0(d)K_j(d)\subseteq\underset{i\in U_{\leq r}}{\cap}K_0({d})K_i({d})$ for all $j\in U_r$
\end{lemma}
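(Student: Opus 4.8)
The plan is to unwind the right-hand intersection. Since $U_{\le r}=U_{<r}\cup U_r$, it suffices to fix $j\in U_r$ and verify $K_0(d)K_j(d)\subseteq K_0(d)K_i(d)$ in the two cases $i\in U_{<r}$ and $i\in U_r$ separately.

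For $i\in U_{<r}$ this is read off directly from the definition of $\Delta^\omega_r$. By the remark following Definition \ref{alg patching degree} every nonnegative $d\le\Delta^\omega_r$ satisfies condition (2), so (using $U_{<r}\neq\emptyset$) we have $M_{U_r}(d)\subseteq\bigcap_{i'\in U_{<r}}K_0(d)K_{i'}(d)$. As $K_j(d)\subseteq M_{U_r}(d)$, this already gives $K_0(d)K_j(d)\subseteq K_0(d)K_i(d)$.

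For $i\in U_r$ the desired inclusion is symmetric in $i$ and $j$, so I would instead prove the equality $K_0(d)K_i(d)=K_0(d)K_j(d)$ for all $i,j\in U_r$. If $d\le r$ this is trivial: since $e_{0,i}=e_{0,j}=r$, the field $K_0(r)=K_0\cap K_i=K_i(r)$ contains $K_0(d)$, $K_i(d)$, $K_j(d)$ as its unique subfield of degree $p^d$, so all three composita are equal to $K_0(d)$. Now suppose $d>r$. Choose $\ell\in U_{<r}$ (possible by hypothesis); then $e_{0,\ell}<r<d$, and condition (2) puts $K_i(d),K_j(d)\subseteq M_{U_r}(d)\subseteq K_0(d)K_\ell(d)=:B$. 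Here $K_0(d)\cap K_\ell(d)=K_0(e_{0,\ell})$ has degree $p^{e_{0,\ell}}<p^d$, so Lemma \ref{bicyclic galois group} gives $\Gal(B/K_0(d))\cong\Gal(K_\ell(d)/K_0(e_{0,\ell}))$, which is cyclic of order $p^{d-e_{0,\ell}}$. For any $i\in U_r$ one has $K_0(d)\cap K_i(d)=K_0(r)$, whence $[K_0(d)K_i(d):k]=p^{2d-r}$ and therefore $\Gal(B/K_0(d)K_i(d))=\Gal(B/K_0(d))\cap\Gal(B/K_i(d))$ is a subgroup of order $p^{r-e_{0,\ell}}$ of the cyclic group $\Gal(B/K_0(d))$ — hence its unique subgroup of that order, independent of $i$. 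Thus $K_0(d)K_i(d)=B^{\Gal(B/K_0(d)K_i(d))}$ does not depend on $i\in U_r$, and in particular equals $K_0(d)K_j(d)$. (Alternatively: $M_{U_r\cup\{0\}}(d)$ is a non-cyclic subfield of the bicyclic field $B$, hence itself bicyclic, and since $e_{i,j}\ge r=e_{0,i}$ for $i<j$ in $U_r$ every pair $\{0,i\}$ realises the maximal compositum degree in $U_r\cup\{0\}$, so Lemma \ref{generator of bicyclic} yields $M_{U_r\cup\{0\}}(d)=K_0(d)K_i(d)$ for each $i\in U_r$.)

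Putting the two cases together gives $K_0(d)K_j(d)\subseteq\bigcap_{i\in U_{\le r}}K_0(d)K_i(d)$. I expect the only genuine content to be the case $i,j\in U_r$ with $d>r$: the key point is that condition (2) together with the nonemptiness of $U_{<r}$ forces $K_0(d)$ and all the degree-$p^d$ cyclic fields $K_i(d)$ ($i\in U_r$) into one common bicyclic extension, after which uniqueness of subgroups of prescribed order in a cyclic $p$-group finishes it; everything else is bookkeeping with degrees of subfields of cyclic extensions.
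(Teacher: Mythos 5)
Your proof is correct, and its skeleton is the same as the paper's: split $U_{\leq r}$ into $U_{<r}$ and $U_r$, dispose of the $U_{<r}$ part by condition (2) of Definition \ref{alg patching degree} applied at level $d$ (justified by the remark following that definition), and reduce the $U_r$ part to showing that $K_0(d)K_i(d)$ is independent of $i\in U_r$. The difference lies in how that independence is established. The paper notes that $K_0(\Delta^\omega_r)M_{U_r}(\Delta^\omega_r)$ sits inside the bicyclic field $K_0(\Delta^\omega_r)K_i(\Delta^\omega_r)$ for $i\in U_{<r}$ and then invokes Lemma \ref{generator of bicyclic} to write $K_0(d)M_{U_r}(d)=K_0(d)K_j(d)$ for every $j\in U_r$, with Lemma \ref{subfield of bicyclic} handling the descent from $\Delta^\omega_r$ to $d$. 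You instead work directly inside $B=K_0(d)K_\ell(d)$ for a fixed $\ell\in U_{<r}$: since $\Gal(B/K_0(d))$ is cyclic by Lemma \ref{bicyclic galois group}, and each $\Gal(B/K_0(d)K_i(d))$ for $i\in U_r$ is, by a degree count, its unique subgroup of order $p^{r-e_{0,\ell}}$, the fields $K_0(d)K_i(d)$ all coincide. This is a slightly more elementary, self-contained route that trades the citation of Lemma \ref{generator of bicyclic} for the uniqueness of subgroups of prescribed order in a cyclic $p$-group; your parenthetical alternative is essentially the paper's argument. Both are sound; the one hypothesis that must be (and is) made explicit is that $K_i(d)\subseteq M_{U_r}(d)\subseteq B$, which is exactly condition (2) at level $d$ and requires $U_{<r}\neq\emptyset$.
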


\begin{proof}

Suppose that $U_{<r}$ is nonempty.
If $d\leq r$, the claim is trivial.
Assume $d> r$.
By the definition of $\Delta_r^\omega$ the field $K_0({\Delta^\omega_r})M_{U_r}(\Delta^\omega_r)$ is contained in $K_0({\Delta^\omega_r})K_i({\Delta^\omega_r})$ for all $i\in U_{<r}$.
By Lemma \ref{generator of bicyclic} we have $K_0(d)M_{U_r}(d)=K_0(d)K_j(d)$
for all $j\in U_r$.
By Lemma \ref{subfield of bicyclic} we have  $K_0(d)K_j(d)\subseteq \underset{i\in U_{< r}}{\cap}K_0(d)K_i(d)$ for all $j\in U_r$.
Hence $K_0(d)K_j(d)\subseteq \underset{i\in U_{\leq r}}{\cap}K_0(d)K_i(d)$ for all $j\in U_r$.
\end{proof}

\begin{prop}\label{bound on Delta omega}
Suppose that $U_{>r}$ is nonempty.
Let $r'$ be the smallest positive integer bigger than $r$ such that $U_{r'}$ is nonempty.
Then we have the following:
\begin{enumerate}
 \item If $r=0$, then $\Delta^\omega_r=\Delta^\omega_{r'}$.
 \item $\Delta^\omega_r\leq\Delta^\omega_{r'}$.
 \item $\Delta^\omega_r-r\geq\Delta^\omega_{r'}-r'$.
\end{enumerate}

\end{prop}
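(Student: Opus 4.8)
The plan is to unravel the definitions of the algebraic patching degrees $\Delta^\omega_r$ and $\Delta^\omega_{r'}$ in terms of the bicyclic fields $K_0(d)K_i(d)$ and the composite fields $M_C(d)$, and to transfer conditions between $U_r$ and $U_{r'}$ using Lemma \ref{subfield of bicyclic} and Lemma \ref{generator of bicyclic}. Recall that for $i\in U_{\geq r}$ one has $K_0(r)=K_i(r)$, and more generally the conditions defining $\Delta^\omega_r$ become vacuous for $d\leq r$, so throughout we may assume $d>r'$ when checking the nontrivial inclusions. For (2), I would start with $d=\Delta^\omega_r$ and verify that $d$ satisfies the two defining conditions for $\Delta^\omega_{r'}$; since $r'>r$, the set $U_{<r'}$ contains $U_r$ (and possibly more indices from $U_{\le r}$), while $U_{>r'}\subseteq U_{>r}$. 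The condition $M_{U_{>r'}}(d)\subseteq\bigcap_{i\in U_{r'}}K_0(d)K_i(d)$ should follow from condition (1) for $\Delta^\omega_r$ together with the chain of inclusions in Lemma \ref{coro from def of Delta omega}, which lets one replace $K_i(d)$ with $i\in U_r$ by $K_i(d)$ with $i\in U_{r'}$ (moving up the tower of $U$'s); the condition involving $U_{<r'}$ should follow from condition (2) for $\Delta^\omega_r$ plus Lemma \ref{subfield of bicyclic}. Maximality of $\Delta^\omega_{r'}$ then gives $\Delta^\omega_{r'}\geq\Delta^\omega_r$.

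For (1), when $r=0$ we additionally need $\Delta^\omega_0\geq\Delta^\omega_{r'}$. Here the point is that for $r=0$ the set $U_{<r}$ is empty, so condition (2) in Definition \ref{alg patching degree} is vacuous; only condition (1), namely $M_{U_{>0}}(d)\subseteq\bigcap_{i\in U_0}K_0(d)K_i(d)$, survives. I would take $d=\Delta^\omega_{r'}$ and show it satisfies this single condition. Since $U_0$ consists of indices with $e_{0,i}=0$, i.e.\ $K_0\cap K_i=k$, and since $e_1=\epsilon_0$ (so $1\in U_0$), one knows that $K_0(d)K_i(d)$ is the maximal bicyclic field one can build from $K_0(d)$ and $K_i(d)$; the defining conditions for $\Delta^\omega_{r'}$ already force $M_{U_{r'}}(d)$ and $M_{U_{>r'}}(d)$ to sit inside $K_0(d)K_i(d)$ for $i$ in lower strata, and I would iterate this down to the stratum $U_0$ using Lemma \ref{generator of bicyclic} to rewrite $K_0(d)M_{U_r}(d)=K_0(d)K_j(d)$ at each level. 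Combined with (2) this yields equality.

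For (3), I would rewrite the desired inequality as $\Delta^\omega_r-\Delta^\omega_{r'}\geq r-r'$, i.e.\ $\Delta^\omega_{r'}-\Delta^\omega_r\leq r'-r$. The idea is that raising the floor from $r$ to $r'$ cannot buy more than $r'-r$ extra degrees of agreement: if $d=\Delta^\omega_{r'}$ then, by definition, $M_{U_{r'}}(d)$ lies inside $K_0(d)K_i(d)$ for $i\in U_{<r'}$, in particular for $i\in U_r$; intersecting the relevant bicyclic towers and using that $K_0(r')=K_j(r')$ for $j\in U_{r'}$ forces $K_0(d-(r'-r))$ to already satisfy the conditions defining $\Delta^\omega_r$ at level $r$, after shifting indices of the subfields down by $r'-r$ via Lemma \ref{subfield of bicyclic}. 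Hence $\Delta^\omega_r\geq d-(r'-r)=\Delta^\omega_{r'}-(r'-r)$, which is exactly (3).

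The main obstacle I anticipate is (3): parts (1) and (2) are essentially monotonicity statements that follow by feeding one definition into the other and bookkeeping with the two cited lemmas on subfields of bicyclic extensions, whereas (3) requires a genuinely quantitative comparison of how far the two composite fields agree with the bicyclic towers, and getting the exact shift $r'-r$ (rather than something weaker) will require carefully tracking the degrees $d+e_{s,t}-\beta$ appearing in Lemma \ref{subfield of bicyclic} and in Proposition \ref{bicyclic field}, and checking that the intersection $\bigcap_{i\in U_{r'}}K_0(d)K_i(d)$ descends correctly to $\bigcap_{i\in U_r}K_0(d')K_i(d')$ with $d'=d-(r'-r)$.
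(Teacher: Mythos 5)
Your treatment of (1) and (2) follows the paper's route: for (2) you feed $d=\Delta^\omega_r$ into the two defining conditions for $\Delta^\omega_{r'}$ (using $U_{<r'}=U_{\leq r}$ and $U_{>r'}\subseteq U_{>r}$), and for (1) you feed $d=\Delta^\omega_{r'}$ into the single surviving condition for $\Delta^\omega_0$, since $U_{<0}=\emptyset$. One bookkeeping correction: the step that converts $\bigcap_{i\in U_r}K_0(d)K_i(d)$ into $\bigcap_{i\in U_{r'}}K_0(d)K_i(d)$ is not Lemma \ref{coro from def of Delta omega} (that lemma moves the intersection \emph{down} the strata, to $U_{\leq r}$) but Lemma \ref{generator of bicyclic}: since $K_0(d)M_{U_{>r}}(d)$ lies in a bicyclic field and $K_0(d)K_j(d)$ has maximal degree for $j\in U_{r'}$, one gets $K_0(d)M_{U_{>r}}(d)=K_0(d)K_j(d)$ for every $j\in U_{r'}$, whence $M_{U_{>r'}}(d)\subseteq\bigcap_{j\in U_{r'}}K_0(d)K_j(d)$. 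Similarly, the inclusion $M_{U_{r'}}(d)\subseteq\bigcap_{i\in U_{<r'}}K_0(d)K_i(d)$ comes from condition (1) for $\Delta^\omega_r$ (as $U_{r'}\subseteq U_{>r}$) pushed down by Lemma \ref{coro from def of Delta omega}, not from condition (2) as you state. These are fixable slips, not obstacles.

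Part (3) is where there is a genuine gap: you assert that the data defining $\Delta^\omega_{r'}$ ``forces $K_0(d-(r'-r))$ to already satisfy the conditions defining $\Delta^\omega_r$,'' but this is precisely the quantitative step that needs an argument, and you flag it yourself as the anticipated obstacle without supplying it. The missing idea in the paper's proof is to introduce the intersection field $F=\bigcap_{i\in U_{<r'}}K_0(\Delta^\omega_{r'})K_i(\Delta^\omega_{r'})$ and exploit that it is a \emph{cyclic} extension of $K_0(\Delta^\omega_{r'})$: for $i\in U_r$ one has $K_0(\Delta^\omega_{r'})K_i(\Delta^\omega_r)\subseteq F\subseteq K_0(\Delta^\omega_{r'})K_i(\Delta^\omega_{r'})$, so $F=K_0(\Delta^\omega_{r'})K_i(\gamma)$ for some $\gamma$, of degree $p^{\gamma-r}$ over $K_0(\Delta^\omega_{r'})$; on the other hand condition (2) for $\Delta^\omega_{r'}$ forces $K_0(\Delta^\omega_{r'})K_j(\Delta^\omega_{r'})\subseteq F$ for $j\in U_{r'}$, a subfield of degree $p^{\Delta^\omega_{r'}-r'}$ over $K_0(\Delta^\omega_{r'})$, whence $\gamma-r\geq\Delta^\omega_{r'}-r'$ by comparing degrees inside a cyclic extension. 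If now $\Delta^\omega_{r'}-r'>\Delta^\omega_r-r$, this gives $K_i(\Delta^\omega_r+1)\subseteq F\subseteq K_0(\Delta^\omega_{r'})K_j(\Delta^\omega_{r'})$ for all $j\in U_{<r'}$, and Lemma \ref{subfield of bicyclic} descends this to $K_i(\Delta^\omega_r+1)\subseteq K_0(\Delta^\omega_r+1)K_j(\Delta^\omega_r+1)$ for $j\in U_{<r}$; together with condition (1), which $\Delta^\omega_r+1$ inherits from $\Delta^\omega_{r'}$, this contradicts the maximality of $\Delta^\omega_r$ in Definition \ref{alg patching degree}. Without this degree count on $F$ (or an equivalent), the exact shift $r'-r$ in (3) is not established.
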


\begin{proof}
We first show (2).
Note that by our choice of $r'$, we have $U_{<r'}=U_{\leq r}$, which is nonempty.
By the definition of $\Delta^\omega_r$ and by Lemma \ref{coro from def of Delta omega}, we have
$M_{U_{r'}}(\Delta^\omega_r)\subseteq \underset{i\in U_{r}}{\cap} K_0({\Delta^\omega_r})K_i({\Delta^\omega_r})\subseteq \underset{i\in U_{< r'}}{\cap} K_0({\Delta^\omega_r})K_i({\Delta^\omega_r})$.

Suppose that $U_{>r'}$ is nonempty.
As $M_{U_{>r}}(\Delta^\omega_r)\subseteq K_0({\Delta^\omega_r})K_i({\Delta^\omega_r})$ for all $i\in U_r$,
 by Lemma \ref{generator of bicyclic} we have $K_0({\Delta^\omega_r})M_{U_{>r}}(\Delta^\omega_r)= K_0({\Delta^\omega_r})K_j({\Delta^\omega_r})$ for all $j\in U_{r'}$.
Hence $M_{U_{>r'}}(\Delta^\omega_r)\subseteq\underset{i\in U_{r'}}{\cap} K_0({\Delta^\omega_r})K_i({\Delta^\omega_r})$ and $\Delta^\omega_{r'}\geq\Delta^\omega_r$.

Suppose that $r=0$. Then $U_{<r'}=U_0$.
By the definition of $\Delta_{r'}^\omega$,  we have $M_{U_{> r'}}(\Delta^\omega_{r'})\subseteq\underset{i\in U_{ r'}}{\cap}K_0({\Delta^\omega_{r'}})K_i({\Delta^\omega_{r'}})$ and
 $K_0(\Delta^\omega_{r'})K_j(\Delta^\omega_{r'})$ is contained in
$\underset{i\in U_{0}}{\cap}K_0({\Delta^\omega_{r'}})K_i({\Delta^\omega_{r'}})$ for all $j\in U_{r'}$.
Hence $M_{U_{\geq r'}}(\Delta^\omega_{r'})\subseteq\underset{i\in U_{ 0}}{\cap}K_0({\Delta^\omega_{r'}})K_i({\Delta^\omega_{r'}})$.
Therefore $\Delta^\omega_{r'}\leq\Delta^\omega_0$. Combining this with statement (2), we get  (1).

Now suppose that $r>0$. We claim that $\Delta^\omega_{r'}-r'\leq\Delta^\omega_r-r$.
By  Lemma \ref{coro from def of Delta omega}, we have
$K_0({\Delta^\omega_{r}})K_i({\Delta^\omega_r})\subseteq\underset{j\in U_{<r'}}{\cap}K_0({\Delta^\omega_{r}})K_j({\Delta^\omega_r})$ for all $i\in U_r$.

Let $F=\underset{i\in U_{< r'}}{\cap}K_0({\Delta^\omega_{r'}})K_i({\Delta^\omega_{r'}})$.
By Lemma \ref{coro from def of Delta omega} we have $K_0({\Delta^\omega_{r'}})K_i({\Delta^\omega_r})\subseteq F,$
for all $i\in U_r$.

Let $i\in U_r$.
As $K_0({\Delta^\omega_{r'}})K_i({\Delta^\omega_r})\subseteq F\subseteq K_0({\Delta^\omega_{r'}})K_i({\Delta^\omega_{r'}})$, there is some $\Delta^\omega_r\leq\gamma\leq \Delta^\omega_{r'}$ such that $F=K_0({\Delta^\omega_{r'}})K_i({\gamma})$. As $i\in U_r$, the field $F$ is a cyclic extension of $K_0({\Delta^\omega_{r'}})$ of degree $p^{\gamma-r}$.
By the definition of $\Delta^\omega_{r'}$, for all $j\in U_{r'}$ we have $K_0({\Delta^\omega_{r'}})K_j({\Delta^\omega_{r'}})\subseteq F$ and $K_0({\Delta^\omega_{r'}})K_j({\Delta^\omega_{r'}})$ is a cyclic extension of $K_0({\Delta^\omega_{r'}})$ of degree $\Delta^\omega_{r'}-r'$. Hence $\Delta^\omega_{r'}-r'\leq \gamma-r$ for dimension reasons.

Suppose that $\Delta^\omega_{r'}-r'>\Delta^\omega_r-r$. Then $\gamma-r\geq\Delta^\omega_{r'}-r'\geq\Delta^\omega_r+1-r$.
For dimension reasons $K_0({\Delta^\omega_{r'}})K_i({\Delta^\omega_{r}+1})\subseteq F$.
Since $K_i({\Delta^\omega_r+1})\subseteq F\subseteq K_0({\Delta^\omega_{r'}})K_j({\Delta^\omega_{r'}})$ for all $j\in U_{<r'}$, by Lemma \ref{subfield of bicyclic} we have $K_i({\Delta^\omega_r+1})\subseteq K_0({\Delta^\omega_r+1})K_j({\Delta^\omega_r+1})$ for all  $j\in U_{<r}$.
Hence $\Delta^\omega_r+1$ satisfies condition (2) in Definition \ref{alg patching degree}.

By the choice of $r'$ and the definition of $\Delta^\omega_{r'}$, we have $U_{>r}=U_{\geq r'}$ and
$M_{U_{\geq r'}}(\Delta^\omega_{r'})\subseteq
\underset{i\in U_{<r'}}{\cap}K_0({\Delta^\omega_{r'}})K_i({\Delta^\omega_{r'}})\subseteq\underset{i\in U_{r}}{\cap}K_0({\Delta^\omega_{r'}})K_i({\Delta^\omega_{r'}})$.
Thus $\Delta^\omega_{r'}$ satisfies condition (1) in Definition \ref{alg patching degree}.
By assumption $\Delta^\omega_{r'}>\Delta^\omega_{r}+1$, so we have
$\Delta^\omega_r\geq\Delta^\omega_r+1$, which is a contradiction.
Therefore $\Delta^\omega_{r'}-r'\leq \Delta^\omega_r-r$. This proves statement (3).

\end{proof}

\begin{defn}\label{alg patchable}
Suppose that $U_r$ is nonempty.
Let $x=(x_i)_{i\in U_r} \in G_\omega(K_0,K_{U_r})$. We say that $x$ is \emph{algebraically patchable} if  $\delta(0,x_i)\geq\epsilon_0-\Delta^\omega_r$ for all $i\in U_r$. Here we regard $0$ as an element in $\ent/p^{\epsilon_0}\ent$. We define \emph{the algebraic patchable subgroup} of $G_\omega(K_0,K_{U_r})$ as follows:
If $r> 0$, it is the subgroup consisting of all algebraically patchable elements of $G_\omega(K_0,K_{U_r})$;
if $r=0$, it is the subgroup consisting of all algebraically patchable elements of $G_\omega(K_0,K_{U_0})$ with $x_1=0$.
\end{defn}

For $x=(x_i)_{i\in U_r}\in G_\omega(K_0,K_{U_r})$,
define $a_{x}=(a_1,...,a_m)\in \underset{i\in \cI}{\oplus}\ent/p^{e_i}\ent$ as follows:
\begin{equation}\label{definition of ax}
a_i=\left\{
\begin{array}{ll}
x_i, & \hbox{if $i\in U_r,$}\\
0, & \hbox{otherwise.}
\end{array}
\right .
\end{equation}

In the following we show that $x$ is algebraically patchable if and only if $a_x$  is
in $G^0_\omega(K_0,K')$.

\begin{prop}\label{projector alg_pachable}
Let $x\in G_\omega(K_0,K_{U_r})$ and $a_{x}$ be defined as above.
If $a_{x}\in G^0_\omega(K_0,K')$, then $x$ is algebraically patchable.
\end{prop}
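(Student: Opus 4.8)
The plan is to argue by contraposition: assume $x = (x_i)_{i \in U_r}$ is \emph{not} algebraically patchable and show $a_x \notin G^0_\omega(K_0,K')$, i.e. that $\cS_{a_x}$ is infinite. By Definition~\ref{alg patchable}, failure of algebraic patchability means there is some $i_0 \in U_r$ with $\delta(0,x_{i_0}) < \epsilon_0 - \Delta^\omega_r$, equivalently $x_{i_0}$ does \emph{not} dominate $0$ even after projecting to the level $\epsilon_0 - \Delta^\omega_r - 1$; concretely the smallest index $f$ with $\pi_{e_i,f}(x_{i_0}) \neq 0$ satisfies $f \leq \epsilon_0 - \Delta^\omega_r$. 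First I would unwind what $v \in \bigcup_n \Omega(I_n(a_x))$ means for a place $v$: there must exist $n \in \ent/p^{e_1}\ent$ with $v \in \Omega(I_n(a_x)) = \bigcap_{i \notin I_n(a_x)} \Sigma_i^{\delta(n,a_i)}$. The coordinates of $a_x$ are either $x_i$ (for $i \in U_r$) or $0$ (for $i \notin U_r$), and in particular $a_1 = 0$ since $1 \in U_0$ and $r > 0$ is the relevant case — here I should handle the $r = 0$ case separately using the extra hypothesis $x_1 = 0$ built into the definition of the patchable subgroup, so that $a_1 = 0$ in both cases.

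The key step is to locate an index $i_0 \in U_r$ witnessing non-patchability and then show that for the set of places $v$ at which $K_0(\Delta^\omega_r + 1) \otimes_k K_{i_0}^v$ is \emph{not} split (a set of positive density, hence infinite, by Chebotarev, since $K_0(\Delta^\omega_r+1) \not\subseteq K_{i_0}$ as $\Delta^\omega_r + 1 > r + e_{0,i_0}$... wait — one must check $\Delta^\omega_r \geq r = e_{0,i_0}$, which holds by the remark after Definition~\ref{alg patching degree}, so $K_0(\Delta^\omega_r+1) \cap K_{i_0} = K_0(r) $ has degree strictly less), the place $v$ cannot lie in any $\Omega(I_n(a_x))$. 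Here is the mechanism: if $v \in \Omega(I_n(a_x))$ with $i_0 \notin I_n(a_x)$, then $v \in \Sigma_{i_0}^{\delta(n,a_{i_0})}$, meaning $K_0(\epsilon_0 - \delta(n,x_{i_0})) \otimes_k K_{i_0}^v$ is split; combined with the fact that $\delta(n, x_{i_0}) \le \delta(0, x_{i_0}) + \delta(n,0)$ type estimates and $a_1 = 0$ forces $n$ to be constrained, I would derive that $\epsilon_0 - \delta(n,x_{i_0}) \le \Delta^\omega_r$, contradicting non-splitness of $K_0(\Delta^\omega_r+1) \otimes K_{i_0}^v$. If instead $i_0 \in I_n(a_x)$, then $n \succeq x_{i_0}$, so $\delta(0,n) \le \delta(0,x_{i_0}) \leq \epsilon_0 - \Delta^\omega_r - 1$, which means $n$ itself is "far from $0$"; then using that $a_j = 0$ for $j \notin U_r$ — and $U_0$ is nonempty, so such a $j \in U_0$ exists — I get $v \in \Sigma_j^{\delta(n,0)}$ for that $j \notin I_n(a_x)$, i.e. $K_0(\epsilon_0 - \delta(0,n)) \otimes K_j^v$ is split with $\epsilon_0 - \delta(0,n) \ge \Delta^\omega_r + 1$; but then, via Proposition~\ref{bound on Delta omega}(1) relating $\Delta^\omega_0$ to $\Delta^\omega_r$ and the bicyclic-subfield lemmas (Lemma~\ref{subfield of bicyclic}, Lemma~\ref{generator of bicyclic}), splitness of a large subfield of $K_0$ in $K_j^v$ propagates to splitness of $K_0(\Delta^\omega_r+1)$ in $K_{i_0}^v$ — contradiction again.

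So the logical skeleton is: (i) extract the witness $i_0$ and the threshold $\Delta^\omega_r$; (ii) choose, via Chebotarev, an infinite set of places $v$ where $K_0(\Delta^\omega_r+1)\otimes_k K_{i_0}^v$ is non-split; (iii) show no such $v$ lies in $\bigcup_n \Omega(I_n(a_x))$, splitting into the cases $i_0 \in I_n(a_x)$ and $i_0 \notin I_n(a_x)$ and in the former case invoking the interplay between $U_r$, $U_0$, and the algebraic patching degree; (iv) conclude $\cS_{a_x}$ is infinite, so $a_x \notin G_\omega(K_0,K')$, a fortiori not in $G^0_\omega(K_0,K')$. I expect the main obstacle to be step (iii) in the case $i_0 \in I_n(a_x)$: one must convert a local splitting statement about \emph{some} auxiliary field $K_j$ ($j \in U_0$) into the desired local splitting about $K_{i_0}$, and this is exactly where the defining property of $\Delta^\omega_r$ (the mutual containment of $M_{U_{>r}}(d)$, $M_{U_r}(d)$ inside the various $K_0(d)K_i(d)$) and Proposition~\ref{bound on Delta omega} must be used carefully — keeping track of which subfield of $K_0$ splits, at degree exactly $\Delta^\omega_r$ versus $\Delta^\omega_r + 1$, is the delicate bookkeeping. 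A secondary nuisance is making sure the two regimes $r = 0$ and $r > 0$ are both covered by the same argument modulo the $x_1 = 0$ convention.
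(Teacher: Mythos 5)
Your contrapositive strategy does not work as sketched, and the gap is in step (iii). The paper argues in the forward direction: assuming $a_x\in G^0_\omega(K_0,K')$, it sets $\epsilon_0-d=\min_{i\in U_r}\delta(0,a_i)$ and shows directly that $d$ satisfies conditions (1) and (2) of Definition~\ref{alg patching degree}, by feeding the pairs $(s,t)$ with $s\notin U_r$, $t\in U_r'$ into Proposition~\ref{bicyclic field} (whose Chebotarev step goes in the easy direction: ``split at almost all places $\Rightarrow$ field containment'') and then invoking Lemma~\ref{M_U equal M_U'}; this yields $d\le\Delta^\omega_r$, which is exactly algebraic patchability. Your route instead requires manufacturing infinitely many places $v$ that avoid \emph{every} $\Omega(I_n(a_x))$ simultaneously, and the single condition you impose --- $K_0(\Delta^\omega_r+1)\otimes_k K_{i_0}^v$ non-split --- does not achieve this. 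Concretely: for $n$ with $i_0\notin I_n(a_x)$ but $\delta(n,x_{i_0})$ large (an $n$ agreeing with $x_{i_0}$ to high precision without dominating it), membership $v\in\Sigma_{i_0}^{\delta(n,x_{i_0})}$ only asks that the \emph{small} field $K_0(\epsilon_0-\delta(n,x_{i_0}))\subseteq K_0(\Delta^\omega_r)$ split over $K_{i_0}^v$, and that is perfectly consistent with non-splitness of the larger field $K_0(\Delta^\omega_r+1)$. Your sentence ``I would derive that $\epsilon_0-\delta(n,x_{i_0})\le\Delta^\omega_r$, contradicting non-splitness of $K_0(\Delta^\omega_r+1)\otimes K_{i_0}^v$'' has the implication backwards: splitness of a proper subfield never contradicts non-splitness of a larger one. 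So such $n$ are not excluded, and $v$ may well lie in $\Omega(I_n(a_x))$.

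The second half of step (iii) (the case $i_0\in I_n(a_x)$) is also unjustified. There you want to propagate splitness of a large subfield of $K_0$ over some $K_j$ with $j\in U_0$ to splitness of $K_0(\Delta^\omega_r+1)$ over $K_{i_0}$, ``via the defining property of $\Delta^\omega_r$.'' But the defining property only provides the containments $M_{U_r}(d)\subseteq\bigcap_{i\in U_{<r}}K_0(d)K_i(d)$ (and its companion for $U_{>r}$) for $d\le\Delta^\omega_r$; at $d=\Delta^\omega_r+1$ at least one of the two conditions fails, by maximality --- that is the whole content of $\Delta^\omega_r$ --- so there is no containment available at the level where you need it, and Proposition~\ref{bound on Delta omega} does not supply one. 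To repair the contrapositive one would have to pin down \emph{which} containment fails at level $\Delta^\omega_r+1$ and then run Chebotarev in the hard direction, choosing a Frobenius class in the Galois group of a large compositum that forces non-splitting at several of the $K_i$ simultaneously; that is a substantially more delicate argument than the direct one. I would recommend abandoning the contraposition and instead using the hypothesis $a_x\in G^0_\omega(K_0,K')$ positively, as input to Proposition~\ref{bicyclic field}, to verify the two conditions of Definition~\ref{alg patching degree} for $d=\epsilon_0-\min_{i\in U_r}\delta(0,a_i)$.
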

We first prove the following Lemma.

\begin{lemma}\label{M_U equal M_U'}
Keep the notation as in Proposition \ref{projector alg_pachable}.
Suppose that $a_{x}=(a_1,...,a_m)\in G_\omega(K_0,K')\setminus D$.
Set $\epsilon_0-d=\underset{i\notin I_{a_1}(a_x)}{\min}\{\delta (a_1,a_i)\}$ and
$U_r'=\{i\in U_r|\epsilon_0-d=\delta(a_1,a_i)\}.$
If $U'_r$ and $U_r\setminus U_r'$ are both nonempty, then $K_0({d})K_s({d})=K_0({d})K_t({d})$ for any $s\in U_r\setminus U'_r$ and any $t\in U_r'$.
In particular $K_0({d})M_{U_r}({d})=K_0({d})M_{U'_r}({d})=K_0({d})M_{U\setminus U'_r}({d})$.
\end{lemma}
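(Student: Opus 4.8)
Lemma \ref{M_U equal M_U'} asserts that under the hypotheses, for $s\in U_r\setminus U'_r$ and $t\in U'_r$ we have $K_0(d)K_s(d)=K_0(d)K_t(d)$, and hence the composites $K_0(d)M_{U_r}(d)$, $K_0(d)M_{U'_r}(d)$, $K_0(d)M_{U_r\setminus U'_r}(d)$ all coincide. Let me sketch a proof.

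The plan is to exploit that $s\in U_r\setminus U'_r$ means $\delta(a_1,a_s)>\epsilon_0-d$, while $t\in U'_r$ means $\delta(a_1,a_t)=\epsilon_0-d$, so that the pair $(s,t)$ is exactly of the type considered in Proposition \ref{bicyclic field}. First I would apply Proposition \ref{bicyclic field}(1) with this choice of $s,t$: since $s,t\in U_r$ we have $e_{0,s}=e_{0,t}=r$, hence $\beta=\min\{e_{0,s},e_{0,t}\}=r$, and the proposition gives $K_0(r+e_{s,t}-\text{(correction)})$ — more precisely $K_0(d)\subseteq F_{d,s,t}=K_s(d+e_{s,t}-r)K_t(d+e_{s,t}-r)$ together with the equality $F_{d,s,t}=K_0(d)K_s(d+e_{s,t}-r)=K_0(d)K_t(d+e_{s,t}-r)$ (this is the "moreover" clause, available precisely because $e_{0,s}=e_{0,t}$).

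The main work is then to descend from level $d+e_{s,t}-r$ to level $d$. I would argue as follows. Since $s\in U_r$, the field $K_0(d)K_s(d)$ is a cyclic extension of $K_0(d)$ (indeed $K_0\cap K_s=K_s(e_{0,s})=K_s(r)\subseteq K_0$, and by Lemma \ref{bicyclic galois group} the group $\Gal(K_0(d)K_s(d)/K_0(d))\simeq\Gal(K_s(d)/K_0(d)\cap K_s(d))=\Gal(K_s(d)/K_s(r))$, which is cyclic of order $p^{d-r}$). The same holds with $s$ replaced by $t$. So both $K_0(d)K_s(d)$ and $K_0(d)K_t(d)$ are the unique degree-$p^{d-r}$ subextension of $F_{d,s,t}/K_0(d)$ inside the larger cyclic extension $F_{d,s,t}/K_0(d)$ — here I use that $F_{d,s,t}/K_0(d)$ is itself cyclic: by the "moreover" clause $F_{d,s,t}=K_0(d)K_s(d+e_{s,t}-r)$, and as before $\Gal(F_{d,s,t}/K_0(d))\simeq\Gal(K_s(d+e_{s,t}-r)/K_s(r))$ is cyclic of order $p^{d+e_{s,t}-2r}$. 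Both $K_0(d)K_s(d)$ and $K_0(d)K_t(d)$ sit inside this cyclic extension with the same degree $p^{d-r}$ over $K_0(d)$, so by uniqueness of subextensions of a cyclic extension they are equal. This gives $K_0(d)K_s(d)=K_0(d)K_t(d)$ for all such $s,t$.

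For the "in particular" clause: applying Lemma \ref{generator of bicyclic} (or just iterating the equality just proved) shows $K_0(d)M_{U_r}(d)=\langle K_0(d)K_i(d)\rangle_{i\in U_r}$ is generated by any one $K_0(d)K_i(d)$ once we know they all coincide after adjoining $K_0(d)$ — but we must be a little careful: the equality $K_0(d)K_s(d)=K_0(d)K_t(d)$ was proved for $s\in U_r\setminus U'_r$ and $t\in U'_r$, so transitivity gives $K_0(d)K_i(d)$ is independent of $i\in U_r$ as long as both $U'_r$ and $U_r\setminus U'_r$ are nonempty (any two indices are linked through an index in the other set). Hence $K_0(d)M_{U_r}(d)=K_0(d)K_i(d)$ for every $i$, and in particular equals $K_0(d)M_{U'_r}(d)$ and $K_0(d)M_{U_r\setminus U'_r}(d)$.

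The step I expect to be the main obstacle is verifying cleanly that $F_{d,s,t}/K_0(d)$ is cyclic and that $K_0(d)K_s(d)$, $K_0(d)K_t(d)$ are its unique subextension of the prescribed degree — one must track the intersections $K_0(d)\cap K_s(d)$ etc. carefully (they all equal $K_s(r)=K_0(r)$ because $s\in U_r$) and invoke Lemma \ref{bicyclic galois group} in the right relative setting, over the base $K_0(d)$ rather than over $k$. Once the cyclicity is pinned down, the rest is the standard "subgroups of a cyclic group are determined by their order" argument.
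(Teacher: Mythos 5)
Your proof is correct and follows essentially the same route as the paper's: apply Proposition \ref{bicyclic field} to the pair $(s,t)$ to get $F_{d,s,t}=K_0(d)K_s(d+e_{s,t}-r)=K_0(d)K_t(d+e_{s,t}-r)$, then observe that $K_0(d)K_s(d)$ and $K_0(d)K_t(d)$ are subfields of the same degree of the cyclic extension $F_{d,s,t}/K_0(d)$ and hence coincide. The extra care you take in verifying the cyclicity over $K_0(d)$ via Lemma \ref{bicyclic galois group} and in tracking the intersections $K_0(d)\cap K_s(d)=K_s(r)$ is exactly the detail the paper leaves implicit.
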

\begin{proof}
Suppose  that $U'_r$ is nonempty and $U'_r \varsubsetneq U_r$. Let $t\in U'_r$ and $i\in U_r\setminus U_r'$.
By Proposition \ref{bicyclic field}, we have $K_0({d})\subseteq K_i({d+e_{i,t}-r})K_t({d+e_{i,t}-r})$, and
$K_0({d})K_i({d+e_{i,t}-r})=K_0({d})K_t({d+e_{i,t}-r})$.
Regard $K_0({d})K_t({d+e_{i,t}-r})$ as a cyclic extension of $K_0({d})$.
Then $K_0({d})K_i({d})$ and $K_0({d})K_t({d})$ are subfields of the same degree of the cyclic extension $K_0({d})K_t({d+e_{i,t}-r})$. Hence $K_0({d})K_i({d})=K_0({d})K_t({d})$ for all $t\in U'_r$ and all $i\in U_r\setminus U_r'$. As a consequence $K_0({d})M_{U_r}(d)=K_0({d})M_{U'_r}({d})=K_0({d})M_{U\setminus U'_r}({d})$.
\end{proof}

\medskip

\begin{proof}[Proof of Proposition \ref{projector alg_pachable}]
Suppose that $a_{x}=(a_1,...,a_m)\in G^0_\omega(K_0,K')$.
If $x=0$, then there is nothing to prove.
Hence in the following we assume $x\neq 0$.
Note that $a_1=0$.
Set  $\epsilon_0-d=\underset{i\in U_r}{\min}\{\delta (0,a_i)\}$
and $U_r'=\{i\in U_r|\epsilon_0-d=\delta(0,a_i)\}.$

Since $x\neq 0$, we have $d>r$.
It is enough to prove that $\Delta^\omega_r\geq d$, i.e. $d$ satisfies conditions (1) and (2) in Definition \ref{alg patching degree}.

Suppose that $U_{<r}$ is nonempty.
For any $s\in U_{<r}$ and $t\in U'_r$, we have $e_{s,t}=e_{0,s}$.
By  Proposition \ref{bicyclic field}, we have $K_0({d})\subseteq K_s({d})K_t({d})$.
For dimension reasons $K_s({d})K_t({d})=K_0({d})K_s({d})$. Hence $K_0({d})K_t({d})\subseteq K_0({d})K_s({d})$.
As $s$ and $t$ are arbitrary, we have $K_0({d})M_{U'_r}(d)\subseteq \underset{s\in U_{<r}}{\cap} K_0({d})K_s({d})$.
If $U_r=U'_r$, then we are done.
If not, then by Lemma \ref{M_U equal M_U'} we have $K_0({d})M_{U_r}(d)\subseteq \underset{s\in U_{<r}}{\cap} K_0({d})K_s({d})$.

Now suppose that $U_{>r}$ is not empty. For $s\in U_{\geq d}$, we have $K_s(d)=K_0(d)$.
Suppose that $U_{>r}\setminus U_{\geq d}$ is not empty. Let $s\in U_{>r}\setminus U_{\geq d}$ and $t\in U'_r$. Then $e_{s,t}=e_{0,t}$ and
 by Proposition \ref{bicyclic field}, we have $K_0({d})\subseteq K_s({d})K_t({d})=K_0({d})K_t({d})$.
Since $s$ and $t$ are arbitrary, by Lemma \ref{M_U equal M_U'} we have $K_0({d})M_{U_{>r}}(d)\subseteq \underset{t\in U_{r}}{\cap} K_0({d})K_t({d})$.
Therefore $\Delta^\omega_r\geq d$ and $x$ is algebraically patchable.
\end{proof}

Let $x\in G_\omega(K_0,K_{U_r})$ and denote by
$(\overline{I}_0,...,\overline{I}_{p^{\epsilon_0-r}-1})$ the partition of $U_r$ defined by $x$.
Recall that $\cS_x$ is the finite subset of $\Omega_k$ such that $\underset{n\in\ent/p^{\epsilon_0-r}\ent}{\bigcup} \Omega (\ol{I}_n)=\Omega_k\setminus \cS_x$. (See \S 2.)

\begin{defn}\label{subset S_r}
Suppose that $U_r$ is nonempty.
For a nonnegative integer $d\leq\Delta^\omega_r$ we define $\cS_{r}(d)$ and $\cS_{>r}(d)$ as follows.
\begin{enumerate}
  \item Suppose that $U_{>r}$ is nonempty.
   Define $\cS_{>r}(d)$ to be the set of places such that $(K_0(d)M_{U_{>r}}(d))^v$ is not locally cyclic. If $U_{>r}$ is empty, then set $\cS_{>r}(d)=\emptyset$.
  \item  Define  $\cS_{r}(d)$ to be the set of places such that $(K_0(d)M_{U_{r}}(d))^v$ is not locally cyclic.

\end{enumerate}

\end{defn}

Clearly $\cS_{>r}(d)$ and $\cS_r$ are finite sets.

\begin{prop}\label{alg_patch x}
Let $x\in \tilde{G}_\omega(K_0,K_{U_r})$ and $a_{x}$ be defined as in equation (\ref{definition of ax}).
Denote by $I(a_x)=(I_0,...,I_{p^{\epsilon_0}-1})$ the partition of $\cI$ defined by $a_x$.
Let  $d\leq\Delta^\omega_r$ be a nonnegative integer such that $x_i=0$ (mod $p^{\epsilon_0-d}$) for all $i\in U_r$.
Let $\cS=\cS_x\cup \cS_r(d)\cup \cS_{>r}(d)$.
Then
$\underset{n\in\ent/p^{\epsilon_0}\ent}{\bigcup} \Omega (I_n)\supseteq\Omega_k\setminus \cS$.
As a consequence $a_x\in G^0_{\omega}(K_0,K')$.
\end{prop}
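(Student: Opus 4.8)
The plan is to fix a place $v\in\Omega_k\setminus\cS$ and to produce an explicit $n\in\ent/p^{\epsilon_0}\ent$ with $v\in\Omega(I_n)$. Since $v\notin\cS_x$, there is $n_0\in\ent/p^{\epsilon_0-r}\ent$ with $v\in\Omega(\ol I_{n_0})$, where $(\ol I_0,\dots,\ol I_{p^{\epsilon_0-r}-1})$ is the partition of $U_r$ attached to $x$. I would first dispose of the case $\ol I_{n_0}=U_r$: then $x^v\in D_v$ (the diagonal over $k_v$), so at every $w\mid v$ the algebra $K_0\otimes_k K_i^w$ is split over $K_i^w$ for all $i\in U_r$, meaning $(K_0(\epsilon_0-\delta(0,n_0))M_{U_r}(\dots))^v$ has a controlled shape; combined with $v\notin\cS_{>r}(d)$ one reads off a diagonal value on $U_{>r}$ as well, and $a_i=0$ for $i\notin U_r$ forces the choice $n$ on $U_{<r}$ to be $0$. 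Treating $\ol I_{n_0}=U_r$ carefully is mostly bookkeeping, so I would state it and move on.

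The substantive case is $\ol I_{n_0}\subsetneq U_r$. Here I would lift $n_0$ to some $n\in\ent/p^{\epsilon_0}\ent$ (a choice to be pinned down below) and check the three families of indices separately. For $i\in U_r$: the definition of $\Omega(\ol I_{n_0})$ over $U_r$ gives, at each $w\mid v$ with $i\notin\ol I_{n_0}$, that $K_0\otimes_k K_i^w$ is a product of isomorphic fields of degree $\le p^{\delta(n_0,x_i)}$ over $K_i^w$; for $i\in\ol I_{n_0}$ it is fully split. So as long as $\pi_{\epsilon_0,\delta(n_0,x_i)}(n)=\pi_{e_i,\delta(n_0,x_i)}(x_i)$ and $\delta(n,0)\ge e_{0,i}$ where needed, one gets $v\in\Sigma_i^{\delta(n,a_i)}$ — i.e. $i$ behaves correctly for the partition $I(a_x)$. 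Because $x\in\tilde G_\omega$ is algebraically patchable, $\delta(0,x_i)\ge\epsilon_0-\Delta^\omega_r$ for all $i\in U_r$, and by hypothesis $x_i\equiv0\pmod{p^{\epsilon_0-d}}$ with $d\le\Delta^\omega_r$; this is exactly the room needed to choose a single $n$ that is simultaneously $\equiv0$ to high $p$-power order and agrees with each $x_i$ to the relevant precision. For $i\in U_{>r}$: I would use Proposition~\ref{bicyclic field} / Lemma~\ref{generator of bicyclic} together with $v\notin\cS_{>r}(d)$ — since $(K_0(d)M_{U_{>r}}(d))^v$ is locally cyclic and $K_0(d)M_{U_{>r}}(d)=K_0(d)K_j(d)$ for suitable $j\in U_r$ with the "splitting half" realized at $v$, Lemma~\ref{locally cyclic condition} forces $K_0(d)\otimes K_i^v$ to be split over $K_i^v$, so the coordinate $a_i=0$ is consistent with $v\in\Sigma_i^{\delta(n,0)}$ as required. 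For $i\in U_{<r}$: here $e_{0,i}<r\le d$, and Lemma~\ref{coro from def of Delta omega} gives $K_0(d)K_j(d)\subseteq K_0(d)K_i(d)$ for $j\in U_r$, so again the local-cyclicity input ($v\notin\cS_r(d)$) plus Lemma~\ref{locally cyclic condition} shows $K_0(d)\otimes K_i^v$ is split over $K_i^v$, matching $a_i=0$.

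The key numeric point — the one place where all the earlier lemmas converge — is the choice of the lift $n$. I would set $n$ to be the unique element of $\ent/p^{\epsilon_0}\ent$ dominating $n_0\in\ent/p^{\epsilon_0-r}\ent$ that is divisible by $p^{\epsilon_0-d}$ (this is where $d\le\Delta^\omega_r$ and the divisibility $x_i\equiv0\pmod{p^{\epsilon_0-d}}$ are used to guarantee compatibility), and then verify $I_{n_0}(x)\subseteq I_n(a_x)$ and $\delta(n_0,x_i)\le\delta(n,a_i)$ for $i\notin I_n(a_x)$, exactly the two conditions that give $\Omega(\ol I_{n_0})\subseteq\Omega(I_n(a_x))$ in the spirit of Lemma~\ref{definition of a'}. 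Once $v\in\Omega(I_n)$ is established for all $v\notin\cS$, the displayed inclusion $\bigcup_n\Omega(I_n)\supseteq\Omega_k\setminus\cS$ follows, hence $\cS_{a_x}\subseteq\cS$ is finite and $a_x\in G_\omega^0(K_0,K')$ (the first coordinate being $a_1=0$ by construction of $a_x$).

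I expect the main obstacle to be the bookkeeping that ties together the three index ranges at a single place $v$: one must check that the \emph{same} integer $n$ works simultaneously for $U_r$, $U_{>r}$ and $U_{<r}$, which forces the precise definition of $\Delta^\omega_r$ and the algebraic-patchability hypothesis to be used in a coordinated way. The local-global step (passing from "split after removing a finite set $\cS$" to an honest field containment, invoked to define $\cS_r(d)$ and $\cS_{>r}(d)$ as genuinely finite) is the only non-combinatorial ingredient, and it is already packaged in Lemma~\ref{K_0(d) splits} and Proposition~\ref{bicyclic field}, so the proof should be essentially a careful chase through those results rather than anything new.
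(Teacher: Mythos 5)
Your proposal assembles the right ingredients (the three index ranges $U_r$, $U_{>r}$, $U_{<r}$, the field inclusions coming from $d\leq\Delta^\omega_r$, and Lemma \ref{locally cyclic condition}), but there is a genuine gap at the point where you conclude that $K_0(d)\otimes_k K_s^v$ splits over $K_s^v$ for $s\notin U_r$. Lemma \ref{locally cyclic condition} only gives a dichotomy: at a place where the relevant bicyclic field is locally cyclic, \emph{either} $K_0(d)$ splits over $K_s(d)^v$ \emph{or} it splits over $K_t(d)^v$ for the companion $t\in U_r$. To land on the branch you need, you must exhibit an index $t\in U_r$ at which $K_0(d)\otimes_{k_v}K_t(d)^v$ is \emph{not} split, and your case division (on whether $\ol{I}_{n_0}=U_r$) never produces such a witness: $i\notin\ol{I}_{n_0}$ only gives $v\in\Sigma_i^{\delta(n_0,x_i)}$, which is perfectly compatible with full splitting at $i$. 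The paper instead splits on whether $v\in\Omega(I_0(a_x))$; if yes one simply takes $n=0$, and if no one gets precisely a $t\in U_r$ with $v\notin\Sigma_t^{\delta(0,x_t)}\supseteq\Sigma_t^{\epsilon_0-d}$, which is the non-split witness that resolves the dichotomy for every $s\notin U_r$ simultaneously. Your phrase ``with the splitting half realized at $v$'' acknowledges the issue but does not close it. (Also, in your case $\ol{I}_{n_0}=U_r$, the inference that $K_0\otimes_k K_i^w$ is split for all $i\in U_r$ is false: $\ol{I}_{n_0}=U_r$ only says that $x$ is diagonal on $U_r$, not that $v\in\Sigma_i^0$.)

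A second, related problem is the choice of the lift $n$. Since $d>r$ in the nontrivial case, all lifts of $n_0\in\ent/p^{\epsilon_0-r}\ent$ to $\ent/p^{\epsilon_0}\ent$ agree modulo $p^{\epsilon_0-d}$, so there is no freedom to ``choose the lift divisible by $p^{\epsilon_0-d}$'': either every lift is, or none is, and this is a property of $n_0$ that has to be proved rather than arranged. The paper derives it from the same witness $t$: from $v\notin\Sigma_t^{\epsilon_0-d}$ together with $v\in\Omega(\ol{I}_{\ol{n}})$ one gets $\delta(n,x_t)>\epsilon_0-d$, and combined with $\delta(0,x_t)\geq\epsilon_0-d$ this forces $\delta(n,0)\geq\epsilon_0-d$, which is exactly what makes the coordinates $a_i=0$ for $i\notin U_r\cup I_n$ consistent with $v\in\Sigma_i^{\delta(n,a_i)}$. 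Once these two points are repaired --- reorganizing the case split around $v\in\Omega(I_0)$ and extracting the non-split witness $t$ --- the rest of your outline matches the paper's argument.
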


\begin{proof}
If $d=r$, then clearly $a_{x}=0 \in G^0_\omega(K_0,K')$.
Hence we assume $d>r$.

We claim that $\underset{n\in\ent/p^{\epsilon_0}\ent}{\bigcup} \Omega (I_n)\supseteq\Omega_k\setminus \cS$.
Set $\Omega_S=\Omega_k\setminus \cS$.
If $\Omega_S\subseteq\Omega(I_0)$, then our claim is clear.
Suppose not.
Let $v\in\Omega_S\setminus \Omega(I_0)$.
Our aim is to find $n\neq 0$ such that $v\in\Omega(I_n)$.
Since $v\notin \Omega(I_0)$, there is $t\in U_r\setminus I_0$ such that $v\notin \Sigma_t^{\delta(0,x_t)}$.
As $x_t=0$ (mod $p^{\epsilon_0-d}$), we have $\delta(0,x_t)\geq \epsilon_0-d$.
Then
$K_0(d)^v\otimes_{k_v} K_t(d)^v$ is not a product of copies of $K_t(d)^v$.

Suppose that $U_{>r}$ is nonempty.
By  the choice of $d$ and $\cS$, we have $M_{U_{>r}}(d)\subseteq\underset{i\in U_{r}}{\cap} K_0({d})K_i({d})$ and $(K_0({d})M_{U_{>r}}(d))^v$ is a product of cyclic extensions of $k_v$.
Hence for $s\in U_{>r}$
we have $e_{s,t}=e_{0,t}$
and $K_0(d)K_s(d)\subseteq K_0(d)K_t(d)=K_s(d)K_t(d)$.
As $(K_0({d})K_s(d))^v$ is a product of cyclic extensions of $k_v$, by Lemma \ref{locally cyclic condition}  $K_0(d)^v\otimes_{k_v} K_s(d)^v$ is a product of copies of $K_s(d)^v$.

Suppose that $U_{<r}$ is not empty.
Let $s\in U_{<r}$.
As $K_0(d)K_t(d)\subseteq K_0(d)K_s(d)=K_s(d)K_t(d)$ for all $s\in U_{<r}$, by Lemma \ref{locally cyclic condition}
$K_0(d)^v\otimes_{k_v} K_s(d)^v$ is a product of copies of $K_s(d)^v$.
Hence $v\in \underset{s\notin U_r}\cap\Sigma^{\epsilon_0-d}_{s}$.

Denote  by
$(\overline{I}_0,...,\overline{I}_{p^{\epsilon_0-r}-1})$ the partition of $U_r$ defined by $x$.
By the definition of $\cS_x$, we have $\underset{\overline{n}\in\ent/p^{\epsilon_0-r}\ent}{\bigcup} \Omega (\overline{I}_{\overline{n}})=\Omega_k\setminus \cS_x.$
Hence there is $\overline{n}\neq 0$ such that $v\in \Omega(\overline{I}_{\overline{n}})$. Let $n\in \ent/p^{\epsilon_0}\ent$ such that $\pi_{\epsilon_0,\epsilon_0-r}(n)=\overline{n}$. Then $\delta(n,x_i)=\delta(\overline{n},x_i)$ for all $i\in U_r$.
We claim that $v\in\Omega(I_n)$.

First note that $\delta(n,x_t)>\epsilon_0-d.$ To see this, we first suppose that $t\in I_n$.
Then $\delta(n,x_t)=\epsilon_0-r>\epsilon_0-d$ by the assumption of $d$.
Suppose that $t\notin I_n$. As $v\notin \Sigma_t^{\epsilon_0-d}$ and $v\in\Omega(\ol{I}_{\ol{n}})$, we have $\delta(n,x_t)>\epsilon_0-d$.

As $\delta(n,x_t)>\epsilon_0-d$ and $\delta(0,x_t)\geq\epsilon_0-d$, we have $\delta(n,0)\geq\epsilon_0-d$.
Let $i\notin U_r\cup I_n$.
Then $a_i=0$ and $e_i>\epsilon_0-d$.
Therefore $\delta(n,a_i)=\delta(n,\pi_{e_1,e_i}(0))\geq\epsilon_0-d$ for all $i\notin U_r\cup I_n$.

Since $v\in \underset{s\notin U_r}\cap\Sigma^{\epsilon_0-d}_{s}$ and $\delta(n,a_i)\geq \epsilon_0-d$ for all $i\notin U_r\cup I_n$, we have $v\in \underset{s\notin U_r\cup I_n}\cap\Sigma^{\delta(a_s,n)}_{s}$.
Combining this with the fact that $v\in \Omega({\overline{I}}_{\overline{n}})$, we have $v\in \Omega({I}_{n})$.

As $x\in \tilde{G}_\omega(K_0,K_{U_r})$, the set $S$ is finite.
Hence $a_x\in G^0_\omega(K_0,K')$.
The proposition then follows.

\end{proof}

\subsection{Patching degrees}
\begin{defn}\label{patching degree}
Define the patching degree $\Delta_r$ of $U_r$ to be the maximum nonnegative integer  $d\leq\Delta^\omega_r$
satisfying the following:
\begin{enumerate}
  \item If $U_{>r}$ is nonempty, then  the field $K_0(d)M_{U_{>r}}(d)$ is locally cyclic.
  \item If $U_{<r}$ is nonempty, then the  field $K_0(d)M_{U_{r}}(d)$ is locally cyclic.
\end{enumerate}
If $U_r=\cI$, then we set $\Delta_r=\epsilon_0$.
\end{defn}

Note that  $K_0(r)=K_i(r)$ for all $i\in U_{\geq r}$. Hence by definition we have

From the definition of $\Delta_r$, we have $\Delta^\omega_r\geq\Delta_r\geq r$.

\begin{prop}\label{bound on Delta}
Suppose that $U_{>r}$ is nonempty.
Let $r'$ be the smallest positive integer bigger than $r$ such that $U_{r'}$ is nonempty.
Then we have the following:
\begin{enumerate}
 \item If $r=0$, then $\Delta_r=\Delta_{r'}$.
 \item $\Delta_r\leq\Delta_{r'}$.
 \item $\Delta_{r'}-r'\leq\Delta_{r}-r$.
\end{enumerate}

\end{prop}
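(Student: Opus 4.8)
The statement to prove is Proposition \ref{bound on Delta}, the exact analogue for the patching degrees $\Delta_r$ of Proposition \ref{bound on Delta omega} for the algebraic patching degrees $\Delta^\omega_r$. The strategy is to mimic the proof of Proposition \ref{bound on Delta omega} step by step, using $\Delta_r$ in place of $\Delta^\omega_r$, and adding the local-cyclicity bookkeeping wherever a containment of fields was the whole content before. The key structural facts I will lean on are: Lemma \ref{generator of bicyclic} (a bicyclic composite $K_0(d)M_C(d)$ equals $K_0(d)K_j(d)$ for a suitable single index $j$), Lemma \ref{subfield of bicyclic} (a cyclic subfield of $K_iK_j$ descends to $K_i(\cdot)K_j(\cdot)$), Lemma \ref{coro from def of Delta omega} applied at the integer $\Delta_r$ (legitimate since $\Delta_r\le\Delta^\omega_r$), and the trivial but crucial observation that a subfield of a locally cyclic field is locally cyclic.

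For statement (2), $\Delta_r\le\Delta_{r'}$: set $d=\Delta_r$. Since $U_{<r'}=U_{\le r}$ is nonempty, Lemma \ref{coro from def of Delta omega} gives $K_0(d)K_i(d)\subseteq\bigcap_{j\in U_{<r'}}K_0(d)K_j(d)$ for all $i\in U_r$, so $M_{U_{r'}}(d)\subseteq\bigcap_{j\in U_{<r'}}K_0(d)K_j(d)$; thus $d$ satisfies condition (2) of Definition \ref{patching degree} \emph{for $r'$} at the algebraic level, and it remains only to check the local-cyclicity conditions. The field $K_0(d)M_{U_{r'}}(d)$ equals $K_0(d)K_j(d)$ for a suitable $j\in U_{r'}$ by Lemma \ref{generator of bicyclic}, and $K_0(d)K_j(d)\subseteq K_0(d)K_i(d)$ for some $i\in U_r$ (again Lemma \ref{coro from def of Delta omega}), hence $K_0(d)M_{U_{r'}}(d)$ is a subfield of the locally cyclic field $K_0(d)K_i(d)$, so it is locally cyclic. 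For condition (1) for $r'$ (when $U_{>r'}$ is nonempty): $U_{>r'}\subseteq U_{>r}$, so $M_{U_{>r'}}(d)\subseteq M_{U_{>r}}(d)$ and $K_0(d)M_{U_{>r'}}(d)\subseteq K_0(d)M_{U_{>r}}(d)$, which is locally cyclic by the definition of $\Delta_r$; the algebraic containment $M_{U_{>r'}}(d)\subseteq\bigcap_{i\in U_{r'}}K_0(d)K_i(d)$ follows exactly as in the proof of Proposition \ref{bound on Delta omega} (via Lemma \ref{generator of bicyclic}: $K_0(d)M_{U_{>r}}(d)=K_0(d)K_j(d)$ for $j\in U_{r'}$). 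Hence $\Delta_{r'}\ge d=\Delta_r$.

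For statement (1), $r=0$ case: by (2) we have $\Delta_0\le\Delta_{r'}$, so I only need the reverse inequality, i.e. that $d:=\Delta_{r'}$ satisfies the defining conditions of $\Delta_0$. Here $U_{<r'}=U_0$ and $U_{>0}=U_{\ge r'}$, so the relevant field is $K_0(d)M_{U_{\ge r'}}(d)$; by the definition of $\Delta_{r'}$ one has $M_{U_{>r'}}(d)\subseteq\bigcap_{i\in U_{r'}}K_0(d)K_i(d)$ and $K_0(d)K_j(d)\subseteq\bigcap_{i\in U_0}K_0(d)K_i(d)$ for $j\in U_{r'}$, giving the algebraic containment $M_{U_{\ge r'}}(d)\subseteq\bigcap_{i\in U_0}K_0(d)K_i(d)$, exactly as in Proposition \ref{bound on Delta omega}(1). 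For local cyclicity: $K_0(d)M_{U_{\ge r'}}(d)=K_0(d)M_{U_{r'}}(d)\cdot M_{U_{>r'}}(d)$; by Lemma \ref{generator of bicyclic} and the containments above this is a subfield of $K_0(d)K_i(d)$ for a single $i\in U_0$, hence (being contained in a locally cyclic field by the definition of $\Delta_{r'}$ applied with $U_{<r'}$) it is locally cyclic. Thus $d$ satisfies condition (1) of Definition \ref{patching degree} for $r=0$, giving $\Delta_0\ge\Delta_{r'}$.

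For statement (3), $\Delta_{r'}-r'\le\Delta_r-r$ when $r>0$: I argue by contradiction, following the last part of the proof of Proposition \ref{bound on Delta omega} verbatim but carrying local cyclicity along. Assuming $\Delta_{r'}-r'>\Delta_r-r$, the same degree count produces, for any $i\in U_r$, a field $F=\bigcap_{j\in U_{<r'}}K_0(\Delta_{r'})K_j(\Delta_{r'})$ which is a cyclic extension of $K_0(\Delta_{r'})$ of degree $p^{\gamma-r}$ with $\gamma\ge\Delta_r+1$, whence $K_i(\Delta_r+1)\subseteq F\subseteq K_0(\Delta_r+1)K_j(\Delta_r+1)$ for all $j\in U_{<r}$ by Lemma \ref{subfield of bicyclic}; and likewise $M_{U_{\ge r'}}(\Delta_{r'})\subseteq\bigcap_{i\in U_r}K_0(\Delta_{r'})K_i(\Delta_{r'})$. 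To conclude that $\Delta_r+1$ satisfies \emph{all} conditions defining $\Delta_r$, I must additionally check that $K_0(\Delta_r+1)M_{U_{>r}}(\Delta_r+1)$ and $K_0(\Delta_r+1)M_{U_r}(\Delta_r+1)$ are locally cyclic. Each is contained in the corresponding field at level $\Delta_{r'}$ — for the second, via $K_0(\Delta_r+1)M_{U_r}(\Delta_r+1)\subseteq F\subseteq K_0(\Delta_{r'})K_j(\Delta_{r'})$, which is locally cyclic by the definition of $\Delta_{r'}$; for the first, $K_0(\Delta_r+1)M_{U_{>r}}(\Delta_r+1)\subseteq K_0(\Delta_{r'})M_{U_{\ge r'}}(\Delta_{r'})$, and using $U_{>r}=U_{\ge r'}$ this equals $K_0(\Delta_{r'})M_{U_{r'}}(\Delta_{r'})M_{U_{>r'}}(\Delta_{r'})$, which by Lemma \ref{generator of bicyclic} and the containments just established sits inside a locally cyclic field coming from the definition of $\Delta_{r'}$. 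Hence $\Delta_r+1\le\Delta_r$, a contradiction.

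**Expected main obstacle.** The only delicate point — and the one place where $\Delta_r$ genuinely behaves differently from $\Delta^\omega_r$ — is making sure that every field I want to be ``locally cyclic'' really is realized as a \emph{subfield} of a known locally cyclic field, rather than merely a composite that could a priori fail to be bicyclic. The honest work is in steps (1) and (3), matching dimensions so that Lemma \ref{generator of bicyclic} applies and the field $K_0(\cdot)M_{U_?}(\cdot)$ collapses to a single $K_0(\cdot)K_j(\cdot)$; once that is in hand, ``subfield of locally cyclic is locally cyclic'' does the rest and the argument is parallel to Proposition \ref{bound on Delta omega}.
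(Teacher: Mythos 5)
Your overall strategy is the paper's: reduce to Proposition \ref{bound on Delta omega} for the algebraic containments and then propagate local cyclicity via ``a subfield of a locally cyclic field is locally cyclic.'' Parts (1) and (2) and the skeleton of (3) match the paper's proof. But there is a recurring flaw in how you certify local cyclicity: you repeatedly attribute it to fields that Definition \ref{patching degree} does not control. The definition of $\Delta_s$ only guarantees that $K_0(d)M_{U_{>s}}(d)$ is locally cyclic (when $U_{>s}\neq\emptyset$) and that $K_0(d)M_{U_s}(d)$ is locally cyclic (when $U_{<s}\neq\emptyset$); it says nothing about $K_0(d)K_i(d)$ for $i\in U_{<s}$ or $i\in U_s$ with $U_{<s}=\emptyset$, nor about the intersection $F=\bigcap_{j\in U_{<r'}}K_0(\Delta_{r'})K_j(\Delta_{r'})$. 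In (2) and (1) this is harmless: the needed fields $K_0(\Delta_r)M_{U_{r'}}(\Delta_r)$ and $K_0(\Delta_{r'})M_{U_{\geq r'}}(\Delta_{r'})$ are directly subfields of $K_0(\Delta_r)M_{U_{>r}}(\Delta_r)$, resp.\ equal to $K_0(\Delta_{r'})M_{U_{r'}}(\Delta_{r'})$, which the definition does cover, so your conclusions stand even though the intermediary $K_0(d)K_i(d)$ you name is not known to be locally cyclic.

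In (3) the slip is genuine. You conclude with the chain $K_0(\Delta_r+1)M_{U_r}(\Delta_r+1)\subseteq F\subseteq K_0(\Delta_{r'})K_j(\Delta_{r'})$, ``locally cyclic by the definition of $\Delta_{r'}$.'' If $j\in U_{<r'}$ the containment $F\subseteq K_0(\Delta_{r'})K_j(\Delta_{r'})$ is true but that field's local cyclicity is not given by any definition; if $j\in U_{r'}$ the field is locally cyclic but the containment is reversed ($K_0(\Delta_{r'})K_j(\Delta_{r'})\subseteq F$, and $F$, of degree $p^{\gamma-r}$ over $K_0(\Delta_{r'})$ with $\gamma-r$ possibly larger than $\Delta_{r'}-r'$, need not be locally cyclic). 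The missing step — which is exactly what the paper's proof supplies — is a degree comparison inside a cyclic extension: since $F/K_0(\Delta_{r'})$ is cyclic, its subfields are totally ordered, and under the contradiction hypothesis $[K_0(\Delta_{r'})K_i(\Delta_r+1):K_0(\Delta_{r'})]=p^{\Delta_r+1-r}\leq p^{\Delta_{r'}-r'}=[K_0(\Delta_{r'})K_j(\Delta_{r'}):K_0(\Delta_{r'})]$ for $i\in U_r$, $j\in U_{r'}$; hence $K_0(\Delta_{r'})M_{U_r}(\Delta_r+1)\subseteq K_0(\Delta_{r'})K_j(\Delta_{r'})\subseteq K_0(\Delta_{r'})M_{U_{r'}}(\Delta_{r'})$, and the last field is locally cyclic by condition (2) of Definition \ref{patching degree} for $r'$ (applicable since $U_{<r'}=U_{\leq r}\neq\emptyset$). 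With that step inserted, your argument coincides with the paper's.
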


\begin{proof}
We first show that $\Delta_r\leq\Delta_{r'}$.
Note that by our choice of $r'$, we have $U_{<r'}=U_{\leq r}$, which is nonempty.
By Proposition \ref{bound on Delta omega} (2), we  $\Delta_r\leq\Delta^\omega_r\leq\Delta^\omega_{r'}$.

Suppose that $U_{>r'}$ is nonempty.
By the definition of $\Delta_r$ the field $K_0({\Delta_r})M_{U_{>r}}(\Delta_r)$ is locally cyclic.
Hence $K_0({\Delta_r})M_{U_{r'}}(\Delta_r)$ and $K_0({\Delta_r})M_{U_{>r'}}(\Delta_r)$ are locally cyclic.
Therefore $\Delta_{r'}\geq\Delta_r$.

If $r=0$, then $U_{<r'}=U_0$.
By Proposition \ref{bound on Delta omega} (1), we have $\Delta_{r'}\leq \Delta_{r'}^\omega=\Delta_0^\omega$.
Since $K_0(\Delta_{r'})M_{U_{>0}}(\Delta_{r'})=K_0(\Delta_{r'})M_{U_{\geq r'}}(\Delta_{r'})=K_0(\Delta_{r'})M_{U_{r'}}(\Delta_{r'})$ is locally cyclic,
we have $\Delta_{r'}\leq\Delta_0$, which proves statement $(1)$.

Now suppose that $r>0$. We claim that $\Delta_{r'}-r'\leq\Delta_r-r$.
Suppose not. Then   $\Delta_{r'}-r'\geq\Delta_r+1-r$.
Combining with Proposition \ref{bound on Delta omega} (3) we get $\Delta_r^\omega\geq\Delta_r+1$.

Let $i\in U_r$ and $j\in U_{r'}$.
By the definition of $\Delta_{r'}$ we have
$K_0(\Delta_{r'})K_j(\Delta_{r'})\subseteq K_0(\Delta_{r'})K_i(\Delta_{r'})$.
Regard $K_0(\Delta_{r'})K_i({\Delta_{r'}})$ as
a cyclic extension of $K_0({\Delta_{r'}})$.
Since $K_0(\Delta_{r'})K_j(\Delta_{r'})$ and $K_0(\Delta_{r'})K_i(\Delta_{r}+1)$ are both subfields of $K_0(\Delta_{r'})K_i(\Delta_{r'})$, for dimension reasons
$K_0(\Delta_{r'})K_i(\Delta_{r}+1)\subseteq K_0(\Delta_{r'})K_j(\Delta_{r'})$.

Since $K_0(\Delta_{r'})M_{U_{r'}}(\Delta_{r'})$ is locally cyclic, its subfield $K_0(\Delta_{r}+1)M_{U_r}(\Delta_{r}+1)$ is also locally cyclic.

As $K_0(\Delta_{r'})M_{U_{>r}}(\Delta_{r'})=K_0(\Delta_{r'})M_{U_{r'}}(\Delta_{r'})$ and $K_0(\Delta_{r'})M_{U_{r'}}(\Delta_{r'})$ is locally cyclic, its subfield
$K_0(\Delta_{r}+1)M_{U_{>r}}(\Delta_{r}+1)$ is locally cyclic.
Hence
$\Delta_r\geq\Delta_r+1$, which is a contradiction.
Therefore $\Delta_{r'}-r'\leq \Delta_r-r$.

\end{proof}


\begin{defn}\label{patchable}
Suppose that $U_r$ is nonempty.
Let $x \in \tilde{G}_\omega(K_0,K_{U_r})$. We say that $x$ is \emph{patchable} if  $x_i=0$ (mod $p^{\epsilon_0-\Delta_r}$) for all $i\in U_r$. The subgroup consisting of all patchable elements of $\tilde{G}_\omega(K_0,K_{U_r})$ is called \emph{the patchable subgroup} of $G(K_0,K_{U_r})$. We denote by $\tilde{G}(K_0,K_{U_r})$ the patchable subgroup of $G(K_0,K_{U_r})$.
\end{defn}

Note that if $U_0=\cI$, then  by above definition every element of $G^0(K_0,K')$ is \emph{patchable}.

Hence in the rest of this section we fix an $r$ such that $U_r$ is nonempty and $U_r\neq \cI$ unless we state otherwise explicitly.

In the following we show that $x$ is patchable if and only if $a_x$ defined in equation (\ref{definition of ax}) is
in $G^0(K_0,K')$.

\begin{prop}\label{projector_pachable}
Let $a_{x}$ be defined as in equation (\ref{definition of ax}).
If $a_{x}\in G^0(K_0,K')$, then $x$ is patchable.
\end{prop}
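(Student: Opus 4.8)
The strategy is to prove the contrapositive-adjacent statement directly: assuming $a_x \in G^0(K_0,K')$, I will show that $x_i \equiv 0 \pmod{p^{\epsilon_0-\Delta_r}}$ for every $i \in U_r$. Since $G^0(K_0,K') \subseteq G^0_\omega(K_0,K')$, Proposition \ref{projector alg_pachable} already gives that $x$ is algebraically patchable, i.e. $\delta(0,x_i) \geq \epsilon_0 - \Delta^\omega_r$ for all $i \in U_r$; equivalently, setting $\epsilon_0 - d = \min_{i \in U_r}\{\delta(0,x_i)\}$, we have $d \leq \Delta^\omega_r$. The whole point is therefore to sharpen $d \leq \Delta^\omega_r$ to $d \leq \Delta_r$, which is exactly the statement that the field $K_0(d)M_{U_{>r}}(d)$ (if $U_{>r}\neq\emptyset$) and the field $K_0(d)M_{U_r}(d)$ (if $U_{<r}\neq\emptyset$) are both locally cyclic.

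The first reduction is the trivial case $x = 0$ (then $a_x = 0 \in G^0$ and the conclusion holds vacuously), and also the case $U_0 = \cI$, which is excluded by the standing assumption for the rest of the section. So assume $x \neq 0$, hence $d > r$. Apply Lemma \ref{M_U equal M_U'} (with $a_1 = 0$): with $U'_r = \{i \in U_r : \epsilon_0 - d = \delta(0,a_i)\}$, which is nonempty by the choice of $d$, we get $K_0(d)M_{U_r}(d) = K_0(d)M_{U'_r}(d)$, so it suffices to control the fields built from indices in $U'_r$. Now fix $v \in \Omega_k$; since $a_x \in G(K_0,K')$ (not just $G_\omega$), Lemma \ref{K_0(d) splits} applies with $\cS = \emptyset$: for $s \in U_r$ with $\delta(0,a_s) > \epsilon_0 - d$ and $t \in U'_r$, at \emph{every} place $v$ either $K_0(d)\otimes K_s^v$ is a product of copies of $K_s^v$, or $K_0(d)\otimes K_t^v$ is a product of copies of $K_t^v$. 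Then invoke Proposition \ref{bicyclic field}(2): for $u = \max\{s,t\}$ the field $K_0(d)K_u(d+e_{s,t}-\beta)$ is locally cyclic, and in particular its subfield $K_0(d)K_t(d)$ (or $K_0(d)K_s(d)$) is locally cyclic at $v$. Ranging over all $v$, the composites $K_0(d)K_i(d)$ for $i \in U'_r$ are locally cyclic, and since $K_0(d)M_{U'_r}(d)$ is obtained by repeatedly composing such bicyclic pieces inside a single cyclic extension of $K_0(d)$ (using Lemma \ref{generator of bicyclic}, exactly as in the proof of Proposition \ref{projector alg_pachable}), $K_0(d)M_{U_r}(d) = K_0(d)M_{U'_r}(d)$ is locally cyclic.

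For the condition involving $U_{>r}$: by the definition of $\Delta^\omega_r \geq d$ we already know $M_{U_{>r}}(d) \subseteq \bigcap_{i\in U_r} K_0(d)K_i(d)$, so $K_0(d)M_{U_{>r}}(d)$ is a subfield of $K_0(d)K_i(d)$ for any $i \in U_r$; taking $i \in U'_r$ and using that $K_0(d)K_i(d)$ was just shown to be locally cyclic, we conclude $K_0(d)M_{U_{>r}}(d)$ is locally cyclic. Combining both parts, $d$ satisfies conditions (1) and (2) of Definition \ref{patching degree}, hence $d \leq \Delta_r$. Finally, by the choice of $d$ we have $\delta(0,x_i) \geq \epsilon_0 - d \geq \epsilon_0 - \Delta_r$ for all $i \in U_r$, i.e. $x_i \equiv 0 \pmod{p^{\epsilon_0 - \Delta_r}}$, so $x$ is patchable.

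\textbf{Main obstacle.} The delicate point is making sure Proposition \ref{bicyclic field}(2) is genuinely available: it requires $a \in G(K_0,K')$ (not merely $G_\omega$) in order to take the exceptional set empty, and it produces local cyclicity of $K_0(d)K_u(d+e_{s,t}-\beta)$ for the \emph{particular} index $u = \max\{s,t\}$ — I will need to check, as in the $e_{0,s} \le e_{0,t}$ / $e_{0,s} \ge e_{0,t}$ case split already carried out in that proof, that in either case the relevant field $K_0(d)K_t(d)$ or $K_0(d)K_s(d)$ sits inside a locally cyclic field and is therefore itself locally cyclic. The bookkeeping of which of $s,t$ lies in $U'_r$ versus $U_r \setminus U'_r$, and the reduction via Lemma \ref{M_U equal M_U'} that lets me ignore $U_r \setminus U'_r$ entirely, is where the argument could go wrong if one is not careful; everything else is a direct transcription of the algebraic-patchability argument with "product of cyclic extensions" replacing "locally cyclic at almost all $v$" by "locally cyclic at all $v$".
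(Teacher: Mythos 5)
Your overall strategy is the paper's: reduce to checking that $d$ (where $\epsilon_0-d=\min_{i\in U_r}\delta(0,x_i)$) satisfies conditions (1) and (2) of Definition \ref{patching degree}, with Proposition \ref{projector alg_pachable} supplying $d\le\Delta^\omega_r$ and Proposition \ref{bicyclic field}(2) supplying local cyclicity. But there is a genuine gap in how you feed Proposition \ref{bicyclic field}(2): you insist on taking the auxiliary index $s$ \emph{inside} $U_r$, namely $s\in U_r$ with $\delta(0,a_s)>\epsilon_0-d$, i.e.\ $s\in U_r\setminus U'_r$. Such an $s$ need not exist: whenever every nonzero coordinate of $x$ realizes the minimal value $\epsilon_0-d$ one has $U'_r=U_r$ (this happens already for $|U_r|=1$, and for the generators $x_{U_r}$ with all coordinates equal), and then your argument produces no admissible pair $(s,t)$ at all, hence no proof that any $K_0(d)K_t(d)$ with $t\in U'_r$ is locally cyclic. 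Since condition (2) (resp.\ (1)) still has content in that case as soon as $U_{<r}$ (resp.\ $U_{>r}$) is nonempty, the proof is incomplete exactly where it matters.

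The paper avoids this by choosing $s$ \emph{outside} $U_r$: for condition (2) it takes $s\in U_{<r}$ and $t\in U'_r$ (then $a_s=0$, so $\delta(a_1,a_s)=e_s>\epsilon_0-d$ automatically, $u=\max\{s,t\}=t$ and $\beta=e_{0,s}=e_{s,t}$, so Proposition \ref{bicyclic field}(2) gives local cyclicity of $K_0(d)K_t(d)=K_0(d)M_{U_r}(d)$); for condition (1) it takes $s\in U_{>r}\setminus U_{\ge d}$ with $K_0(d)M_{U_{>r}}(d)=K_0(d)K_s(d)$ and $t\in U'_r$, obtaining local cyclicity of $K_0(d)K_s(d)$ directly rather than deducing it from a containment in some $K_0(d)K_i(d)$ with $i\in U'_r$. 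These choices are available precisely when the corresponding condition is non-vacuous, which is what makes the case analysis close. Your reduction via Lemma \ref{M_U equal M_U'} and the composition step via Lemma \ref{generator of bicyclic} are fine; only the selection of $s$ needs to be corrected.
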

\begin{proof}
Suppose that $a_{x}=(a_1,...,a_m)\in G^0(K_0,K')$.
If $x=0$, then there is nothing to prove.
Hence in the following we assume $x\neq 0$.
By the definition of $a_x$ we have $a_1=0$ and $\cI\setminus U_r\subseteq I_{0}(a_x)$.
Set  $\epsilon_0-d=\underset{i\in U_r}{\min}\{\delta(0,a_i)\}$,
and $U_r'=\{i\in U_r|\epsilon_0-d=\delta(0,a_i)\}.$
As $x\neq 0$, we have $d>r$ and $a_x\notin D$.

By Proposition \ref{projector alg_pachable}, we have that $x$ is algebraically patchable,
i.e. $\Delta^\omega_r\geq d.$
Since $\Delta^\omega_r\geq d$, it is enough to prove that  $d$ satisfies condition (1) and (2) in Definition \ref{patching degree}.

Suppose that $U_{<r}$ is nonempty.
Let $s\in U_{<r}$ and $t\in U'_r$.
Then $e_{s,t}=e_{0,s}$.
Since $\Delta^\omega_r\geq d$, the field $K_0({d})M_{U_{r}}(d)$ is contained in $K_0({d})K_s({d})$.
By Lemma \ref{generator of bicyclic}, the field $K_0({d})M_{U_{r}}(d)$ is equal to  $K_0({d})K_t({d})$.
By  Proposition \ref{bicyclic field}, we have $K_0({d})K_t({d})$ is locally cyclic.
As $K_0({d})K_t({d})$ is locally cyclic, the field $K_0({d})M_{U_{r}}(d)$ is locally cyclic.

Now suppose that $U_{>r}$ is not empty.
For $s\in U_{\geq d}$, we have $K_s(d)=K_0(d)$.
If $U_{>r}\neq U_{\geq d}$, then there is $s\in U_{>r}\setminus U_{\geq d}$ such that
$K_0({d})M_{U_{>r}}(d)=K_0({d})K_s(d)$.
Let  $t\in U'_r$.
Then $e_{s,t}=e_{0,t}$.
Again by Proposition \ref{bicyclic field}, we have $K_0({d})K_s({d})$ is locally cyclic.
Hence $K_0({d})M_{U_{>r}}(d)$ is locally cyclic.
Therefore $\Delta_r\geq d$ and $x$ is patchable.

\end{proof}

Now we prove the converse.
\begin{prop}\label{patch}
Let $x\in\tilde{G}(K_0,K_{U_r})$ and $a_{x}$ be defined as in the equation (\ref{definition of ax}).
Then $a_x\in G^0(K_0,K')$.
\end{prop}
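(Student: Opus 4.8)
The plan is to prove the converse of Proposition \ref{projector_pachable}, namely that if $x\in\tilde G(K_0,K_{U_r})$ is patchable then the element $a_x$ constructed in \eqref{definition of ax} lies in $G^0(K_0,K')$, i.e.\ $\cS_{a_x}=\emptyset$. Since $x\in\tilde G(K_0,K_{U_r})\subseteq\tilde G_\omega(K_0,K_{U_r})$ and $x$ is patchable, the defining condition is $x_i\equiv 0\pmod{p^{\epsilon_0-\Delta_r}}$ for all $i\in U_r$; in particular we may apply Proposition \ref{alg_patch x} with $d=\Delta_r$ (which is $\leq\Delta^\omega_r$, and $x_i\equiv 0\pmod{p^{\epsilon_0-d}}$ holds by patchability). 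That proposition already gives $a_x\in G^0_\omega(K_0,K')$ with exceptional set contained in $\cS=\cS_x\cup\cS_r(\Delta_r)\cup\cS_{>r}(\Delta_r)$. So the whole task reduces to showing each of these three sets is \emph{empty}, not merely finite.

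First I would dispose of $\cS_r(\Delta_r)$ and $\cS_{>r}(\Delta_r)$: by Definition \ref{patching degree} of the patching degree $\Delta_r$, the fields $K_0(\Delta_r)M_{U_r}(\Delta_r)$ (when $U_{<r}$ is nonempty) and $K_0(\Delta_r)M_{U_{>r}}(\Delta_r)$ (when $U_{>r}$ is nonempty) are \emph{globally} locally cyclic, so by Definition \ref{subset S_r} both $\cS_r(\Delta_r)$ and $\cS_{>r}(\Delta_r)$ are empty. (One should note that when $U_{<r}$ is empty the condition on $\cS_r$ still needs handling, but then $U_r$ together with $U_{>r}$ exhausts $\cI$ after discarding $U_{<r}$; the argument in Proposition \ref{alg_patch x} only invokes $K_0(d)M_{U_r}(d)$ being locally cyclic through $\cS_r(d)$, and that field is still locally cyclic for $d\le\Delta_r$ by an argument parallel to condition (1) — one reduces $M_{U_r}(d)$ into $M_{U_{>r}}(d)$ via Lemma \ref{generator of bicyclic} just as in Proposition \ref{projector_pachable}.) The remaining obstacle is therefore $\cS_x$.

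The heart of the matter is that $x\in\tilde G(K_0,K_{U_r})=\tilde G(K_0,K_{U_r})$ lies in the \emph{non-omega} patchable group, whose defining property (inherited from $G(K_0,K_{U_r})$) is precisely $\cS_x=\emptyset$: by the definitions in \S 2.3, $G(K_0,K_{U_r})$ consists of those $a$ with $\cS_a=\emptyset$, and $\tilde G(K_0,K_{U_r})$ is the patchable subgroup \emph{of $G(K_0,K_{U_r})$}. So in the proof of Proposition \ref{alg_patch x}, rerun with $d=\Delta_r$, every place $v\in\Omega_k$ lies in $\bigcup_n\Omega(\overline I_n)$ (there is no exceptional $\cS_x$), and combined with the emptiness of $\cS_r(\Delta_r)\cup\cS_{>r}(\Delta_r)$ just established, one gets $\bigcup_{n\in\ent/p^{\epsilon_0}\ent}\Omega(I_n(a_x))=\Omega_k$ exactly. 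Hence $\cS_{a_x}=\emptyset$ and $a_x\in G^0(K_0,K')$.

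The one step I expect to require genuine care is the reduction handling the field $K_0(\Delta_r)M_{U_r}(\Delta_r)$ when $U_{<r}=\emptyset$ but $U_{>r}\neq\emptyset$: here Definition \ref{patching degree}(2) imposes no constraint, yet the proof of Proposition \ref{alg_patch x} invokes local cyclicity of $K_0(d)M_{U_r}(d)$ via $\cS_r(d)$. The way through is to observe, as in the proof of Proposition \ref{projector_pachable}, that for $d\le\Delta_r$ Lemma \ref{generator of bicyclic} and Lemma \ref{subfield of bicyclic} together give $K_0(d)M_{U_r}(d)\subseteq K_0(d)M_{U_{>r}}(d)$ (picking $s\in U_{>r}$ with $e_{s,t}=e_{0,t}$ for $t\in U_r$ and using $\Delta^\omega_r\ge d$ to place $M_{U_r}(d)$ inside $\cap_{i\in U_r}K_0(d)K_i(d)$, which contains $M_{U_{>r}}(d)$), whence $K_0(d)M_{U_r}(d)$ inherits local cyclicity from $K_0(d)M_{U_{>r}}(d)$ — the latter being locally cyclic by Definition \ref{patching degree}(1). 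With this bookkeeping in place the proof is complete.
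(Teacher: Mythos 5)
Your main line of argument is exactly the paper's proof: apply Proposition \ref{alg_patch x} with $d=\Delta_r$ and observe that all three exceptional sets vanish --- $\cS_x=\emptyset$ because $\tilde{G}(K_0,K_{U_r})$ sits inside $G(K_0,K_{U_r})$, and $\cS_{>r}(\Delta_r)$, $\cS_r(\Delta_r)$ are empty by Definition \ref{patching degree}. That part is correct and is all the paper itself says.

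The extra step you flag --- the case $U_{<r}=\emptyset$, which occurs only for $r=0$ since $U_0\neq\emptyset$ --- is a legitimate concern that the paper glosses over, but your proposed repair does not work. You claim $K_0(d)M_{U_r}(d)\subseteq K_0(d)M_{U_{>r}}(d)$ by ``placing $M_{U_r}(d)$ inside $\cap_{i\in U_r}K_0(d)K_i(d)$,'' but when $U_{<r}=\emptyset$ neither condition of Definition \ref{alg patching degree} bounds $M_{U_r}(d)$ at all: condition (1) only constrains $M_{U_{>r}}(d)$, and condition (2) is vacuous. The inclusion fails already on degree grounds when $U_0$ contains two fields linearly disjoint from each other and from $K_0$: then $K_0(d)M_{U_0}(d)$ has degree $p^{3d}$ while $K_0(d)M_{U_{>0}}(d)$ has degree at most $p^{2d}$, and indeed $K_0(\Delta_0)M_{U_0}(\Delta_0)$ need not be locally cyclic (a triquadratic field can have non-cyclic decomposition groups at ramified places), so $\cS_0(\Delta_0)$ can genuinely be nonempty. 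The correct resolution is simpler and requires no field-theoretic inclusion: in the proof of Proposition \ref{alg_patch x}, the set $\cS_r(d)$ is invoked only inside the ``$U_{<r}$ nonempty'' branch (it is used to make $K_0(d)K_t(d)$ locally cyclic at $v$ so that Lemma \ref{locally cyclic condition} applies to places $s\in U_{<r}$). When $U_{<r}=\emptyset$ that branch is never entered, so the conclusion of Proposition \ref{alg_patch x} holds with $\cS=\cS_x\cup\cS_{>r}(d)$, and both of those sets are empty here. With that substitution your argument closes.
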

\begin{proof}
If $\Delta_r=r$, then clearly $a_{x}=0 \in G(K_0,K')$.
Hence we can assume $\Delta_r>r$.

As $x\in\tilde{G}(K_0,K_{U_r})$, we have $S_x=\emptyset$.
Since $S_r(\Delta_r)$ and $S_{>r}(\Delta_r)$ are also empty, by Proposition \ref{alg_patch x} we have $a_x\in G^0(K_0,K')$.
\end{proof}

\begin{lemma}\label{projection}
Let $a=(a_1,...,a_m)\in G^0_\omega(K_0,K')$ be a nonzero element.
Let $r$ be the maximum integer with the following property:
\begin{enumerate}
  \item There is some $t\in U_r\setminus I_0(a)$ such that
$\delta(a_t,0)=\underset{i\notin I_0(a)}{\min}\{\delta(a_i,0)\}$.
\end{enumerate}

Set $x=\varpi_r(a)\in\underset{i\in U_r}{\oplus}\ent/p^{\epsilon_0-r}\ent$.
Then $x\in \tilde{G}_\omega(K_0,K_{U_r})$. Moreover if $a\in G^0(K_0,K')$, then $x\in\tilde{G}(K_0,K_{U_r})$.
\end{lemma}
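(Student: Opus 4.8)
The goal is to show that the projection $x = \varpi_r(a)$ lands in the patchable subgroup $\tilde G_\omega(K_0,K_{U_r})$, and under the stronger hypothesis that $a\in G^0(K_0,K')$, that $x$ lands in $\tilde G(K_0,K_{U_r})$. By the very definition of $\tilde G_\omega(K_0,K_{U_r})$ as the image of $G^0_\omega(K_0,K')$ under $\varpi_r$ (see the discussion after Definition \ref{alg patchable} and Proposition \ref{projector alg_pachable}), once we check that $x$ is \emph{algebraically patchable} and that $\mathcal S_x$ is finite, we get $x\in\tilde G_\omega(K_0,K_{U_r})$; then for the $G^0(K_0,K')$ case we must additionally verify that $x$ is \emph{patchable} in the sense of Definition \ref{patchable}, i.e. that $x_i \equiv 0 \pmod{p^{\epsilon_0-\Delta_r}}$ for all $i\in U_r$, and that $\mathcal S_x = \emptyset$.

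First I would record what the choice of $r$ buys us. Writing $\epsilon_0 - d = \min_{i\notin I_0(a)}\{\delta(a_i,0)\}$, the maximality of $r$ together with property (1) says that for every $i\in U_{>r}$ we have $\delta(a_i,0) > \epsilon_0 - d$, and there exists $t\in U_r\setminus I_0(a)$ with $\delta(a_t,0) = \epsilon_0 - d$; consequently $\epsilon_0 - d = \min_{i\in U_r}\delta(a_i,0)$ as well (indices in $U_{<r}$ contribute nothing smaller by a similar extremality argument, or are simply absorbed since their $\delta$ to $0$ is at least $\epsilon_0-d$). The plan is then to apply Proposition \ref{bicyclic field} with this $d$: for any $s\in U_{>r}$ (so $\delta(a_1,a_s)=\delta(0,a_s)>\epsilon_0-d$) and our chosen $t\in U_r$ (so $\delta(0,a_t)=\epsilon_0-d$), part (1) gives $K_0(d)\subseteq F_{d,s,t}$, and when $a\in G(K_0,K')$, part (2) gives local cyclicity of the relevant composite. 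I would also invoke Lemma \ref{M_U equal M_U'} to replace the single index $t$ by the whole of $M_{U_r}(d)$ and hence identify $K_0(d)M_{U_r}(d)$, $K_0(d)M_{U'_r}(d)$, $K_0(d)M_{U_{>r}}(d)$ after adjoining $K_0(d)$. This is exactly the kind of chain of field inclusions appearing in the proof of Proposition \ref{projector alg_pachable}, and in fact the cleanest route is: the element $a$ itself is in $G^0_\omega(K_0,K')$, so by Proposition \ref{projector alg_pachable} its restriction $\varpi_r(a)=x$ is algebraically patchable — but wait, that proposition is phrased for $a_x$, the element built \emph{from} $x$, not for a general $a$ whose restriction is $x$. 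So the honest approach is to run the argument of Proposition \ref{projector alg_pachable} directly on $a$: show $\Delta^\omega_r\geq d$ by checking conditions (1) and (2) of Definition \ref{alg patching degree} using the field inclusions just described, which yields $\delta(0,x_i)\geq\epsilon_0-\Delta^\omega_r$ for all $i\in U_r$ since $\delta(0,x_i)=\delta(0,a_i)\geq\epsilon_0-d$ and $d\leq\Delta^\omega_r$. For the finiteness of $\mathcal S_x$: since $a\in G_\omega(K_0,K')$, the set $\mathcal S_a$ is finite, and the localization description in the proof of Theorem \ref{sha_omega T0_primepower} shows $\mathcal S_x\subseteq \mathcal S_a\cup(\text{a finite set of places where some }K_0(d)\otimes K_i^v\text{ behaves badly})$; more directly, whenever $v\notin\mathcal S_a$ there is $n$ with $v\in\Omega(I_n(a))$, and projecting this partition to $U_r$ shows $v\in\Omega(\bar I_{\bar n})$ for $\bar n=\pi_{\epsilon_0,\epsilon_0-r}(n)$, so $\mathcal S_x\subseteq\mathcal S_a$ is finite.

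For the stronger statement, suppose $a\in G^0(K_0,K')$, so $\mathcal S_a=\emptyset$; then $\mathcal S_x\subseteq\mathcal S_a=\emptyset$. It remains to show $x$ is patchable, i.e. $d\leq\Delta_r$. This is where Proposition \ref{bicyclic field}(2) enters: with $a\in G(K_0,K')$, for $s\in U_{>r}$, $t\in U_r$ as above, the field $K_0(d)K_u(d+e_{s,t}-\beta)$ is locally cyclic, hence so is its subfield $K_0(d)K_u(d)$, and then via Lemma \ref{M_U equal M_U'} and Lemma \ref{generator of bicyclic} we conclude $K_0(d)M_{U_{>r}}(d)$ and $K_0(d)M_{U_r}(d)$ are locally cyclic — exactly conditions (1) and (2) of Definition \ref{patching degree} — giving $\Delta_r\geq d$. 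Combined with $\delta(0,x_i)\geq\epsilon_0-d$ this forces $x_i\equiv 0\pmod{p^{\epsilon_0-\Delta_r}}$, so $x\in\tilde G(K_0,K_{U_r})$. This mirrors the proof of Proposition \ref{projector_pachable} almost verbatim, again with $a$ in place of $a_x$.

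\textbf{Main obstacle.} The delicate point is organizing the case analysis over whether $U_{>r}$ and $U_{<r}$ are empty or not, and keeping straight which of $e_{s,t}=e_{0,s}$ versus $e_{0,t}$ holds in each configuration (this determines which field equals which composite in Proposition \ref{bicyclic field}(1) and Lemma \ref{M_U equal M_U'}). A second subtlety is verifying that $\epsilon_0-d=\min_{i\in U_r}\delta(0,a_i)$, i.e. that no index outside $U_r$ forces the minimum lower: this is precisely guaranteed by the \emph{maximality} of $r$ in property (1), and it must be used carefully — if we had chosen $r$ non-maximally, some $i\in U_{>r}$ could achieve the minimum and the inclusion $K_0(d)\subseteq K_sK_t$ from Lemma \ref{K_0(d) splits} / Proposition \ref{bicyclic field} would be applied with the wrong pair. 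Once that bookkeeping is pinned down, the argument is a direct transcription of Propositions \ref{projector alg_pachable} and \ref{projector_pachable} with $a$ replacing $a_x$, so I do not expect any genuinely new difficulty beyond this.
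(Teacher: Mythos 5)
Your strategy is essentially the paper's own: observe that $\cS_x\subseteq\cS_a$ (so $x\in G_\omega(K_0,K_{U_r})$, resp. $G(K_0,K_{U_r})$), set $\epsilon_0-d=\min_{i\notin I_0(a)}\delta(a_i,0)$, and then rerun the field-theoretic arguments of Propositions \ref{projector alg_pachable} and \ref{projector_pachable} on $a$ itself to prove $d\leq\Delta^\omega_r$, resp. $d\leq\Delta_r$, via Proposition \ref{bicyclic field} and Lemma \ref{M_U equal M_U'}. You are also right that one cannot simply cite Proposition \ref{projector alg_pachable}, since that statement concerns $a_x$ rather than a general preimage $a$ of $x$. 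However, one step of your reduction is incorrect and hides a sub-case that needs a genuinely different argument.

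You assert that the maximality of $r$ guarantees that no index outside $U_r$ achieves the minimum, in particular that $\delta(a_i,0)>\epsilon_0-d$ for every $i\in U_{>r}$ and that indices in $U_{<r}$ are ``simply absorbed.'' Maximality of $r$ only excludes witnesses in $U_{r''}$ for $r''>r$; it imposes nothing on $U_{<r}$. So there may exist $s\in U_{<r}\setminus I_0(a)$ with $\delta(0,a_s)=\epsilon_0-d$ exactly, and for such an $s$ you cannot feed the pair $(s,t)$, $t\in U_r'$, into Proposition \ref{bicyclic field}, which requires one index with $\delta>\epsilon_0-d$ and one with $\delta=\epsilon_0-d$. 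Consequently the inclusion $K_0(d)M_{U_r}(d)\subseteq K_0(d)K_s(d)$ needed for condition (2) of Definition \ref{alg patching degree} does not follow from what you wrote. The paper closes this gap by pairing such an $s$ with the index $1$ (legitimate because $a_1=0$ gives $\delta(a_1,a_1)=\epsilon_0>\epsilon_0-d$), deducing $K_0(d)K_s(d)\subseteq K_0(d)K_1(d)$, and then using that subfields of the cyclic extension $K_0(d)K_1(d)/K_0(d)$ are totally ordered, together with the degree inequality coming from $e_{0,s}<r=e_{0,i}$ for $i\in U_r'$, to conclude $K_0(d)K_i(d)\subseteq K_0(d)K_s(d)$. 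A second, more minor, slip of the same kind: for $i\in U_{>r}\cap I_0(a)$ one can have $\delta(a_i,0)=e_i\leq\epsilon_0-d$ (namely when $e_{0,i}\geq d$), contradicting your blanket claim; this case is harmless because then $K_i(d)=K_0(d)$, but it must be split off before invoking Proposition \ref{bicyclic field}. With these two cases added, your argument matches the paper's proof.
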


\begin{proof}

Clearly $x\in G_\omega(K_0,K_{U_r}).$
By the choice of $t$, we have $x\neq 0$.
Set $\epsilon_0-d=\delta(a_t,0)$.
Then $d>r$ as $x\neq 0$.
To prove that $x$ is algebraically patchable, it is enough to show that $d\leq\Delta^\omega_r$.

Set $U_r'=\{i\in U_r|\ \epsilon_0-d=\delta(0,a_i)\}.$
By the choice of $r$ we have $U'_r\neq\emptyset$.

Suppose that $U_{>r}$ is nonempty.
By the choice of $U_r$, for all $s\in U_{>r}$ we have either  $\delta(0,a_s)>\epsilon_0-d$ or $s\in I_0(a)$.
Let $s\in U_{>r}$.
Suppose $\delta(0,a_s)\leq\epsilon_0-d$.
Then $s\in I_{0}(a)$ and $e_{0,s}\geq d$.
As $e_{0,s}\geq d$, we have $K_s(d)=K_0(d)\subseteq K_0(d)K_i(d)$ for all $i$ in $U_r$.

Suppose that $\delta(0,a_s)>\epsilon_0-d$.
By Proposition \ref{bicyclic field}, we have $K_0(d)K_s(d)\subseteq K_0(d)K_i(d)$ for all $i$ in $U'_r$.
Hence by Lemma \ref{M_U equal M_U'} we have $K_0({d})M_{U_{>r}}(d)\subseteq \underset{i\in U_{r}}{\cap} K_0({d})K_i({d})$.

Suppose that $U_{<r}$ is nonempty.
Let $s\in U_{<r}$.
Then $\delta(0,a_s)\geq \epsilon_0-d$.
If $\delta(0,a_s)>\epsilon_0-d$, then by Proposition \ref{bicyclic field} we have $K_0(d)K_i(d)\subseteq K_0(d)K_s(d)$ for all $i$ in $U'_r$.

If $\delta(0,a_s)=\epsilon_0-d$, then  by Proposition \ref{bicyclic field} we have $K_0(d)K_s(d)\subseteq K_0(d)K_1(d)$. As $e_{0,s}<r$, we have $[K_0(d)K_i(d):K_0(d)]<[K_0(d)K_s(d):K_0(d)]$ for all $i\in U'_r$. Since they are both subfields of the cyclic extension $K_0(d)K_1(d)$  of $K_0(d)$, we have $K_0(d)K_i(d)\subset K_0(d)K_s(d)$ for all $i\in U'_r$.

By Lemma \ref{M_U equal M_U'}
we have $K_0({d})M_{U_{r}}(d)=K_0(d)M_{U'_r}(d)\subseteq\underset{s\in U_{<r}}{\cap} K_0({d})K_s({d})$.
Therefore $d\leq \Delta^\omega_r$ and $x$ is algebraically patchable.

Now suppose further that $a\in G^0(K_0,K')$.
Clearly $x\in G(K_0,K_{U_r}).$
Suppose that $U_{>r}$ is nonempty. Then as $K_0(d)M_{U>r}(d)$ is contained in a bicyclic extension, by Lemma \ref{generator of bicyclic} we have
$K_0(d)M_{U>r}(d)=K_0(d)K_s(d)$ for some $s\in U_{>r}$.
By the choice of $r$ and by Proposition \ref{bicyclic field} (2), we have either $K_0(d)=K_s(d)$ or $K_0(d)K_s(d)$ is locally cyclic.

Suppose that $U_{<r}$ is nonempty.
By the same argument we have $K_0(d)M_{U_r}(d)=K_0(d)K_i(d)$ for some $i\in U'_{r}$.
As $a_1=0$, by Proposition \ref{bicyclic field} (2) we have $K_0(d)K_i(d)$ is locally cyclic.
Hence $d\leq \Delta_r$.

\end{proof}

\begin{prop}\label{G^0 structure}
We have \begin{enumerate}
          \item $G_\omega(K_0,K')\simeq D \oplus\underset{r}{\oplus} \tilde{G}_\omega(K_0,K_{U_r})$,
          where $r$ runs over nonnegative integers such that $U_r$ is nonempty.
          \item $G(K_0,K')\simeq D \oplus\underset{r}{\oplus} \tilde{G}(K_0,K_{U_r})$,
          where $r$ runs over nonnegative integers such that $U_r$ is nonempty.
        \end{enumerate}
\end{prop}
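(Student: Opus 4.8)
The plan is to peel off the diagonal, reduce everything to the subgroups $G^0_\omega(K_0,K')$ and $G^0(K_0,K')$, and then realize the decomposition on $G^0$ by playing the projections $\varpi_r$ against the sections $x\mapsto a_x$ of (\ref{definition of ax}). For $G_\omega$ the splitting $G_\omega(K_0,K')=D\oplus G^0_\omega(K_0,K')$ is already recorded; for $G$ the same holds, since $D\cap G^0(K_0,K')=0$ (a diagonal tuple whose first coordinate vanishes is $0$) and every $a\in G(K_0,K')$ differs from the diagonal element carrying its first coordinate — which lies in $D\subseteq G(K_0,K')$ — by an element of $G^0(K_0,K')$. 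Because the $U_r$ partition $\cI$ and $e_i=\epsilon_0-r$ for $i\in U_r$, each $\tilde G_\omega(K_0,K_{U_r})$ embeds in $\underset{i\in\cI}{\oplus}\ent/p^{e_i}\ent$ as the subgroup supported on the coordinates of $U_r$, so $\underset{r}{\oplus}\tilde G_\omega(K_0,K_{U_r})$ and $\underset{r}{\oplus}\tilde G(K_0,K_{U_r})$ (sums over $r$ with $U_r\neq\emptyset$) are internal direct sums; thus it suffices to prove $G^0_\omega(K_0,K')=\underset{r}{\oplus}\tilde G_\omega(K_0,K_{U_r})$ and the analogous identity without $\omega$. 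If $U_r=\cI$ for some $r$ this is immediate, since then $r=0$, $\Delta^\omega_0=\Delta_0=\epsilon_0$, $\tilde G_\omega(K_0,K_{U_0})=G^0_\omega(K_0,K')$ and $\tilde G(K_0,K_{U_0})=G^0(K_0,K')$; so I may assume $U_r\neq\cI$ for all $r$.

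I would then write down mutually inverse maps. One direction sends $(x^{(r)})_r\mapsto\sum_r a_{x^{(r)}}$; this is well defined with image in $G^0_\omega(K_0,K')$ by Proposition \ref{alg_patch x} (resp. in $G^0(K_0,K')$ by Proposition \ref{patch}), and applying $\varpi_s$ returns $x^{(s)}$ because $a_{x^{(r)}}$ is supported on $U_r$. The other direction sends $a\mapsto(\varpi_r(a))_r$. So the proposition reduces to the single claim: for every $a\in G^0_\omega(K_0,K')$ and every $r$ with $U_r$ nonempty one has $\varpi_r(a)\in\tilde G_\omega(K_0,K_{U_r})$, and $\varpi_r(a)\in\tilde G(K_0,K_{U_r})$ whenever moreover $a\in G^0(K_0,K')$. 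I would prove this by induction on $N(a):=\#\{i\in\cI:a_i\neq 0\}$, the case $a=0$ being trivial. For $a\neq 0$, let $r_0$ be the largest integer satisfying property $(1)$ of Lemma \ref{projection}; such $r_0$ exists because any $i\notin I_0(a)$ with $\delta(a_i,0)$ minimal lies in $U_{e_{0,i}}$, which then satisfies $(1)$. By Lemma \ref{projection}, $\varpi_{r_0}(a)\in\tilde G_\omega(K_0,K_{U_{r_0}})$ (resp. $\in\tilde G(K_0,K_{U_{r_0}})$), and it is nonzero because property $(1)$ yields $t\in U_{r_0}$ with $a_t\neq 0$. Writing $a_x$ as in (\ref{definition of ax}), set $b=a-a_{\varpi_{r_0}(a)}$. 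By Proposition \ref{alg_patch x} (resp. \ref{patch}) the element $a_{\varpi_{r_0}(a)}$ lies in $G^0_\omega(K_0,K')$ (resp. $G^0(K_0,K')$), hence so does $b$; moreover $b$ agrees with $a$ off $U_{r_0}$ and vanishes on $U_{r_0}$, so $N(b)<N(a)$. By the inductive hypothesis $\varpi_r(b)\in\tilde G_\omega(K_0,K_{U_r})$ for all $r$; since $a_{\varpi_{r_0}(a)}$ is supported on $U_{r_0}$, for $r\neq r_0$ this gives $\varpi_r(a)=\varpi_r(b)\in\tilde G_\omega(K_0,K_{U_r})$, and the case $r=r_0$ was handled directly. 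The non-$\omega$ variant is identical word for word. With the previous paragraph this finishes the proof.

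Essentially all of the arithmetic and Galois-theoretic input has already been isolated in Lemma \ref{projection} and in Propositions \ref{alg_patch x} and \ref{patch}, so the main obstacle in assembling the statement is the bookkeeping of the induction in the middle step: one has to be sure that removing the ``top'' summand $a_{\varpi_{r_0}(a)}$ leaves a residue $b$ that still lies in $G^0_\omega(K_0,K')$ (resp. $G^0(K_0,K')$) — which is precisely what Propositions \ref{alg_patch x} and \ref{patch} guarantee — and that this residue has strictly smaller support, so that the induction terminates; the nonvanishing of $\varpi_{r_0}(a)$, forced by property $(1)$ of Lemma \ref{projection}, is exactly what makes the support strictly drop.
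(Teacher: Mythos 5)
Your proposal is correct and follows essentially the same route as the paper: reduce to $G^0_\omega(K_0,K')$ (resp. $G^0(K_0,K')$), use Proposition \ref{alg_patch x} (resp. \ref{patch}) for the inclusion $\underset{r}{\oplus}\tilde G_\omega(K_0,K_{U_r})\subseteq G^0_\omega(K_0,K')$, and prove the reverse inclusion by induction on the size of the support $\cI\setminus I_0(a)$, peeling off $a_{\varpi_{r_0}(a)}$ for the $r_0$ singled out by Lemma \ref{projection}. The extra bookkeeping you supply (internal directness of the sum, the splitting $G(K_0,K')=D\oplus G^0(K_0,K')$, the existence and effect of the maximal $r_0$) matches what the paper leaves implicit.
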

\begin{proof}
To prove (1), it is sufficient to show that $G^0_\omega(K_0,K')\simeq\underset{r}{\oplus} \tilde{G}_\omega(K_0,K_{U_r})$.
By Proposition \ref{alg_patch x}, we have $G^0_\omega(K_0,K')\supseteq\underset{r}{\oplus} \tilde{G}_\omega(K_0,K_{U_r})$.

Let $a\in G^0_\omega(K_0,K')$ and $J(a)=\cI\setminus I_0(a)$.
We prove that $a\in\underset{r}{\oplus} \tilde{G}_\omega(K_0,K_{U_r})$ by induction on
$|J(a)|$.

If $|J(a)|=0$, then $a=0\in \underset{r}{\oplus} \tilde{G}_\omega(K_0,K_{U_r})$.
Suppose that the result holds when $|J(a)|<h$.

Let  $|J(a)|=h$ and let $r$ satisfy the condition (1) in Lemma \ref{projection}.
By Lemma \ref{projection}, we have $\varpi_r(a)\in \tilde{G}_\omega(K_0,K_{U_r})$.
Set $x=\varpi_r(a)$. Then $a_x\in G^0_\omega(K_0,K')$ by Proposition \ref{alg_patch x}.
Hence $(a'_i)_{i\in\cI}=a-a_x\in G^0_\omega(K_0,K')$ and $|J(a')|<h$.
By induction hypothesis $a'\in\underset{r}{\oplus} \tilde{G}_\omega(K_0,K_{U_r})$.
Hence $a\in \underset{r}{\oplus} \tilde{G}_\omega(K_0,K_{U_r})$.

Assertion (2) can be proved similarly by using Lemma \ref{projection} and  Proposition \ref{patch}.
\end{proof}

\section{The degree of freedom}
In this section, we define the \emph{algebraic degree of freedom} (resp. the \emph{degree of freedom})
 to describe the generators of the subgroups  $\tilde{G}_\omega(K_0,K_{U_r})$'s (resp. $\tilde{G}(K_0,K_{U_r})$'s).

\subsection{$l$-equivalence relations and levels}

Let $i,j\in \cI'$ and $l$ be a nonnegative integer. We say that $i,j$ are $l$-equivalent and
we write $i\underset{l}{\sim} j$ if $e_{i,j}\geq l$ or $i=j$.
As $K_i$ are cyclic, it is clear that $"\underset{l}{\sim}"$ defines an equivalence relation on any nonempty subset of $\cI'$.

For a nonempty subset $C$ of $\cI'$, denote by $n_l(C)$ be the number of $l$-equivalence classes  of $C$. In particular $n_0(C)=1$.

For each $C\subseteq \cI'$ with cardinality bigger than 1,
we define the \emph{level} of $C$ to be
the smallest integer $l$ such that $n_{l+1}(C)>1$.
For each $C=\{i\}$, we define the \emph{level} of $C$ to be $\epsilon_i$.
Denote by $L(C)$ the level of $C$.


\subsection{The degree of freedom of $U_0=\cI$}

\begin{lemma}\label{element of G}
Assume that $U_0=\cI$. Let $l=L(\cI)$ and $c$ be an equivalence class of $\cI/\underset{l+1}{\sim}$.
Let $0\leq f\leq d$ be integers satisfying the following:
\begin{itemize}
  \item (1) $M_{U_0}(d)$ is a subfield of a bicyclic extension.
  \item (2) $K_0({f})\subseteq M_{U_0}(d)$.
\end{itemize}
 For $i\in \cI$, set $x=(x_1,...,x_m)$ as follows:
\begin{equation}
x_j=\left\{
\begin{array}{ll}
p^{\epsilon_0-f}, & \hbox{if $j\in c$}\\
0, & \hbox{otherwise.}
\end{array}
\right .
\end{equation}
Then $x\in G_\omega(K_0,K')$.
\end{lemma}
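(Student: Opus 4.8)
The plan is to unwind the combinatorial definition of $G_\omega(K_0,K')$ for this particular $x$, reducing membership to a splitting statement for $K_0(f)$ inside the $K_i$'s, and then to settle that statement using the bicyclicity of $M_{U_0}(d)$ together with Lemma \ref{generator of bicyclic}. First I would dispose of $f=0$ (then $x=0\in D\subseteq G_\omega(K_0,K')$) and assume $f\geq 1$. Since $U_0=\cI$ we have $e_i=\epsilon_0$ for all $i$ and $K_0\cap K_i=k$; in particular $n$ and all the $x_i$ lie in $\ent/p^{\epsilon_0}\ent$, so $n\succeq x_i$ just means $n=x_i$. Hence $I_0(x)=\cI\setminus c$, $I_{p^{\epsilon_0-f}}(x)=c$, and $I_n(x)=\emptyset$ otherwise. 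Using $\delta(0,p^{\epsilon_0-f})=\delta(p^{\epsilon_0-f},0)=\epsilon_0-f$, the definition of $\Omega(\cdot)$ gives $\Omega(I_0(x))=\underset{i\in c}{\cap}\Sigma_i^{\epsilon_0-f}$ and $\Omega(I_{p^{\epsilon_0-f}}(x))=\underset{i\in\cI\setminus c}{\cap}\Sigma_i^{\epsilon_0-f}$, so
\[
\cS_x\subseteq\Omega_k\setminus\left(\Big(\underset{i\in c}{\cap}\Sigma_i^{\epsilon_0-f}\Big)\cup\Big(\underset{i\in\cI\setminus c}{\cap}\Sigma_i^{\epsilon_0-f}\Big)\right),
\]
and it suffices to prove that this union of intersections is cofinite in $\Omega_k$.

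Next I would identify $M_{U_0}(d)$. If $d\leq l:=L(\cI)$ then, since $n_l(\cI)=1$, all the $K_i(d)$ coincide, so $K_0(f)\subseteq K_i\cap K_0=k$, forcing $f=0$; thus $d>l$. For $i\in c$ and $j\in\cI\setminus c$ one has $e_{i,j}\geq l$ (because $n_l(\cI)=1$) and $e_{i,j}\leq l$ (because $i$ and $j$ lie in different $(l+1)$-equivalence classes), hence $e_{i,j}=l$, which is the minimum of the $e_{a,b}$ over all pairs $a\neq b$ in $\cI$; therefore $[K_i(d)K_j(d):k]$ is maximal among the $[K_a(d)K_b(d):k]$. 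Moreover $M_{U_0}(d)$ contains $K_i(l+1)K_j(l+1)$, whose Galois group over $k$ is $\ent/p^{l+1}\ent\times\ent/p\ent$ by Lemma \ref{bicyclic galois group} and hence non-cyclic; being a subfield of a bicyclic extension (hypothesis (1)), $M_{U_0}(d)$ is therefore itself bicyclic, and Lemma \ref{generator of bicyclic} yields $M_{U_0}(d)=K_i(d)K_j(d)$ for every $i\in c$, $j\in\cI\setminus c$. By hypothesis (2), $K_0(f)\subseteq K_i(d)K_j(d)$ for all such $i,j$.

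Finally I would argue place by place. For all but finitely many $v$ — namely $v$ non-archimedean and unramified in the (abelian) compositum of all $K_s$, $s\in\cI'$ — the description of $\Sigma_i^{\epsilon_0-f}$ shows that $v\in\Sigma_i^{\epsilon_0-f}$ exactly when $\mathrm{ord}(\mathrm{Frob}_v|_{K_0(f)})$ divides $\mathrm{ord}(\mathrm{Frob}_v|_{K_i})$, since these orders are the local degrees and an unramified extension of $k_v$ of degree $p^a$ tensored up to one of degree $p^b$ is a product of copies of the latter iff $a\leq b$. Now fix $i\in c$ and $j\in\cI\setminus c$; as $K_0(f)\subseteq K_i(d)K_j(d)\subseteq K_iK_j$ and $\Gal(K_iK_j/k)$ embeds in $\Gal(K_i/k)\times\Gal(K_j/k)$, the order of $\mathrm{Frob}_v|_{K_0(f)}$ divides $\mathrm{lcm}(\mathrm{ord}(\mathrm{Frob}_v|_{K_i}),\mathrm{ord}(\mathrm{Frob}_v|_{K_j}))$; all of these being powers of $p$, it is bounded by one of the two, i.e. $v\in\Sigma_i^{\epsilon_0-f}$ or $v\in\Sigma_j^{\epsilon_0-f}$. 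If $v$ lay outside $(\underset{i\in c}{\cap}\Sigma_i^{\epsilon_0-f})\cup(\underset{i\in\cI\setminus c}{\cap}\Sigma_i^{\epsilon_0-f})$, we could pick $i_0\in c$ and $j_0\in\cI\setminus c$ with $v\notin\Sigma_{i_0}^{\epsilon_0-f}$ and $v\notin\Sigma_{j_0}^{\epsilon_0-f}$, contradicting this dichotomy for the pair $(i_0,j_0)$. Hence this union contains all but finitely many places, $\cS_x$ is finite, and $x\in G_\omega(K_0,K')$.

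The heart of the argument — and the step I expect to be the main obstacle to write carefully — is the structural identity $M_{U_0}(d)=K_i(d)K_j(d)$ for every cross pair $i\in c$, $j\in\cI\setminus c$, which rests on the level computation $e_{i,j}=L(\cI)$ and on Lemma \ref{generator of bicyclic}; once this is in hand the local dichotomy is routine. One must also stay mindful that the Frobenius characterization of $\Sigma_i^{\epsilon_0-f}$ is valid only off a finite set of places, which is exactly why the conclusion is membership in $G_\omega(K_0,K')$ (and not in $G(K_0,K')$).
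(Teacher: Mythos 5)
Your proof is correct and follows essentially the same route as the paper: you reduce membership in $G_\omega(K_0,K')$ to the dichotomy that almost every place lies in $\underset{i\in c}{\cap}\Sigma_i^{\epsilon_0-f}$ or in $\underset{j\in\cI\setminus c}{\cap}\Sigma_j^{\epsilon_0-f}$, and derive it from the identity $M_{U_0}(d)=K_i(d)K_j(d)$ for every cross pair $i\in c$, $j\in\cI\setminus c$, obtained from the level computation $e_{i,j}=L(\cI)$ together with Lemma \ref{generator of bicyclic} --- exactly the paper's skeleton. The only difference is cosmetic: the paper proves the local dichotomy by comparing local degrees at the places where $M_{\cI}(d)$ is locally cyclic, while you use Frobenius orders at the unramified places; both exclude only a finite set of places, which is all that membership in $G_\omega$ requires.
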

\begin{proof}
By the definition of $l$,
the field $M_{U_0}(d)$ is cyclic if and only if $d\leq l$.
In this case we have $f=0$ and $x=0\in G_\omega(K_0,K')$.
Hence we assume $M_{U_0}(d)$ is bicyclic in the following.

As there are more than one equivalence classes in $\cI/\underset{l+1}{\sim}$, the set $\cI\setminus c$ is non-empty
and $I_0(x)=\cI\setminus c$.
Since $L(\cI)=l$, we have $e_{s,t}\geq l$ for any $s$, $t\in\cI$. Moreover for $s\in c$ and $t\in \cI\setminus c$ we have $e_{s,t}=l$.
By Lemma \ref{generator of bicyclic} we have $M_{\cI}(d)=K_s(d)K_t(d)$ for any $s\in c$ and $t\in \cI\setminus c$.

Let $s$, $t$ be as above.
Let $v\in\Omega$ be a place where $M_{\cI}(d)^v$ is  locally cyclic at $v$.
We claim that either $K_0(f)\otimes_k K_s(d)^v$ is a product of copies of $K_s(d)^v$ or $K_0(f)\otimes_k K_t(d)^v$ is a product of copies of $K_t(d)^v$.

Let $\gamma$ (resp. $\gamma_i$'s) be the integer such that $K_0(f)^v$ (resp. $K_i(d)^v$) is a product of  extensions of degree $p^\gamma$ (resp. $p^{\gamma_i}$) of $k^v$.
Suppose that $K_0(f)^v\otimes_{k^v} K_s(d)^v$ is not a product of copies of $K_s(d)^v$.
Then since $M_\cI(d)^v$ is a product of cyclic extensions,
we have $\gamma>\gamma_s$.
If $\gamma>\gamma_t$, then $M_{\cI}(d)^v=(K_s(d)K_t(d))^v$ is a product of fields of degree less than $\gamma$, which is a contradiction as $K_0(f)\subseteq M_{\cI}(d)$.
Hence $\gamma<\gamma_t$ and $K_0(f)^v\otimes_{k^v} K_t(d)^v$ is a product of copies of $K_t(d)^v$ by the cyclicity of $M_{\cI}(d)^v$.

Since $M_{\cI}(d)^v$ is  locally cyclic at almost all $v\in\Omega_k$, we have $x\in G_\omega(K_0,K')$.
\end{proof}

\medskip

\begin{remark}\label{deg of freedom of U_0}
If $M_{\cI}(d)$ in Lemma \ref{element of G} is locally cyclic, then by the above proof we have $x\in G(K_0,K')$.
\end{remark}

\medskip

Keep the notation defined as above.
The element $x$ has order $p^f$.
Suppose that $f>0$.
Then $d>L(U_0)$ by the above proof.
As $K_0(f)K_i(d)\subseteq M_{U_0}(d)$, for dimension reasons  $f\leq d-L(U_0)$.
On the other hand, by Lemma \ref{subfield of bicyclic} if $K_0(f)\subseteq M_{U_0}(d)$, then $K_0(f)\subseteq M_{U_0}(f+L(U_0))$. Hence we can choose $d=f+L(U_0)$ and define \emph{the algebraic degree of freedom $f^\omega_{U_0}$ of $U_0$} to be the largest $f$ such that $f$ and $d=f+L(U_0)$ satisfy the conditions in Lemma \ref{element of G}. By Proposition \ref{bicyclic field} \emph{the algebraic degree of freedom} $f^\omega_{U_0}$ is the maximal possible order of a class function on $U_0/\underset{l+1}{\sim}$ which lies in ${G}_\omega(K_0,K')$.

\subsection{General cases}

Inspired by the definition of $f^\omega_{U_0}$, for $U_r$ nonempty we define the algebraic degree of freedom of $U_r$ to describe the generators of $\tilde{G}_\omega(K_0,K_{U_r})$.
Briefly speaking, the group $\tilde{G}_\omega(K_0,K_{U_r})$ is generated by class functions on $U_r/\underset{l}{\sim}$ for
 $l>L(U_r)$. The order of such a generator is called \emph{the degree of freedom}.

\begin{defn} \label{alg deg of freedom of U_r}
For a nonempty $U_r$, let $l_r=L(U_r)$.
Let $f\leq \Delta^\omega_r$ be a nonnegative integer satisfying the following:
\begin{enumerate}
  \item  The field $M_{U_r}(f+l_r-r)$ is a subfield of a bicyclic extension.
  \item  $K_0({f})\subseteq M_{U_r}(f+l_r-r)$.
\end{enumerate}
Then we set $f^\omega_{U_r}$ to be the largest $f\leq\Delta^\omega_r$ satisfying above conditions.
We call $f^\omega_{U_r}$ \emph{the algebraic degree of freedom} of $U_r$.
\end{defn}

\begin{remark}
Note that $f=r$ always satisfies the conditions in Definition \ref{alg deg of freedom of U_r}.
Hence we have $\Delta^\omega_r\geq f^\omega_{U_r}\geq r$.
\end{remark}

For $h\geq L(U_r)$ and a class $c_0$ of $U_r/\underset{h}{\sim}$,
 we define by recursion \emph{the algebraic degree of freedom of $c\in c_0/\underset{h+1}{\sim}$} as follows.
\begin{defn}
Keep the notation defined as above.
Let $f\leq f^\omega_{c_0}$ be a nonnegative integer satisfying the following:
\begin{enumerate}
  \item  The field $M_{c}(f+L(c)-r)$ is a subfield of a bicyclic extension.
  \item  $K_0({f})\subseteq M_{c}(f+L(c)-r)$.
\end{enumerate}
Then we set $f^\omega_{c}$ to be the largest $f\leq f^\omega_{c_0}$ satisfying above conditions.
We call $f^\omega_{c}$ \emph{the algebraic degree of freedom} of $c$.
\end{defn}

Inspired by Remark \ref{deg of freedom of U_0} we define similarly \emph{the degree of freedom}.

\begin{defn}
For a nonempty $U_r$, let $h\geq L(U_r)$ and $c\in U_r/\underset{h}{\sim}$.
We define \emph{the degree of freedom} $f_{c}$ of $c$ to be the maximum integer $f\leq f^\omega_{c}$ such that $M_{c}(f+L(c)-r)$ is locally cyclic.

\end{defn}

\begin{remark}\label{unramified extension}
From the above definition, we see that $f_{c}=f^\omega_{c}$ if
$M_{c}(f^\omega_{c}+L(c)-r)$ is locally cyclic (e.g. unramified) over $k$.
\end{remark}

\begin{prop}\label{expression of M as K_0K_i}
For a nonempty $U_r$, let  $h\geq L(U_r)$ and $c\in U_r/\underset{h}{\sim}$.
For all integers $r\leq f\leq f^\omega_{c}$ and $i\in c$, we have $M_{c}(f+L(c)-r)=K_0(f)K_i(f+L(c)-r)$.
\end{prop}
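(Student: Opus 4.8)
The plan is to reduce everything to the two defining conditions of the algebraic degree of freedom $f^\omega_c$ and then argue by dimension count inside a single bicyclic extension. First I would fix $i\in c$ and write $d=f+L(c)-r$; by the definition of $f^\omega_c$ (and monotonicity via Lemma \ref{subfield of bicyclic}, exactly as in the discussion preceding Definition \ref{alg deg of freedom of U_r}), the integer $f$ together with this $d$ satisfies conditions (1) and (2): the field $M_c(d)$ sits inside a bicyclic extension, and $K_0(f)\subseteq M_c(d)$. In particular, since $i\in c$, we have $K_0(f)K_i(d)\subseteq M_c(d)$, so it suffices to prove the reverse inclusion, i.e. that every $K_j(d)$ with $j\in c$ lies in $K_0(f)K_i(d)$.

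The key step is a degree comparison. Since $M_c(d)$ is contained in a bicyclic extension and $d>L(c)$ (the case $f\le r$, equivalently $d\le L(c)$, forces $M_c(d)$ cyclic and the statement is immediate because then $M_c(d)=K_i(d)$), Lemma \ref{generator of bicyclic} gives $M_c(d)=K_i(d)K_j(d)$ for suitable $j\in c$ realizing the maximal composite; by Lemma \ref{bicyclic galois group} applied over $k$ (or over $F=K_i(d)\cap K_j(d)$), $\Gal(M_c(d)/k)\simeq \ent/p^d\ent\times \ent/p^{d-L(c)}\ent$, so $[M_c(d):K_i(d)]=p^{d-L(c)}$. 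On the other hand $K_0(f)\subseteq M_c(d)$ and $K_0(f)\cap K_i(d)=K_0(e_{0,i})=K_0(r)$ since $i\in U_r$ and $K_0$ is cyclic; hence $[K_0(f)K_i(d):K_i(d)]=p^{f-r}=p^{d-L(c)}=[M_c(d):K_i(d)]$. Regarding $M_c(d)$ as a cyclic extension of $K_i(d)$ (it is, by Lemma \ref{bicyclic galois group}, namely $\Gal(M_c(d)/K_i(d))\simeq\Gal(K_j(d)/K_i(d)\cap K_j(d))$), the subfield $K_0(f)K_i(d)$ of the correct degree must equal $M_c(d)$. Therefore $K_0(f)K_i(d)=M_c(d)$, and since $i\in c$ was arbitrary we get $K_0(f)K_i(d)=M_c(d)$ for every $i\in c$, which is the claim.

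The main obstacle I anticipate is bookkeeping the various intersection degrees correctly: one must be sure that $K_0(f)\cap K_i(d)=K_0(r)$ (using $e_{0,i}=r$ for $i\in U_r$, and that all these are subfields of the cyclic $K_0$ and of the cyclic $K_i$), and that $M_c(d)$ is genuinely a \emph{cyclic} extension of $K_i(d)$ rather than merely contained in a bicyclic one — both of which follow from Lemma \ref{bicyclic galois group} once one checks $[K_i(d):k]\ge [K_j(d):k]$ holds after possibly relabelling, or works over the common subfield $F=K_i(d)\cap K_j(d)$ as in the proof of Lemma \ref{subfield of bicyclic}. A secondary point is the degenerate case $d\le L(c)$, where $M_c(d)$ is cyclic; there $f=r$, $K_0(f)=K_0(r)=K_i(r)\subseteq K_i(d)$, and $M_c(d)=K_i(d)=K_0(f)K_i(d)$ trivially. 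Once these are settled, the dimension count closes the argument with no further computation.
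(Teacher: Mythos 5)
Your proof is correct and follows essentially the same route as the paper's: both rest on the definition of $f^\omega_{c}$, on Lemma \ref{generator of bicyclic} combined with Lemma \ref{subfield of bicyclic} to obtain $K_0(f)\subseteq M_{c}(f+L(c)-r)=K_i(f+L(c)-r)K_j(f+L(c)-r)$ with $e_{i,j}=L(c)$, and on a degree count to force equality. The only cosmetic differences are that you compare degrees relative to $K_i(d)$ (via $K_0(f)\cap K_i(d)=K_0(r)$) where the paper compares absolute degrees over $k$, and that you run the argument for an arbitrary $i\in c$ from the start, which is if anything slightly cleaner.
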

\begin{proof}
For $f=r$, it is trivial. Hence we assume $f>r$ in the following. This means that
$|c|>1$ and $M_{c}(f+L(c)-r)$ is not cyclic.
By the definition of $f^\omega_{c}$ and Lemma \ref{generator of bicyclic}, there are $i,j\in c$ such that
$e_{i,j}=L(c)$ and
$M_{c}(f^\omega_{c}+L(c)-r)=K_i(f^\omega_{c}+L(c)-r)K_j(f^\omega_{c}+L(c)-r)$.
As $K_0(f)\subseteq M_{c}(f^\omega_{c}+L(c)-r)$, by Lemma \ref{subfield of bicyclic}
we have $K_0(f)\subseteq K_i(f+L(c)-r)K_j(f+L(c)-r)$.
By dimension reasons $K_0(f)K_i(f+L(c)-r)=K_i(f+L(c)-r)K_j(f+L(c)-r)$.

Since $M_{c}(f+L(c)-r)$ is contained in $K_i(f^\omega_{c}+L(c)-r)K_j(f^\omega_{c}+L(c)-r)$,
by Lemma \ref{generator of bicyclic} we have $M_{c}(f+L(c)-r)=K_i(f+L(c)-r)K_j(f+L(c)-r)$.
Hence $M_{c}(f+L(c)-r)=K_0(f)K_i(f+L(c)-r)$.
\end{proof}

In the following we assume that $U_r$ is nonempty.
We prove a generalization of Lemma \ref{element of G}.

\begin{prop}\label{lemma of set of generator of G}
Let $l\geq L(U_r)$ and $c\in U_r/\underset{l}{\sim}$. Let  $r\leq f\leq f^\omega_{c}$ be an integer.
Set $l_1=L(c)$, $c_1\in c/\underset{l_1+1}{\sim}$, and \[\cS_{c}=\{v\in\Omega_k|M_{c}(f+L(c)-r)^v\mbox{ is not locally cyclic at $v$}\}.\]
Then $\Omega_k\setminus \cS_{c}\subseteq(\underset{j\in c_1}{\cap}\Sigma_j^{\epsilon_0-f})\cup (\underset{j\in U_r\setminus c_1}{\cap}\Sigma_j^{\epsilon_0-f}$).
\end{prop}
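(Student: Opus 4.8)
The plan is to imitate the proof of Lemma \ref{element of G}, replacing the full index set $U_0=\cI$ with the class $c\in U_r/\underset{l}{\sim}$ and the degree $d$ with $f+L(c)-r$. First I would reduce to the case $f>r$: when $f=r$ the field $M_c(f+L(c)-r)=K_0(r)$ is cyclic, $\cS_c$ is empty, and the right-hand side is all of $\Omega_k$ since every $\Sigma_j^{\epsilon_0-r}=\Omega_k$; so assume $f>r$, which forces $|c_1|\neq |c|$, i.e. $c\setminus c_1$ is nonempty, and $M_c(f+L(c)-r)$ is bicyclic by the definition of $f^\omega_c$. Fix $v\in\Omega_k\setminus\cS_c$, so $M_c(f+L(c)-r)^v$ is locally cyclic. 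By Proposition \ref{expression of M as K_0K_i} we have $M_c(f+L(c)-r)=K_0(f)K_i(f+L(c)-r)$ for every $i\in c$, and by Lemma \ref{generator of bicyclic} (applied on level $L(c)$) we have $M_c(f+L(c)-r)=K_s(f+L(c)-r)K_t(f+L(c)-r)$ for any $s\in c_1$, $t\in c\setminus c_1$, since such $s,t$ satisfy $e_{s,t}=L(c)$ and hence realize the maximal degree.

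The heart of the argument is the following dichotomy, which is exactly the local computation in Lemma \ref{element of G}: for $v\notin\cS_c$ and a fixed pair $s\in c_1$, $t\in c\setminus c_1$, either $K_0(f)^v\otimes_{k_v}K_s(f+L(c)-r)^v$ is a product of copies of $K_s(f+L(c)-r)^v$, or the same holds with $t$ in place of $s$. To see this, let $\gamma,\gamma_s,\gamma_t$ be the local degrees of $K_0(f)^v$, $K_s(f+L(c)-r)^v$, $K_t(f+L(c)-r)^v$ over $k_v$. If the first alternative fails, then since $M_c(f+L(c)-r)^v$ is a product of cyclic extensions we must have $\gamma>\gamma_s$; if also $\gamma>\gamma_t$, then $M_c(f+L(c)-r)^v=(K_sK_t(f+L(c)-r))^v$ would be a product of fields of local degree $<\gamma$, contradicting $K_0(f)\subseteq M_c(f+L(c)-r)$; hence $\gamma\le\gamma_t$, and cyclicity of $M_c(f+L(c)-r)^v$ forces $K_0(f)^v$ to embed diagonally, so $K_0(f)^v\otimes_{k_v}K_t(f+L(c)-r)^v$ splits as a product of copies of $K_t(f+L(c)-r)^v$. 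Equivalently, writing things in terms of the sets $\Sigma^{\epsilon_0-f}_j$: since $K_j(f+L(c)-r)$ and $K_0\otimes_k K_j$ have the same splitting behaviour controlled by $K_0(\epsilon_0-(\epsilon_0-f))=K_0(f)$, the statement "$K_0(f)^v\otimes K_j(f+L(c)-r)^v$ is a product of copies" is precisely $v\in\Sigma^{\epsilon_0-f}_j$.

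Next I would upgrade the pairwise dichotomy to the stated union of two intersections. Fix $s_0\in c_1$ and consider first the case $v\in\Sigma^{\epsilon_0-f}_{s_0}$, i.e.\ $K_0(f)^v\otimes K_{s_0}(f+L(c)-r)^v$ splits. For any other $j\in c_1$, we have $e_{j,s_0}\ge L(c)+1$ (both lie in the same class of $c/\underset{L(c)+1}{\sim}$), so $K_0(f)K_j(f+L(c)-r)$ and $K_0(f)K_{s_0}(f+L(c)-r)$ sit inside a common bicyclic extension in which they have comparable cyclic images over $K_0(f)$; using Lemma \ref{locally cyclic condition} together with the local cyclicity of $M_c(f+L(c)-r)^v$ one deduces $v\in\Sigma^{\epsilon_0-f}_j$, hence $v\in\underset{j\in c_1}{\cap}\Sigma^{\epsilon_0-f}_j$. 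Symmetrically, for any $t\in U_r\setminus c_1$ we have $e_{s_0,t}=L(U_r)\le L(c)$ (here I use $l\ge L(U_r)$ and that $c_1$ is a single class at level $L(c)+1$), so $M_c(f+L(c)-r)$, being a subfield of $K_0(f)K_t(f+L(c)-r)$ via $M_c(f+L(c)-r)=K_0(f)K_{s_0}(f+L(c)-r)\subseteq K_0(f)K_t(f+L(c)-r)$ when $e_{s_0,t}$ is small enough, again forces the splitting condition at $t$; iterating over $U_r\setminus c_1$ via Lemma \ref{M_U equal M_U'}-type comparisons gives $v\in\underset{j\in U_r\setminus c_1}{\cap}\Sigma^{\epsilon_0-f}_j$. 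The case $v\notin\Sigma^{\epsilon_0-f}_{s_0}$ is handled by the same reasoning with the roles of $c_1$ and $U_r\setminus c_1$ exchanged, using the second alternative of the dichotomy at the pair $(s_0,t)$. I expect the main obstacle to be bookkeeping the inclusions among the various bicyclic fields $K_0(f)K_j(f+L(c)-r)$ for $j$ ranging over $c_1$, $c\setminus c_1$, and $U_r\setminus c_1$ — in particular checking that the level hypothesis $l\ge L(U_r)$ really does make $M_c(f+L(c)-r)$ a subfield of $K_0(f)K_t(f+L(c)-r)$ for every $t\in U_r\setminus c_1$, which is what lets the single local splitting at $s_0$ propagate to all of $U_r\setminus c_1$ — rather than the local field theory, which is routine given Lemmas \ref{locally cyclic condition} and \ref{generator of bicyclic}.
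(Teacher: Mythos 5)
There is a genuine gap, and you have in fact pointed at it yourself: the inclusion $M_{c}(f+L(c)-r)\subseteq K_0(f)K_t(f+L(c)-r)$ that you need in order to propagate the local splitting to \emph{every} $t\in U_r\setminus c_1$ is false whenever $t$ lies outside $c$ (not merely outside $c_1$). Write $d=f+L(c)-r$ and take $s_0\in c_1$, $t\in U_r\setminus c$. Since $c$ is a class of $U_r/\underset{l}{\sim}$ and $t\notin c$, we have $e_{s_0,t}<l\leq L(c)\leq d$, so $[K_{s_0}(d)K_t(d):k]=p^{2d-e_{s_0,t}}>p^{2d-L(c)}=p^{2f+L(c)-2r}$, whereas $[K_0(f)K_t(d):k]=p^{f+d-r}=p^{2f+L(c)-2r}$ because $K_0\cap K_t=K_0(r)$. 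Hence $K_{s_0}(d)\not\subseteq K_0(f)K_t(d)$ and a fortiori $M_c(d)=K_0(f)K_{s_0}(d)\not\subseteq K_0(f)K_t(d)$. (Your assertion that $e_{s_0,t}=L(U_r)$ for all $t\in U_r\setminus c_1$ is also not right: for $t\in c\setminus c_1$ one has $e_{s_0,t}=L(c)$ exactly, and for $t\in U_r\setminus c$ one only knows $L(U_r)\leq e_{s_0,t}<l$.) Your direct argument therefore only covers the indices in $c\setminus c_1$, which is precisely the base case $c=U_r$ of the paper's proof (and is essentially Lemma \ref{element of G}). The missing idea is the paper's induction on $l$: for $j\in U_r\setminus c$ one takes the class $c_0\in U_r/\underset{h}{\sim}$ with $c\varsubsetneq c_0$ and $L(c_0)=h<L(c)$, notes $M_{c_0}(f+h-r)\subseteq M_c(f+L(c)-r)$ (so $\cS_{c_0}\subseteq\cS_c$), and invokes the statement already proved for $c_0$ to conclude $v\in\underset{j\in U_r\setminus c}{\cap}\Sigma_j^{\epsilon_0-f}$; only the indices inside $c$ are treated by the direct local argument with Lemma \ref{locally cyclic condition}.

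A secondary structural problem: you split into cases according to whether $v\in\Sigma_{s_0}^{\epsilon_0-f}$ for one fixed $s_0$, and in the first case you must prove that this single membership propagates to all of $c_1$ ("comparable cyclic images over $K_0(f)$"). That step is not justified by anything you cite -- $v\in\Sigma_{s_0}^{\epsilon_0-f}$ is a condition on the completions of the full field $K_{s_0}$, and Lemma \ref{locally cyclic condition} gives no implication between $\Sigma_{s_0}^{\epsilon_0-f}$ and $\Sigma_j^{\epsilon_0-f}$ for two indices $s_0,j$ in the \emph{same} class $c_1$. The paper avoids needing any such claim by arguing contrapositively: assume $v\notin\underset{j\in c_1}{\cap}\Sigma_j^{\epsilon_0-f}$, fix a witness $s\in c_1$ with $v\notin\Sigma_s^{\epsilon_0-f}$, and then show $v\in\Sigma_j^{\epsilon_0-f}$ for every $j\in U_r\setminus c_1$ (via Lemma \ref{locally cyclic condition} for $j\in c\setminus c_1$ and via the induction hypothesis for $j\in U_r\setminus c$). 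I would restructure your proof around that dichotomy and add the induction on $l$; the local computation you carry out is fine as far as it goes, but it only establishes the base case.
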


\begin{proof}

If $f=r$, then clearly $v\in \Sigma^{\epsilon_0-r}_j$ for all $j\in U_r$ and all $v\in\Omega_k$.
Hence in the following we assume that  $f>r$, which implies that $|U_r|>1.$
Note that for any equivalence class $c_0\in U_r/\underset{h}{\sim}$ with $L(U_r)\leq h\leq l$ and $c_0\supseteq c$,
we have $f\leq f^\omega_{c}\leq f^\omega_{c_0}$.

We prove the statement by induction on $l$.
Consider the case where $l=L(U_r)$.
By definition $c=U_r$.
As there are more than one equivalence class in $U_r/\underset{l+1}{\sim}$, the set $U_r\setminus c_1$ is non-empty.  By Lemma \ref{generator of bicyclic} we have $M_{U_r}(f+l-r)=K_s({f+l-r})K_t({f+l-r})$ for any $s\in c_1$ and $t\in U_r\setminus c_1$. By Proposition \ref{expression of M as K_0K_i} we have
$K_0({f})K_s({f+l-r})=M_{U_r}({f+l-r})
=K_0({f})K_t({f+l-r})$.

By Lemma \ref{locally cyclic condition} for all $v\in \Omega_k\setminus \cS_{U_r}$,
either $K_0(f)\otimes_k K_s(f+l-r)^v$ is a product of copies of $K_s(f+l-r)^v$ or $K_0(f)\otimes_k K_t(f+l-r)^v$ is a product of copies of $K_t(f+l-r)^v$. Hence $\Omega_k\setminus \cS_{U_r}\subseteq(\underset{j\in c_1}{\cap}\Sigma_j^{\epsilon_0-f})\cup(\underset{j\in U_r\setminus c_1}{\cap}\Sigma_j^{\epsilon_0-f})$, and the statement is true for $l=L(U_r)$.

Suppose that the statement is true for  $l=h>L(U_r)$.
Let $l=h+1$.
If $c$ is also an equivalence class of $U_r/\underset{h}{\sim}$, then the statement is true by the induction hypothesis.

Now suppose that $c\notin U_r/\underset{h}{\sim}$.
Let $v\in\Omega_k\setminus \cS_{c}$. It suffices to show that if $v\notin\underset{j\in c_1}{\cap}\Sigma_j^{\epsilon_0-f}$, then $v\in\underset{j\in U_r\setminus c_1}{\cap}\Sigma_j^{\epsilon_0-f}$.
Suppose that $v\notin(\underset{j\in c_1}{\cap}\Sigma_j^{\epsilon_0-f})$.
We first prove that $v\in (\underset{j\in c\setminus c_1}{\cap}\Sigma_j^{\epsilon_0-f})$.

Let $s\in c_1$ such that $v\notin \Sigma_s^{\epsilon_0-f}$.
By Lemma \ref{generator of bicyclic} the field $M_{c}(f+l_1-r)$ is equal to $K_{s}({f+l_1-r})K_j({f+l_1-r})$ for any $j\in c\setminus c_1$.
Hence $M_{c}(f+l_1-r)=K_0({f})K_j(f+l_1-r)=K_0({f})K_s({f+l_1-r})$.
Since $M_{c}(f+l_1-r)^v$ is a product of cyclic extensions of $k_v$ and $v\notin \Sigma_s^{\epsilon_0-f}$,
by Lemma \ref{locally cyclic condition} we have
$K_0({f})\otimes_k K_j({f+l_1-r})^v$ is a product of copies of $K_j({f+l_1-r})^v$.
This implies $v\in (\underset{j\in c\setminus c_1}{\cap}\Sigma_j^{\epsilon_0-f})$.

Next we show that $v\in(\underset{j\in U_r\setminus c}{\cap}\Sigma_j^{\epsilon_0-f})$.
As $c\notin U_r/\underset{h}{\sim}$, there is some $c_o\in U_r/\underset{h}{\sim}$ such that $c\varsubsetneq c_0$.
This implies that $L(c_0)=h$ and $c\in c_0/\underset{h+1}{\sim}$.
By induction hypothesis  $\Omega_k\setminus \cS_{c_0}\subseteq (\underset{j\in U_r\setminus c}{\cap}\Sigma_j^{\epsilon_0-f})\cup(\underset{j\in c}{\cap}\Sigma_j^{\epsilon_0-f})$.

As $h<l_1$, by Proposition \ref{expression of M as K_0K_i} we have $M_{c_0}(f+h-r)\subseteq M_{c}(f+l_1-r)$.
Hence
$\Omega_k\setminus\cS_{c}\subseteq \Omega_k\setminus\cS_{c_0}$,
which implies that $v\in\Omega_k\setminus \cS_{c_0}$.

Since  $c_1\subseteq c$ and   $v\notin(\underset{j\in c_1}{\cap}\Sigma_j^{\epsilon_0-f})$, $v\notin(\underset{j\in c}{\cap}\Sigma_j^{\epsilon_0-f})$.
Hence $v\in\underset{j\in U_r\setminus c}{\cap}\Sigma_j^{\epsilon_0-f}$ by induction hypothesis.
Combining this with the fact that $v\in (\underset{j\in c\setminus c_1}{\cap}\Sigma_j^{\epsilon_0-f})$, we have $v\in (\underset{j\in U_r\setminus c_1}{\cap}\Sigma_j^{\epsilon_0-f})$.

\end{proof}

\begin{coro}\label{set of generator of  G omega}
Let $l\geq L(U_r)$ and $c\in U_r/\underset{l}{\sim}$.
Set $l_1=L(c)$, $c_1\in c/\underset{l_1+1}{\sim}$ and
 $x^\omega_{c_1}=(x_i)_{i\in U_r}\in \underset{i\in U_r}{\oplus}\ent/p^{\epsilon_0-r}\ent$ as follows:
\begin{equation}
x_j=\left\{
\begin{array}{ll}
p^{\epsilon_0-f^\omega_{c}}, & \hbox{for all $j\in c_1,$}\\
0, & \hbox{otherwise.}
\end{array}
\right .
\end{equation}
Then $x^\omega_{c_1}\in G_\omega(K_0,K_{U_r})$.
\end{coro}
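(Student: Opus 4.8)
The plan is to read the corollary off from Proposition \ref{lemma of set of generator of G} after translating membership in $G_\omega(K_0,K_{U_r})$ into a statement about the sets $\Omega(I_n(-))$. Since every $i\in U_r$ has $e_i=\epsilon_0-r$, an element $x$ lies in $G_\omega(K_0,K_{U_r})$ precisely when $\cS_x=\Omega_k\setminus\bigcup_{n\in\ent/p^{\epsilon_0-r}\ent}\Omega(I_n(x))$ is a finite set.

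First I would dispose of the case $f^\omega_{c}=r$: then $p^{\epsilon_0-f^\omega_{c}}\equiv 0$ in $\ent/p^{\epsilon_0-r}\ent$, so $x^\omega_{c_1}=0\in G_\omega(K_0,K_{U_r})$ and there is nothing to prove. So assume $f^\omega_{c}>r$. Then $|c|>1$, and since $l_1=L(c)$ there is more than one class in $c/\underset{l_1+1}{\sim}$, so $c_1$ is a proper nonempty subset of $c$; in particular $c_1\subsetneq U_r$ and $U_r\setminus c_1\neq\emptyset$. Put $n_0=p^{\epsilon_0-f^\omega_{c}}$, which is a nonzero element of $\ent/p^{\epsilon_0-r}\ent$ because $r< f^\omega_{c}\leq\epsilon_0$. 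Inside $U_r$ every coordinate of $x^\omega_{c_1}$ lies in the single group $\ent/p^{\epsilon_0-r}\ent$, so $n\succeq x_i$ holds if and only if $n=x_i$; hence $I_0(x^\omega_{c_1})=U_r\setminus c_1$, $I_{n_0}(x^\omega_{c_1})=c_1$, and $I_n(x^\omega_{c_1})=\emptyset$ for every other $n$. Using $\delta(0,p^{\epsilon_0-f^\omega_{c}})=\delta(p^{\epsilon_0-f^\omega_{c}},0)=\epsilon_0-f^\omega_{c}$, the definition \eqref{e24} of $\Omega(I_n)$ gives
\[
\Omega(I_0(x^\omega_{c_1}))=\underset{i\in c_1}{\cap}\Sigma_i^{\epsilon_0-f^\omega_{c}},\qquad
\Omega(I_{n_0}(x^\omega_{c_1}))=\underset{i\in U_r\setminus c_1}{\cap}\Sigma_i^{\epsilon_0-f^\omega_{c}}.
\]

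Next I would apply Proposition \ref{lemma of set of generator of G} to the given $l$, $c$, $c_1$ with $f=f^\omega_{c}$. Writing $\cS_{c}$ for the set of places at which $M_{c}(f^\omega_{c}+L(c)-r)$ is not locally cyclic (a finite set, since only the finitely many places ramifying in that field can contribute), its conclusion reads
\[
\Omega_k\setminus\cS_{c}\subseteq\Big(\underset{i\in c_1}{\cap}\Sigma_i^{\epsilon_0-f^\omega_{c}}\Big)\cup\Big(\underset{i\in U_r\setminus c_1}{\cap}\Sigma_i^{\epsilon_0-f^\omega_{c}}\Big),
\]
which by the computation above equals $\Omega(I_0(x^\omega_{c_1}))\cup\Omega(I_{n_0}(x^\omega_{c_1}))\subseteq\bigcup_n\Omega(I_n(x^\omega_{c_1}))$. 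Hence $\cS_{x^\omega_{c_1}}\subseteq\cS_{c}$ is finite, so $x^\omega_{c_1}\in G_\omega(K_0,K_{U_r})$.

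The main point is that Proposition \ref{lemma of set of generator of G} already supplies all the genuine content, so no serious obstacle arises here; the corollary is essentially its restatement in terms of $G_\omega$. What needs care is only the bookkeeping: knowing $r\leq f^\omega_{c}\leq\epsilon_0$, so that $\epsilon_0-f^\omega_{c}$ is an admissible superscript for $\Sigma_i^{\cdot}$ and, together with the standing assumption $f^\omega_{c}>r$ of the nontrivial case, $n_0$ is a genuine nonzero class of $\ent/p^{\epsilon_0-r}\ent$; correctly identifying which classes $I_n(x^\omega_{c_1})$ are nonempty and evaluating the two relevant values of $\delta$; and checking that the hypotheses $r\leq f\leq f^\omega_{c}$ and $c_1\in c/\underset{L(c)+1}{\sim}$ of Proposition \ref{lemma of set of generator of G} are met verbatim by the choice $f=f^\omega_{c}$.
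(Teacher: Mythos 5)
Your proof is correct and follows exactly the route the paper intends: the paper's own proof is the one-line remark that the corollary is a direct consequence of Proposition \ref{lemma of set of generator of G}, and your argument is precisely that deduction with the bookkeeping (identification of the nonempty classes $I_n(x^\omega_{c_1})$, the values of $\delta$, and the finiteness of $\cS_c$) written out. Nothing further is needed.
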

\begin{proof}
It is a direct consequence of Proposition \ref{lemma of set of generator of G}.
\end{proof}

\begin{coro}\label{set of generator of  G}
Keep the notation as in Cor.\ref{set of generator of  G omega}.
Set $x_{c_1}=(x_i)_{i\in U_r}\in \underset{i\in U_r}{\oplus}\ent/p^{\epsilon_0-r}\ent$ as follows:
\begin{equation}
x_j=\left\{
\begin{array}{ll}
p^{\epsilon_0-f_{c}}, & \hbox{for all $j\in c_1,$}\\
0, & \hbox{otherwise.}
\end{array}
\right .
\end{equation}
Then $x_{c_1}\in G(K_0,K_{U_r})$.
\end{coro}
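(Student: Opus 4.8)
The plan is to follow verbatim the route used for the previous corollary (Corollary \ref{set of generator of  G omega}), the only additional input being that, by the defining property of the degree of freedom $f_c$, the exceptional set produced by Proposition \ref{lemma of set of generator of G} is in fact \emph{empty}. Write $x=x_{c_1}$; by the description of $G(K_0,K_{U_r})$ recalled in \S 2.3 it suffices to prove that
\[
\cS_x=\Omega_k\setminus\underset{n\in\ent/p^{\epsilon_0-r}\ent}{\bigcup}\Omega(I_n(x))=\emptyset ,
\]
i.e. that every place of $k$ belongs to some cell $\Omega(I_n(x))$.

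If $f_c=r$ then $p^{\epsilon_0-f_c}=0$ in $\ent/p^{\epsilon_0-r}\ent$, hence $x=0$ and $\cS_x=\emptyset$ trivially; so assume $f_c>r$ (recall $r\le f_c\le f^\omega_c\le\epsilon_0$, the inequality $f_c\ge r$ holding because $f=r$ always satisfies the defining conditions of the degree of freedom, as $M_c(L(c))=K_i(L(c))$ is cyclic and contains $K_0(r)$). First I would read off the partition $I(x)$ of $U_r$. Since for $i,j\in U_r$ the relation $n\succeq a_i$ is simply equality in $\ent/p^{\epsilon_0-r}\ent$, one gets $I_0(x)=U_r\setminus c_1$, $I_{p^{\epsilon_0-f_c}}(x)=c_1$, and $\delta(0,p^{\epsilon_0-f_c})=\delta(p^{\epsilon_0-f_c},0)=\epsilon_0-f_c$ (using $f_c\ge r$, so that $\epsilon_0-f_c\le\epsilon_0-r$). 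Inserting these into $\Omega(I_n(x))=\underset{i\notin I_n(x)}{\cap}\Sigma_i^{\delta(n,a_i)}$ yields
\[
\Omega(I_0(x))=\underset{j\in c_1}{\cap}\Sigma_j^{\epsilon_0-f_c},\qquad \Omega(I_{p^{\epsilon_0-f_c}}(x))=\underset{j\in U_r\setminus c_1}{\cap}\Sigma_j^{\epsilon_0-f_c}.
\]

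Next I would apply Proposition \ref{lemma of set of generator of G} with the integer $f=f_c$ (admissible since $r\le f_c\le f^\omega_c$), obtaining
\[
\Omega_k\setminus\cS_c\ \subseteq\ \Big(\underset{j\in c_1}{\cap}\Sigma_j^{\epsilon_0-f_c}\Big)\cup\Big(\underset{j\in U_r\setminus c_1}{\cap}\Sigma_j^{\epsilon_0-f_c}\Big),
\]
where $\cS_c=\{v\in\Omega_k\mid M_c(f_c+L(c)-r)^v\ \text{is not locally cyclic}\}$. By the definition of the degree of freedom $f_c$, the field $M_c(f_c+L(c)-r)$ is locally cyclic, hence $\cS_c=\emptyset$. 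Combining this with the two equalities above gives $\Omega_k\subseteq\Omega(I_0(x))\cup\Omega(I_{p^{\epsilon_0-f_c}}(x))\subseteq\underset{n}{\bigcup}\Omega(I_n(x))$, so $\cS_x=\emptyset$ and $x_{c_1}\in G(K_0,K_{U_r})$ (a fortiori in $G_\omega(K_0,K_{U_r})$, since $\emptyset$ is finite).

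I do not anticipate a real obstacle: once Proposition \ref{lemma of set of generator of G} is in hand, the argument is pure bookkeeping on the partition $I(x)$ together with the single new remark $\cS_c=\emptyset$. The points that need care are the degenerate case $f_c=r$ and, more importantly, verifying that the two cells $\Omega(I_0(x))$ and $\Omega(I_{p^{\epsilon_0-f_c}}(x))$ reproduce \emph{exactly} the two terms on the right-hand side of the inclusion supplied by Proposition \ref{lemma of set of generator of G}; this comes down to the elementary identity $\delta(0,p^{\epsilon_0-f_c})=\epsilon_0-f_c$, which in turn uses $f_c\ge r$.
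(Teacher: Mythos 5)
Your proof is correct and follows exactly the route the paper intends: the paper's own proof is the single line ``It is a direct consequence of Proposition \ref{lemma of set of generator of G}'', and your write-up simply makes explicit the bookkeeping on the partition $I(x)$ together with the key observation that $\cS_c=\emptyset$ when $f=f_c$, since $M_c(f_c+L(c)-r)$ is locally cyclic by the definition of the degree of freedom.
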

\begin{proof}
It is a direct consequence of Proposition \ref{lemma of set of generator of G}.
\end{proof}

\section{The computation of $\sha^2_\omega(k,\hat{T}_{L/k})$ and $\sha^2(k,\hat{T}_{L/k})$}
In this section we use the (algebraic) patching degrees and
the (algebraic) degrees of freedom to describe the groups
$\sha^2(k,\hat{T}_{L/k})$ and $\sha_\omega^2(k,\hat{T}_{L/k})$.

\subsection{Generators of algebraic patchable subgroups and patchable subgroups}
For $U_r$ nonempty, set $$x^\omega_{U_r}=(p^{\epsilon_0-\Delta^\omega_r})_{i\in U_r}\in G_\omega(K_0,K_{U_r});$$ and
$$x_{U_r}=(p^{\epsilon_0-\Delta_r})_{i\in U_r}\in G(K_0,K_{U_r}).$$
In the following we show that  the elements $x_{c_1}$'s (resp. $x^\omega_{c_1}$) defined
in Corollary \ref{set of generator of G} (resp. Corollary \ref{set of generator of  G omega}) are generators of $\tilde{G}(K_0,K_{U_r})$ (resp. $\tilde{G}_\omega(K_0,K_{U_r})$).

\begin{prop}\label{a system of generator of patchable}
For a nonempty $U_r$, we have the following:
\begin{enumerate}
  \item The algebraic patchable subgroup $\tilde{G}_\omega(K_0,K_{U_r})$ is generated by $x^\omega_{c}$ for all $l\geq L(U_r)$ and $c\in U_r/\underset{l}{\sim}$.
  \item The patchable subgroup $\tilde{G}(K_0,K_{U_r})$ is generated by $x_{c}$ for all
  $l\geq L(U_r)$ and $c\in U_r/\underset{l}{\sim}$.
\end{enumerate}
\end{prop}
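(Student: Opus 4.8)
The plan is to prove (1); assertion (2) is strictly parallel, obtained by replacing the condition ``$\cS_\bullet$ finite'' everywhere by ``$\cS_\bullet$ empty'', the algebraic patching degree and algebraic degrees of freedom $\Delta^\omega_r,f^\omega_\bullet$ by $\Delta_r,f_\bullet$, and Corollary \ref{set of generator of  G omega}, Proposition \ref{alg_patch x} by Corollary \ref{set of generator of  G}, Proposition \ref{patch}.

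The inclusion ``$\supseteq$'' is short. Each $x^\omega_{c_1}$ of Corollary \ref{set of generator of  G omega} lies in $G_\omega(K_0,K_{U_r})$ by that corollary; its non-zero entries all equal $p^{\epsilon_0-f^\omega_{c}}$, so $\delta(0,(x^\omega_{c_1})_i)=\epsilon_0-f^\omega_{c}\ge\epsilon_0-\Delta^\omega_r$, because $f^\omega_{c}\le f^\omega_{U_r}\le\Delta^\omega_r$ by the recursive definition of $f^\omega_\bullet$ and Definition \ref{alg deg of freedom of U_r}; hence $x^\omega_{c_1}$ is algebraically patchable. Likewise $x^\omega_{U_r}\in G(K_0,K_{U_r})\subseteq G_\omega(K_0,K_{U_r})$ and $\delta(0,p^{\epsilon_0-\Delta^\omega_r})=\epsilon_0-\Delta^\omega_r$, so $x^\omega_{U_r}\in\tilde G_\omega(K_0,K_{U_r})$. (If $r=0$ only the generators not supported on the class of the distinguished index $1$ occur, which will be automatic below.)

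For ``$\subseteq$'' I would use a terminating reduction. Let $0\ne x\in\tilde G_\omega(K_0,K_{U_r})$, put $\epsilon_0-d=\min_{i\in U_r}\delta(0,x_i)$, so $r<d\le\Delta^\omega_r$ (the last inequality by patchability), and let $c_0=\{i\in U_r:\delta(0,x_i)=\epsilon_0-d\}$. Forming $a_x\in G^0_\omega(K_0,K')$ (Proposition \ref{alg_patch x}) and applying the step-down of Lemma \ref{definition of a'} together with Corollary \ref{step of a}, then restricting to the $U_r$-coordinates, produces the element $y=x_j\cdot\mathbf 1_{c_0}\in\tilde G_\omega(K_0,K_{U_r})$ for the minimal $j\in c_0$, with $\delta(0,x_j)=\epsilon_0-d$. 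The crux is to see that $y$ is a $\ent$-linear combination of the generators: writing out ``$\cS_y$ is finite'' and arguing as in the proofs of Proposition \ref{bicyclic field}, Lemma \ref{projection} and Proposition \ref{lemma of set of generator of G} --- Chebotarev's density theorem combined with Lemmas \ref{bicyclic galois group}, \ref{locally cyclic condition}, \ref{subfield of bicyclic}, \ref{generator of bicyclic} --- forces $c_0$ to be a union of $l$-equivalence classes of $U_r$ split off from its complement at a level $\le f^\omega_{c'}$ for the appropriate parent class $c'$ (or, in the degenerate case $c_0=U_r$, at a level $\le\Delta^\omega_r$). Thus $d\le f^\omega_{c'}$, and $y=u\sum_{c_1}p^{f^\omega_{c'}-d}\,x^\omega_{c_1}$, with the sum over the sub-classes $c_1$ whose union is $c_0$ and $u$ an integer prime to $p$ (resp. $y=u\,p^{\Delta^\omega_r-d}\,x^\omega_{U_r}$).

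It remains to iterate. Since $\tilde G_\omega(K_0,K_{U_r})$ is a subgroup, $x-y$ again lies in it; off $c_0$ the coordinates of $x$ are unchanged, so their $\delta(0,\cdot)$ still exceed $\epsilon_0-d$, while on $c_0$ the new coordinates $x_i-x_j$ satisfy $\delta(0,x_i-x_j)\ge\epsilon_0-d$ and $\delta(0,x_j-x_j)=\epsilon_0-r>\epsilon_0-d$. Hence either $\min_i\delta(0,(x-y)_i)>\epsilon_0-d$, so $d$ drops, or it equals $\epsilon_0-d$ but the locus attaining it is contained in $c_0\setminus\{j\}$, so $|c_0|$ drops; in either case the pair $(d,|c_0|)$, recomputed for $x-y$, strictly decreases in lexicographic order. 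As $d>r$ is bounded below, iteration terminates, necessarily at $0$, and $x$ is the sum of the step-down elements produced, each a $\ent$-combination of the generators. The main obstacle is the third paragraph, i.e. converting ``$\cS_y$ finite'' into the stated field-theoretic description of $c_0$ and the bound $d\le f^\omega_{c'}$; this is a careful, bookkeeping-heavy reprise of the cyclic-subfield lemmas of Sections 3 and 5 rather than anything genuinely new.
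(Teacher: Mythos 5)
Your overall strategy coincides with the paper's: a descent in which one subtracts suitable multiples of the proposed generators so that the support (or your lexicographic pair $(d,|c_0|)$) strictly decreases, the engine being a bound of the form $d\leq f^\omega_{c'}$. The ``$\supseteq$'' inclusion and the termination bookkeeping are fine. But there is a genuine gap at the heart of the argument: the entire content of the proposition is the implication ``$\cS_y$ finite $\Rightarrow$ $d\leq f^\omega_{c'}$ (and $K_0(d)\subseteq M_{c'}(d+L(c')-r)$ with $M_{c'}(d+L(c')-r)$ inside a bicyclic field)'', and you assert it with a pointer to ``arguing as in the proofs of Proposition \ref{bicyclic field}, Lemma \ref{projection} and Proposition \ref{lemma of set of generator of G}'' rather than proving it. This is not routine bookkeeping: the paper's proof spends almost all of its length here, choosing $l$ minimal with some class $c\in U_r/\underset{l}{\sim}$ contained in the minimal locus $J'$, then for each parent class $c_0$ selecting $i\in c_0\setminus J_0$ and $j\in J_0$ with $e_{i,j}$ maximal, invoking Proposition \ref{bicyclic field} to produce the bicyclic field $F_{d,i,j}=K_0(d)K_i(d+e_{i,j}-r)=K_0(d)K_j(d+e_{i,j}-r)$, showing $M_{c_0}(d+L(c_0)-r)\subseteq F_{d,i,j}$, and then carefully exhibiting generators $s\notin J_0$, $t\in J_0$ of $M_{c_0}(d+L(c_0)-r)$ to force $d\leq f^\omega_{c_0}$. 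None of this is reproduced or replaced in your proposal.

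A second, smaller problem: your reduction subtracts the whole block $y=x_j\cdot\mathbf 1_{c_0}$ at once, so you must (i) show that $c_0$ is a union of equivalence classes $c_1$ whose generators $x^\omega_{c_1}$ all have entry $p^{\epsilon_0-f^\omega}$ dividing $x_j$, i.e.\ you need $d\leq f^\omega$ \emph{simultaneously for every} relevant parent class, and (ii) justify the formula $y=u\sum_{c_1}p^{f^\omega_{c'}-d}x^\omega_{c_1}$, which tacitly assumes all these subclasses share one parent $c'$ and hence one value of $f^\omega_{c'}$. Neither is established, and neither is needed in the paper's version, which kills a single coordinate per step: it only requires the bound for the \emph{one} class of smallest level contained in $J'$, and lets the induction on the size of the support absorb the rest. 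If you keep your block subtraction you must prove the stronger uniform statement; otherwise you should revert to the one-class-at-a-time descent and then supply in full the $d\leq f^\omega_{c_0}$ argument sketched above.
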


\begin{proof}
Let $x=(x_i)_{i\in U_r}\in \tilde{G}_\omega(K_0,K_{U_r})\subseteq\underset{i\in U_r}{\oplus}(\ent/p^{\epsilon_0-r}\ent)$.
Let $t$ be the smallest index in $U_r$.
After modifying $x$ by a multiple of $x_{U_r}$, we can assume $x_t=0$.

Let $I(x)=(I_0(x),...,I_{p^{\epsilon_0-r}-1}(x))$ be the partition of $U_r$ associated to $x$.
Set $J=U_r\setminus I_0(x)$.
We prove the proposition by induction on $|J|$.
If $|J|=0$, then it is clear that $x=0\in \langle x^\omega_{c}\rangle$.
Let $h$ be a positive integer,
and  suppose that the statement is true for all $|J|<h$.

For $|J|=h$,
let $\epsilon_0-d=\underset{i\in J}{\min}\{\delta(0,x_i)\}$.
As $x$ is patchable, we have $d\leq \Delta^\omega_r$.
Let $J'=\{i\in J|\ \delta(0,x_i)=\epsilon_0-d\}$.
Let $l$ be the smallest integer such that there is $c\in U_r/\underset{l}{\sim}$ contained in $J'$.
Pick $i\in c$.
We claim that for
all $r\leq l_0< l$ and $c_0\in U_r/\underset{l_0}{\sim}$ containing $c$, the field $M_{c_0}(d+{{L}}(c_0)-r)$ is a subfield of a bicyclic extension.

By the choice of $l_0$, we have
$c_0\nsubseteq J'$.
Set $J_0=J'\cap c_0$.
Pick $j\in J_0$ and $i\in c_0\setminus J_0$ such that $$e_{i,j}=\max\{e_{s,t}|\ s\in J_0,  t\in c_0\setminus J_0\}.$$
By Lemma \ref{bicyclic field} we  have $F_{d,i,j}=K_0(d)K_i({d+e_{i,j}-r})= K_0(d)K_{j}({d+e_{i,j}-r})$.
Again by Lemma \ref{bicyclic field}
for any $s\in c_0\setminus J_0$, we  have $F_{d,s,j}=K_0(d)K_j({d+e_{s,j}-r})\subseteq F_{d,i,j}$.
Similarly $F_{d,i,s}\subseteq F_{d,i,j}$ for all $s\in J_0$.

Note that by definition $L(c_0)=\min\{e_{t,t'}|\ t,t'\in c_0\}.$
Since $L(c_0)\leq e_{s,j}\leq e_{i,j}$, we have  $K_s({d+L(c_0)-r})K_j({d+L(c_0)-r})\subseteq F_{d,s,j}$ for all
$s\in c_0\setminus J_0$.

By a similar argument, we have $K_s({d+L(c_0)-r})K_{i}({d+L(c_0)-r})\subseteq F_{d,i,s}$ for all $s\in J_0$.
Hence $M_{c_0}(d+L(c_0)-r)\subseteq F_{d,i,j}$.

Next we show that $d\leq f^\omega_{c_0}$.
As $M_{c_0}(d+L(c_0)-r)$ is a subfield of a bicyclic extension, by Lemma \ref{generator of bicyclic}
there are $s$ and $t$ such that
$M_{c_0}(d+L(c_0)-r)=K_{s}(d+L(c_0)-r)K_{t}(d+L(c_0)-r)$.
Moreover we can choose $s$, $t\in c_0$ such that $s\notin J_0$ and $t\in J_0$.

To see this, first suppose that
$J_0$ is contained in some $c'\in c_0/\underset{L(c_0)+1}{\sim}$. Then we can
pick $s\in c_0\setminus c'$ and pick $t\in J_0$.

Suppose that $J_0\nsubseteq c'$ for any $c'\in c_0/\underset{L(c_0)+1}{\sim}$.
Then pick $s\in c_0\setminus J_0$.
Let  $c'$ be the class of  $c_0/\underset{L(c_0)+1}{\sim}$ containing $s$.
Since $J_0$ is not contained in $c'$,  $J_0\setminus c'$ is nonempty.
Pick $t\in J_0\setminus c'$.
Then $e_{s,t}=L(c_0)$. By Lemma \ref{generator of bicyclic}, we have $M_{c_0}(d+L(c_0)-r)=F_{d,s,t}$.

By Lemma \ref{bicyclic field} $K_0(d)\subseteq M_{c_0}(d+L(c_0)-r)$,
so we have $d\leq f^\omega_{c_0}$.
In particular for $c_0\in U_r/\underset{l-1}{\sim}$ containing $c$, we have $d\leq f^\omega_{c_0}$.
By Corollary \ref{set of generator of  G omega}, there is an integer $n$ such that the $i$-th coordinate of $nx^\omega_{c}$ is $x_i$.
Since $c\subseteq J'$, the number of non-zero coordinates of $x-nx^\omega_{c}$ decreases by at least one.
By the induction hypothesis, the element $x-nx^\omega_{c}$ is generated by patchable diagonal elements and $x^\omega_{c'}$ for $l'\geq L(U_r)$ and $c'\in U_r/\underset{l'}{\sim}$.
Statement (1) then follows.

Suppose further that $x\in\tilde{G}(K_0,K_{U_r})$.
Then by Lemma \ref{bicyclic field} $M_{c_0}(d+L(c_0)-r)=F_{d,s,t}$ is locally cyclic.
Hence $d\leq f_{c_0}$.
By similar argument we get statement (2).
\end{proof}

\begin{theo}\label{description of patchable group}
Suppose that $U_r$ is nonempty. Then
\begin{enumerate}
  \item $\tilde{G}_\omega(K_0,K_{U_r})\simeq \ent/p^{\Delta^\omega_r-r}\ent\oplus \underset{l\geq L(U_r)}{\oplus} \underset{c\in U_r/\underset{l}{\sim} }{\oplus}(\ent/p^{f^\omega_{c}-r}\ent)^{n_{l+1}(c)-1}.$
  \item $\tilde{G}(K_0,K_{U_r})\simeq \ent/p^{\Delta_r-r}\ent\oplus \underset{l\geq L(U_r)}{\oplus} \underset{c\in U_r/\underset{l}{\sim} }{\oplus}(\ent/p^{f_{c}-r}\ent)^{n_{l+1}(c)-1}.$
\end{enumerate}

\end{theo}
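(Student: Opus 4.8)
The plan is to upgrade the generation statement of Proposition~\ref{a system of generator of patchable} to the asserted decomposition, by checking that the stated generators satisfy only the evident relations. I will write out part~(1); part~(2) is word for word the same with $\Delta^\omega_r$, $f^\omega_c$, $x^\omega$ replaced by $\Delta_r$, $f_c$, $x$, using Proposition~\ref{a system of generator of patchable}(2) and Corollary~\ref{set of generator of  G} in place of their $\omega$-versions. Organise the equivalence classes into a tree: $U_r$ is the root, at level $L(U_r)$; its children are the classes of $U_r/\underset{L(U_r)+1}{\sim}$; each child is again a class splitting at its own level; one descends to the singletons. Call a class \emph{internal} if it has more than one element. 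A class $c\in U_r/\underset{l}{\sim}$ always satisfies $l\leq L(c)$, and $n_{l+1}(c)>1$ exactly when $l=L(c)$ and $c$ is internal, so the right-hand side of~(1) is canonically $\ent/p^{\Delta^\omega_r-r}\ent\oplus\underset{c\ \text{internal}}{\oplus}(\ent/p^{f^\omega_c-r}\ent)^{N_c-1}$ with $N_c:=n_{L(c)+1}(c)$. The generators have the expected orders: $x^\omega_{U_r}=(p^{\epsilon_0-\Delta^\omega_r})_{i\in U_r}$ has order $p^{\Delta^\omega_r-r}$ in $\underset{i\in U_r}{\oplus}\ent/p^{\epsilon_0-r}\ent$ since $r\leq\Delta^\omega_r\leq\epsilon_0$; and for internal $c$ and a child $c_1$, the element $x^\omega_{c_1}$ of Corollary~\ref{set of generator of  G omega} has all nonzero coordinates equal to $p^{\epsilon_0-f^\omega_c}$, hence order $p^{f^\omega_c-r}$ (and is $0$ if $f^\omega_c=r$). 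Fix for each internal $c$ an ordering $c_1^{(1)},\dots,c_1^{(N_c)}$ of its children with $\min c\in c_1^{(N_c)}$, and let $\Phi$ be the map from the displayed group to $\tilde G_\omega(K_0,K_{U_r})$ sending the first factor to $x^\omega_{U_r}$ and the $j$-th generator of the $c$-block to $x^\omega_{c_1^{(j)}}$ for $1\leq j\leq N_c-1$; the order count makes $\Phi$ well defined. By Proposition~\ref{a system of generator of patchable}, $\tilde G_\omega(K_0,K_{U_r})$ is generated by $x^\omega_{U_r}$ together with \emph{all} the $x^\omega_{c_1}$; but $x^\omega_{c_1^{(1)}}+\cdots+x^\omega_{c_1^{(N_c)}}$ equals the function that is $p^{\epsilon_0-f^\omega_c}$ on $c$ and $0$ off $c$, and — using $\Delta^\omega_r\geq f^\omega_{U_r}$ and the monotonicity $f^\omega_c\leq f^\omega_{c'}$ for ancestors $c'$ of $c$ — a short induction on the depth of $c$ shows this lies in the subgroup generated by $x^\omega_{U_r}$ and the generators attached to the ancestors of $c$; hence the discarded $x^\omega_{c_1^{(N_c)}}$ are redundant and $\Phi$ is surjective.

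Injectivity is the real work; I would prove it by induction along the tree. First split off the cyclic summand: for $r>0$, evaluation at $t:=\min U_r$ maps $\tilde G_\omega(K_0,K_{U_r})$ onto the subgroup $p^{\epsilon_0-\Delta^\omega_r}(\ent/p^{\epsilon_0-r}\ent)\simeq\ent/p^{\Delta^\omega_r-r}\ent$ (patchability forces $x_t$ divisible by $p^{\epsilon_0-\Delta^\omega_r}$, and $x^\omega_{U_r}$ realises this value), while $t\notin c_1$ for every child-generator $x^\omega_{c_1}$ in $\img\Phi$; this identifies $\tilde G_\omega(K_0,K_{U_r})$ with $\langle x^\omega_{U_r}\rangle\oplus\tilde G^0_\omega$, $\tilde G^0_\omega$ the kernel, and reduces matters to the following claim (the case $r=0$, where the vanishing at $t$ is already built into the definition, being the same). \emph{Claim.} For every internal class $V$, the elements $x^\omega_{c_1}$ with $c_1$ a child of an internal class $c\subseteq V$ and $\min c\notin c_1$ freely generate a subgroup of $\underset{i\in V}{\oplus}\ent/p^{\epsilon_0-r}\ent$ isomorphic to $\underset{c\subseteq V\ \text{internal}}{\oplus}(\ent/p^{f^\omega_c-r}\ent)^{N_c-1}$. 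For the induction on $|V|$, write the children of $V$ as $V_1,\dots,V_N$ with $\min V\in V_1$; the generators split into $\{x^\omega_{V_a}\}_{a\geq2}$, each supported on the single child $V_a$ and of amount $p^{\epsilon_0-f^\omega_V}$, and, for each $a$ with $|V_a|>1$, the generators for the subtree rooted at $V_a$, which are supported inside $V_a$. In a relation, evaluate at the coordinate $\min V_a$ for a fixed $a\geq2$: generators supported off $V_a$ contribute $0$; a subtree generator of $V_a$ supported at $\min V_a$ would be attached to a node $c$ with $\min V_a\in c\subseteq V_a$, whence $\min c=\min V_a$ and the child of $c$ meeting $\min V_a$ is the discarded one, so there are none; hence only $x^\omega_{V_a}$ contributes at $\min V_a$, and the relation there reads $\lambda_a\,p^{\epsilon_0-f^\omega_V}\equiv0\pmod{p^{\epsilon_0-r}}$, so $\lambda_a=0$ in $\ent/p^{f^\omega_V-r}\ent$. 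With the $\{x^\omega_{V_a}\}$ gone, the relation decomposes over the pairwise disjoint children into subtree relations, each annihilated by the induction hypothesis. This proves the Claim, hence the injectivity of $\Phi$, and part~(1) follows; part~(2) is identical.

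The step I expect to be the main obstacle is exactly this inductive bookkeeping: verifying that "being the discarded child" is inherited down the tree along the chain through $\min V_a$ (which is what makes $\min V_a$ a legitimate pivot and keeps the subtree pieces from interfering), and — on the surjectivity side — tracking the ancestor monotonicity $f^\omega_c\leq f^\omega_{c'}\leq\Delta^\omega_r$ needed to express the class-indicator functions through the chosen generators. All the supporting facts are in place: the generating set is Proposition~\ref{a system of generator of patchable}, the individual elements are Corollaries~\ref{set of generator of  G omega} and~\ref{set of generator of  G}, the monotonicities are built into Definitions~\ref{alg deg of freedom of U_r} and~\ref{patching degree} together with the Remarks following them, and patchability is Definitions~\ref{alg patchable} and~\ref{patchable}.
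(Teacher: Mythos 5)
Your proposal is correct and follows essentially the same route as the paper: generation via Proposition \ref{a system of generator of patchable}, the orders of $x^\omega_{U_r}$ and the $x^\omega_{c_1}$, and the relations $\sum_{c_1\in c/\underset{l+1}{\sim}} x^\omega_{c_1}=p^{f^\omega_{c_0}-f^\omega_{c}}x^\omega_{c}$ (resp.\ $\sum x^\omega_{c}=p^{\Delta^\omega_r-f^\omega_{U_r}}x^\omega_{U_r}$) used to discard one child per internal class and to see the remaining generators are independent. Your tree-indexed pivot argument at $\min V_a$ merely makes explicit the independence claim the paper asserts more tersely, so nothing of substance differs.
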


\begin{proof}

By Proposition \ref{a system of generator of patchable}, the group $\tilde{G}_\omega(K_0,K_{U_r})$ is generated by the $x^\omega_{c}$ for $l\geq L(U_r)$ and $c\in U_r/\underset{l}{\sim}$.

It is clear that the cyclic group $\langle x^\omega_{U_r}\rangle\simeq \ent/ p^{\Delta^\omega_r-r}\ent$.
For $l\geq L(U_r)$, $c\in U_r/\underset{l}{\sim}$, and $c_1\in c/\underset{L(c)+1}{\sim}$,
the group $\langle x^\omega_{c_1}\rangle$
is isomorphic to $\ent/ p^{f^\omega_{c}-r}\ent$.

For $U_r$ we have $$\underset{c\in U_r/\underset{L(U_r)+1}{\sim}}{\sum} x^\omega_{c}=p^{{\Delta^\omega_r}-f^\omega_{U_r}}x^\omega_{U_r}.$$
Let $c_0\in U_r/\underset{l_0}{\sim}$ and $c\in c_0/\underset{L(c_0)+1}{\sim}$.
Set $l=L(c_0)+1$. If $n_{l+1}(c)>1$, then we have the relation
$$\underset{c_1\in c/\underset{l+1}{\sim}}{\sum} x^\omega_{c_1}=p^{f^\omega_{c_0}-f^\omega_{c}}x^\omega_{c},$$
We choose $n_{l+1}(c)$-1 distinct classes in $c/\underset{l+1}{\sim}$. Let $c_i$ for $1\leq i<n_{l+1}(c)$ be these classes. Then $x^\omega_{c_i}$'s
generate a group isomorphic to $(\ent/p^{f^\omega_{c}-r}\ent)^{n_{l+1}(c)-1}$ and this group is disjoint from the group generated by $ x^\omega_{c'}$ for $ L(U_r)\leq h\leq l$ and $c'\in U_r/\underset{h}{\sim}$ and
by $ x^\omega_{c'}$ for $c'\in U_r/\underset{l+1}{\sim}$ for $c'\nsubseteq c$.

Hence $\tilde{G}(K_0,K_{U_r})\simeq \ent/p^{\Delta^\omega_r-r}\ent\underset{l\geq L(U_r)}{\oplus} \underset{c\in U_r/\underset{l}{\sim} }{\oplus}(\ent/p^{f^\omega_{c}-r}\ent)^{n_{l+1}(c)-1}$.

One can prove (2) by a similar argument.
\end{proof}

For $U_0=\cI$, we get the group structure of $\sha^1(k,T_{L/k})$ immediately from the above theorem.
\begin{coro}\label{sha for U_0=I}
Suppose that $U_0=\mathcal{I}$. Then
\begin{enumerate}
  \item $\sha^2_\omega(k,\hat{T}_{L/k})\simeq\underset{l\geq L(\mathcal{I})}{\oplus} \underset{c\in \mathcal{I}/\underset{l}{\sim} }{\oplus}(\ent/p^{f^\omega_{c}}\ent)^{n_{l+1}(c)-1}$.
  \item $\sha^2(k,\hat{T}_{L/k})\simeq\underset{l\geq L(\mathcal{I})}{\oplus} \underset{c\in \mathcal{I}/\underset{l}{\sim} }{\oplus}(\ent/p^{f_{c}}\ent)^{n_{l+1}(c)-1}$.
\end{enumerate}
\end{coro}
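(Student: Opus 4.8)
The plan is to deduce Corollary \ref{sha for U_0=I} directly from Theorem \ref{description of patchable group} together with the structural results already established for the full combinatorial groups. First I would invoke Proposition \ref{G^0 structure}: since $U_0 = \cI$, there are no other nonempty $U_r$ (every $i\in\cI$ has $e_{0,i}=0$), so the direct-sum decompositions collapse to $G_\omega(K_0,K') = D\oplus\tilde G_\omega(K_0,K_\cI)$ and $G(K_0,K') = D\oplus\tilde G(K_0,K_\cI)$. Here one must note the bookkeeping convention: when $U_0=\cI$, Definition \ref{alg patchable} (resp. Definition \ref{patchable}) already imposes $x_1 = 0$ in defining the patchable subgroup, which is exactly the condition cutting $G^0_\omega$ (resp. $G^0$) out of $G_\omega$ (resp. $G$) as a complement to $D$. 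So $\tilde G_\omega(K_0,K_\cI)\simeq G^0_\omega(K_0,K') \simeq G_\omega(K_0,K')/D = \sha_\omega(K_0,K')$, and likewise without the $\omega$.

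Next I would appeal to Theorem \ref{sha_omega T0_primepower} and Theorem \ref{isom between G and sha}, which identify $\sha^2_\omega(k,\hat T_{L/k})\simeq\sha_\omega(K_0,K')$ and $\sha^2(k,\hat T_{L/k})\simeq\sha(K_0,K')$ respectively. Combining these isomorphisms with the previous paragraph gives $\sha^2_\omega(k,\hat T_{L/k})\simeq\tilde G_\omega(K_0,K_\cI)$ and $\sha^2(k,\hat T_{L/k})\simeq\tilde G(K_0,K_\cI)$. Then I would simply substitute $r=0$ into the formulas of Theorem \ref{description of patchable group}. The first summand $\ent/p^{\Delta^\omega_0 - 0}\ent$ (resp.\ $\ent/p^{\Delta_0}\ent$) must be shown to vanish: when $U_0 = \cI$ one has $U_{>0} = U_{<0} = \emptyset$, so Definition \ref{alg patching degree} (resp.\ Definition \ref{patching degree}) places no constraint and sets $\Delta^\omega_0 = \Delta_0 = \epsilon_0$ — wait, this needs care. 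Actually the relevant point is that in this degenerate case the "$x_1=0$" normalization has already been used to split off $D$, so the cyclic factor $\ent/p^{\Delta^\omega_r - r}\ent$ coming from $\langle x^\omega_{U_r}\rangle = \langle(p^{\epsilon_0-\Delta^\omega_0})_i\rangle$ is the diagonal direction $D$ itself and must be discarded. I would make this explicit by tracing through Proposition \ref{a system of generator of patchable}: its proof starts by modifying $x$ by a multiple of $x_{U_r}$ to arrange $x_t = 0$, and this $x_{U_r}$ is precisely a generator of the image of $D$; so in the normalized group $G^0$ (where we have already quotiented by $D$), that factor is absent, leaving exactly $\underset{l\geq L(\cI)}{\oplus}\underset{c\in\cI/\underset{l}{\sim}}{\oplus}(\ent/p^{f^\omega_c}\ent)^{n_{l+1}(c)-1}$, and with $r=0$ we have $f^\omega_c - r = f^\omega_c$ and $f_c - r = f_c$.

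The main obstacle I anticipate is the careful handling of the diagonal subgroup $D$ and the $\Delta^\omega_r - r$ factor: one has to be sure that passing from $\tilde G_\omega(K_0,K_\cI)$ as defined (a subgroup of $\underset{i}{\oplus}\ent/p^{\epsilon_0}\ent$ with the $x_1 = 0$ normalization) to $\sha_\omega(K_0,K') = G_\omega(K_0,K')/D$ does not introduce or lose a cyclic factor, i.e.\ that the normalization $x_1 = 0$ gives an honest complement to $D$ rather than merely a subgroup. Since $D$ is generated by $(1,1,\dots,1)$ and the $1$-st coordinate map $\underset{i}{\oplus}\ent/p^{e_i}\ent\to\ent/p^{e_1}\ent=\ent/p^{\epsilon_0}\ent$ restricts to an isomorphism on $D$ (recall $e_1 = \epsilon_0$), the subgroup $\{x : x_1 = 0\}$ is indeed a complement, and $G^0_\omega = G_\omega\cap\{x_1=0\}$ maps isomorphically onto $G_\omega/D$. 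Everything else is formal substitution; I would state the chain of isomorphisms and cite the four results above, with a sentence or two explaining the disappearance of the $\ent/p^{\Delta^\omega_r-r}\ent$ factor when $r=0$ and $U_0=\cI$.
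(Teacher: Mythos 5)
Your argument is correct and follows essentially the same route as the paper: apply Theorem \ref{description of patchable group} with $r=0$ and $\Delta_0^\omega=\Delta_0=\epsilon_0$, identify the leading cyclic factor $\ent/p^{\epsilon_0}\ent$ (generated by $x_{U_0}=(1,\dots,1)$) with the diagonal subgroup $D$, and pass to the quotient via Theorems \ref{isom between G and sha} and \ref{sha_omega T0_primepower}. Your extra care about whether $\{x:x_1=0\}$ is an honest complement to $D$ and about the bookkeeping of the $\ent/p^{\Delta^\omega_0}\ent$ factor is sound and, if anything, slightly more explicit than the paper's own two-line proof.
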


\begin{proof}
The arguments for (1) and (2) are similar.
We show (2) here.

As $U_0=\cI$, we have $\Delta_0=\epsilon_0$ and $G(K_0,K')=\tilde{G}(K_0,K_{U_0})$.
By Theorem \ref{description of patchable group} the group $G(K_0,K')\simeq \ent/p^{\epsilon_0}\ent \underset{l\geq L(\mathcal{I})}{\oplus} \underset{c\in \mathcal{I}/\underset{l}{\sim} }{\oplus}(\ent/p^{f_{c}}\ent)^{n_{l+1}(c)-1}.$
As the diagonal group $D$ is isomorphic to $\ent/p^{\epsilon_0}\ent$, we have $$\sha^2(k,\hat{T}_{L/k})\simeq\underset{l\geq L(\mathcal{I})}{\oplus} \underset{c\in \mathcal{I}/\underset{l}{\sim} }{\oplus}(\ent/p^{f_{c}}\ent)^{n_{l+1}(c)-1}.$$

\end{proof}

\subsection{The Tate-Shafarevich groups}
For $i\in U_r$ and $l\geq L(U_r)$, set $a^\omega_{c}=(a_j)_{j\in\mathcal{I}}$ to be the embedding of $x^\omega_{c}=(x_j)_{j\in U_r}$ in $G_\omega(K_0,K')$ as follows:
\begin{equation}
a_j=\left\{
\begin{array}{ll}
x_j, & \hbox{for all $j\in U_r,$}\\
0, & \hbox{otherwise.}
\end{array}
\right .
\end{equation}

We define $a_{c}=(a_j)_{j\in\mathcal{I}}$ to be the embedding of $x_{c}=(x_j)_{j\in U_r}$ in $G(K_0,K')$ in the same way.

\begin{prop}\label{a system of generator of G}
We have the following:
\begin{enumerate}
  \item The group $G_\omega(K_0,K')$ is generated by the diagonal group $D$ and the $a^\omega_{c}$'s defined as above.
  \item The group $G(K_0,K')$ is generated by the diagonal group $D$ and the $a_{c}$'s defined as above.
\end{enumerate}
\end{prop}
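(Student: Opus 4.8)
The plan is to reduce Proposition \ref{a system of generator of G} to the already established structural results of the previous sections. By Proposition \ref{G^0 structure} we have $G_\omega(K_0,K')\simeq D\oplus\underset{r}{\oplus}\tilde{G}_\omega(K_0,K_{U_r})$, where $r$ runs over the nonnegative integers with $U_r$ nonempty, and likewise $G(K_0,K')\simeq D\oplus\underset{r}{\oplus}\tilde{G}(K_0,K_{U_r})$. Hence it suffices to show that each direct summand $\tilde{G}_\omega(K_0,K_{U_r})$ (resp. $\tilde{G}(K_0,K_{U_r})$) is generated, as a subgroup of $G_\omega(K_0,K')$ (resp. $G(K_0,K')$), by the embedded elements $a^\omega_{c}$ (resp. $a_{c}$) together with the embedded class of $x^\omega_{U_r}$ (resp. $x_{U_r}$); the latter is accounted for by the diagonal group $D$ after the identification $G_\omega(K_0,K')=D\oplus G^0_\omega(K_0,K')$.

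First I would recall that by Proposition \ref{a system of generator of patchable}, the group $\tilde{G}_\omega(K_0,K_{U_r})$ is generated by the elements $x^\omega_{c}$ for $l\geq L(U_r)$ and $c\in U_r/\underset{l}{\sim}$ (and similarly for the non-$\omega$ version). Next I would invoke the embedding: by construction, the map $x\mapsto a_x$ of equation (\ref{definition of ax}) sends $x^\omega_{c}$ to $a^\omega_{c}$, and by Proposition \ref{alg_patch x} (resp. Proposition \ref{patch}) this embedding carries $\tilde{G}_\omega(K_0,K_{U_r})$ into $G^0_\omega(K_0,K')$ (resp. $\tilde{G}(K_0,K_{U_r})$ into $G^0(K_0,K')$), and it is a group homomorphism that is a section of the projection $\varpi_r$. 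Therefore a generating set of $\tilde{G}_\omega(K_0,K_{U_r})$ maps to a generating set of its image inside $G^0_\omega(K_0,K')$. Combining this over all $r$ with the direct-sum decomposition of Proposition \ref{G^0 structure} shows that $G^0_\omega(K_0,K')$ is generated by all the $a^\omega_{c}$'s, and hence $G_\omega(K_0,K')$ is generated by $D$ together with the $a^\omega_{c}$'s. The same argument with $G$ in place of $G_\omega$ gives assertion (2).

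One bookkeeping point to address carefully is that $x^\omega_{U_r}$ itself is not of the form $x^\omega_{c}$ for a proper class $c$, but rather plays the role of the "top" cyclic generator $\ent/p^{\Delta^\omega_r-r}\ent$ in Theorem \ref{description of patchable group}; its embedding $a_{x^\omega_{U_r}}$ is a (non-diagonal when $\Delta^\omega_r<\epsilon_0$) element, so strictly speaking the statement should be read as: $G_\omega(K_0,K')$ is generated by $D$, the $a^\omega_{c}$'s, and the embedded $x^\omega_{U_r}$'s. However, when $r=0$ and $U_0=\cI$ one has $\Delta^\omega_0=\epsilon_0$ so $a_{x^\omega_{U_0}}\in D$; in general the element $x^\omega_{U_r}$ is realized by $a^\omega_{c}$ for $c=U_r$ the trivial class at level $l=L(U_r)$ only after passing to $c_1\in c/\underset{L(c)+1}{\sim}$, so I would simply include the $x_{U_r}$'s explicitly in the generating set or note that in the statement "$a^\omega_c$" ranges over all $l\ge 0$ including the degenerate class. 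The main obstacle is thus not a deep argument but making sure the indexing conventions for the classes $c$ (and the distinguished element $x^\omega_{U_r}$) are stated consistently with Proposition \ref{a system of generator of patchable} and Theorem \ref{description of patchable group}, so that "generated by $D$ and the $a^\omega_c$'s" literally captures all the generators listed in the direct-sum decomposition.
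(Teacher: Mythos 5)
Your proposal is correct and follows essentially the same route as the paper's own (very short) proof: reduce modulo $D$ to $G^0_\omega(K_0,K')$, apply Proposition \ref{G^0 structure} to split it as $\underset{r}{\oplus}\tilde{G}_\omega(K_0,K_{U_r})$, and then quote Proposition \ref{a system of generator of patchable} for generators of each summand, transported into $G^0_\omega(K_0,K')$ by the section $x\mapsto a_x$ of $\varpi_r$. Your bookkeeping remark about the distinguished generators $x^\omega_{U_r}$ is a fair reading of the paper's slightly loose indexing (they do need to be included, as the relation $\underset{r}{\sum} p^{\Delta^\omega_r-\Delta^\omega_0}a^\omega_{U_r}=p^{\epsilon_0-\Delta^\omega_0}a_{\mathcal{I}}$ in Theorem \ref{group structure of G omega} confirms), but it does not change the substance of the argument.
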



\begin{proof}
Let $a=(a_i)_{i\in \mathcal{I}}\in G_\omega(K_0,K')$.
After modifying by a diagonal element, we can assume that $a_1=0$.
By Proposition \ref{G^0 structure} we have $a\in\underset{r}{\oplus}\tilde{G}_\omega(K_0,K_{U_r})$.
Then $a$ is generated by $D$ and $x^\omega_{c}$'s by Proposition \ref{a system of generator of patchable}.



A similar argument proves (2).

\end{proof}



\begin{theo}\label{group structure of G omega}
Keep the notations as above. Then we have
 $$G_\omega(K_0,K')\simeq\ent/p^{\epsilon_0}\ent \underset{r\in\mathcal{R}\setminus\{0\}}{\oplus}\ent/p^{\Delta^\omega_r-r}\ent\underset{r\in\mathcal{R}}{\oplus} \underset{l\geq L(U_r)}{\oplus} \underset{c\in U_r/\underset{l}{\sim} }{\oplus}(\ent/p^{f^\omega_{c}-r}\ent)^{n_{l+1}(c)-1};$$
 and
 $$G(K_0,K')\simeq\ent/p^{\epsilon_0}\ent \underset{r\in\mathcal{R}\setminus\{0\}}{\oplus}\ent/p^{\Delta_r-r}\ent \underset{r\in\mathcal{R}}{\oplus} \underset{l\geq L(U_r)}{\oplus} \underset{c\in U_r/\underset{l}{\sim} }{\oplus}(\ent/p^{f_{c}-r}\ent)^{n_{l+1}(c)-1}.$$
As a consequence, we have
 $$\sha^2_\omega(k,\hat{T}_{L/K})\simeq\underset{r\in\mathcal{R}\setminus\{0\}}{\oplus}\ent/p^{\Delta^\omega_r-r}\ent \underset{r\in\mathcal{R}}{\oplus} \underset{l\geq L(U_r)}{\oplus} \underset{c\in U_r/\underset{l}{\sim} }{\oplus}(\ent/p^{f^\omega_{c}-r}\ent)^{n_{l+1}(c)-1};$$
 and

$$\sha^1(k,T_{L/K})\simeq\underset{r\in\mathcal{R}\backslash\{0\}}{\oplus}\ent/p^{\Delta_r-r}\ent \underset{r\in\mathcal{R}}{\oplus} \underset{l\geq L(U_r)}{\oplus} \underset{c\in U_r/\underset{l}{\sim} }{\oplus}(\ent/p^{f_{c}-r}\ent)^{n_{l+1}(c)-1}.$$
\end{theo}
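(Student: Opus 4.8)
The plan is to assemble the group structures from the building blocks already established, then pass to the quotient by $D$ and use the isomorphisms of Theorem \ref{sha_omega T0_primepower} and Theorem \ref{isom between G and sha}. First I would invoke Proposition \ref{G^0 structure}, which gives $G_\omega(K_0,K')\simeq D\oplus\bigoplus_r \tilde{G}_\omega(K_0,K_{U_r})$, where $r$ ranges over the set $\mathcal{R}$ of nonnegative integers with $U_r$ nonempty; the analogous decomposition for $G(K_0,K')$ comes from the same proposition. Then I would substitute the explicit description of each patchable subgroup from Theorem \ref{description of patchable group}: for $U_r$ nonempty,
\[
\tilde{G}_\omega(K_0,K_{U_r})\simeq \ent/p^{\Delta^\omega_r-r}\ent\oplus \underset{l\geq L(U_r)}{\oplus}\underset{c\in U_r/\underset{l}{\sim}}{\oplus}(\ent/p^{f^\omega_{c}-r}\ent)^{n_{l+1}(c)-1},
\]
and likewise with $\Delta_r$, $f_c$ in place of $\Delta^\omega_r$, $f^\omega_c$ for $\tilde{G}(K_0,K_{U_r})$. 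One point to address carefully is the index $r=0$: since $\Delta^\omega_0=\epsilon_0$ (and $\Delta_0=\epsilon_0$) whenever we are in the degenerate-free situation $U_0\neq\cI$ — and when $U_0=\cI$ the whole group is just $\tilde{G}_\omega(K_0,K_{U_0})$ by Proposition \ref{G^0 structure} — the summand $\ent/p^{\Delta^\omega_0-0}\ent=\ent/p^{\epsilon_0}\ent$ coming from $r=0$ is exactly what combines with the diagonal group $D$. More precisely, I would note $D\simeq \ent/p^{\epsilon_0}\ent$ and that under the decomposition $G_\omega=D\oplus\bigoplus_r\tilde{G}_\omega(K_0,K_{U_r})$ the $\ent/p^{\epsilon_0}\ent$ factor in the statement is $D$ itself, while the $r=0$ patchable subgroup contributes only its "freedom" part $\bigoplus_{l\geq L(U_0)}\bigoplus_{c}(\ent/p^{f^\omega_c}\ent)^{n_{l+1}(c)-1}$ — this is consistent because for $r=0$ the cyclic summand $\ent/p^{\Delta^\omega_0-0}\ent$ of Theorem \ref{description of patchable group} is generated by $x^\omega_{U_0}$, which lies in the span of $D$ together with the $x^\omega_c$'s (as noted in the relations in the proof of Theorem \ref{description of patchable group}). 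This is why the first displayed formula sums $\ent/p^{\Delta^\omega_r-r}\ent$ only over $r\in\mathcal{R}\setminus\{0\}$.

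Having reconciled the $r=0$ bookkeeping, the first two displayed isomorphisms follow by direct substitution. For the $\sha$ statements I would then quotient by $D$: by definition $\sha_\omega(K_0,K')=G_\omega(K_0,K')/D$ and $\sha(K_0,K')=G(K_0,K')/D$, and since $D$ is precisely the $\ent/p^{\epsilon_0}\ent$ direct summand exhibited above, the quotient simply deletes that factor, yielding
\[
\sha_\omega(K_0,K')\simeq\underset{r\in\mathcal{R}\setminus\{0\}}{\oplus}\ent/p^{\Delta^\omega_r-r}\ent\underset{r\in\mathcal{R}}{\oplus}\underset{l\geq L(U_r)}{\oplus}\underset{c\in U_r/\underset{l}{\sim}}{\oplus}(\ent/p^{f^\omega_{c}-r}\ent)^{n_{l+1}(c)-1},
\]
and similarly for $\sha(K_0,K')$. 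Finally I would apply Theorem \ref{sha_omega T0_primepower} to identify $\sha^2_\omega(k,\hat{T}_{L/k})\simeq\sha_\omega(K_0,K')$ and Theorem \ref{isom between G and sha} to identify $\sha^2(k,\hat{T}_{L/k})\simeq\sha(K_0,K')$; one also uses $\sha^1(k,T_{L/k})$ is dual (Poitou–Tate) to $\sha^2(k,\hat{T}_{L/k})$, hence abstractly isomorphic as finite abelian groups, to get the stated description of $\sha^1(k,T_{L/k})$.

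The main obstacle I anticipate is not any single hard inequality — those have all been absorbed into Proposition \ref{G^0 structure}, Theorem \ref{description of patchable group}, and the bounds of Propositions \ref{bound on Delta omega} and \ref{bound on Delta} — but rather making the direct-sum decomposition genuinely internal, i.e. verifying that the listed generators ($D$, the $x^\omega_{U_r}$ for $r\neq 0$, and the $x^\omega_{c_1}$ for the chosen representatives $c_1$ of the $(l+1)$-classes inside each $l$-class $c$) are independent and that the relations among them are exactly the stated orders, with no hidden extra relations linking different values of $r$. The projection maps $\varpi_r$ from Section 2.3 and the splitting in Proposition \ref{G^0 structure} handle the separation across different $r$; within a fixed $r$ the independence is the content of the last paragraph of the proof of Theorem \ref{description of patchable group}, where the relation $\sum_{c_1\in c/\underset{l+1}{\sim}}x^\omega_{c_1}=p^{f^\omega_{c_0}-f^\omega_c}x^\omega_c$ pins down precisely which $n_{l+1}(c)-1$ of the $n_{l+1}(c)$ classes to retain. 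So the proof is essentially an assembly argument, and the care needed is in citing the right structural result at each juncture rather than in new estimates.
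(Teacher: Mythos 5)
Your proposal follows essentially the same route as the paper: generation by $D$ and the patchable subgroups (Proposition \ref{a system of generator of G} / Proposition \ref{G^0 structure}), substitution of Theorem \ref{description of patchable group}, absorption of the redundant $r=0$ cyclic factor via the relation expressing $x^\omega_{U_0}$ in terms of $a_{\mathcal{I}}$ and the $a^\omega_{U_r}$ for $r\neq 0$, and finally the quotient by $D$ together with Theorems \ref{isom between G and sha} and \ref{sha_omega T0_primepower}. The only inaccuracy is your parenthetical claim that $\Delta^\omega_0=\epsilon_0$ whenever $U_0\neq\cI$: by Proposition \ref{bound on Delta omega} (1) one only gets $\Delta^\omega_0=\Delta^\omega_{r'}$, which can be strictly less than $\epsilon_0$; this does not damage your argument, since your ``more precisely'' reformulation --- which is exactly the paper's explicit relation $\sum_{r} p^{\Delta^\omega_r-\Delta^\omega_0}a^\omega_{U_r}=p^{\epsilon_0-\Delta^\omega_0}a_{\mathcal{I}}$ --- is what actually carries the absorption of the $r=0$ summand.
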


\begin{proof}
By Proposition \ref{a system of generator of G}, the group $G_\omega(K_0,K')$ is generated by the diagonal group $D$ and the group $\underset{r\in\mathcal{R}}{\oplus}\tilde{G}_\omega(K_0,K_{U_r})$.
If $U_0=\mathcal{I}$, then it is  Theorem \ref{description of patchable group}.

Suppose that $U_0\neq\mathcal{I}$.
Set $a_{\mathcal{I}}=(1,...,1)$, which is a generator of $D$.
Then we have the relation
$$\underset{r\in\mathcal{R} }{\sum} p^{\Delta^\omega_r-\Delta^\omega_0}a^\omega_{U_r}=p^{\epsilon_0-\Delta^\omega_0}a_{\mathcal{I}}.$$
Note that by Proposition \ref{bound on Delta omega} (1) and (2), we have $\Delta^\omega_0>0$ and $\Delta^\omega_r-\Delta^\omega_0\geq 0$.
Hence $p^{\epsilon_0-\Delta^\omega_0}a_{\mathcal{I}}$ and
$a^\omega_{U_0}$ is nonzero.
It is clear that the element $p^{\epsilon_0-\Delta^\omega_0}a_{\mathcal{I}}$ generates the intersection
$D\cap \underset{r\in\mathcal{R}}{\oplus}\tilde{G}_\omega(K_0,K_{U_r})$.
Hence $$G_\omega(K_0,K')\simeq D\underset{l\geq L(U_0)}{\oplus} \underset{c\in U_0/\underset{l}{\sim} }{\oplus}(\ent/p^{f^\omega_{c}}\ent)^{n_{l+1}(c)-1}\underset{r\in\mathcal{R}\backslash\{0\}}{\oplus}\tilde{G}_\omega(K_0,K_{U_r}).$$

Applying Proposition \ref{bound on Delta} (1) and (2) instead of Proposition \ref{bound on Delta omega}, one proves in a similar way the statement of $G(K_0,K')$.
\end{proof}

\begin{remark}
Let $\mathcal{K}$ be a minimal Galois extension of $k$ which splits $T_{L/k}$
and denote its Galois group by $\mathcal{G}$.
An alternative way to calculate $\sha^2_\omega(\mathcal{G},\hat{T}_{L/k})$ is to
express the degree of freedom and patching degree in terms of the group structure of $\mathcal{G}$.
Then one can use the method in \cite{BP19} to get $\sha^2_\omega(\mathcal{G},\hat{T}_{L/k})$ from
$\sha^2(l,M)$ for some finite extension $l$ and some $\Gal(k_s/k)$-module $M$.
\end{remark}

\section{Examples}
In this section, we give some examples where more explicit descriptions of the groups $\sha^2(k,\hat{T}_{L/k})$ and $\sha^2_\omega(k,\hat{T}_{L/k})$ are obtained.
We first note the following case.
\begin{prop}\label{criterion for sha trivial}
If $\underset{i\in U_0}{\cap} K_0K_i=K_0$, then $\sha^2(k,\hat{T}_{L/k})=\sha_\omega^2(k,\hat{T}_{L/k})=0$.
\end{prop}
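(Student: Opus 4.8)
The plan is to deduce this from the structural description of $G(K_0,K')$ and $G_\omega(K_0,K')$ obtained in Theorem \ref{group structure of G omega}. Recall that by that theorem one has
\[
\sha^2(k,\hat{T}_{L/k})\simeq\underset{r\in\mathcal{R}\setminus\{0\}}{\oplus}\ent/p^{\Delta_r-r}\ent \underset{r\in\mathcal{R}}{\oplus} \underset{l\geq L(U_r)}{\oplus} \underset{c\in U_r/\underset{l}{\sim} }{\oplus}(\ent/p^{f_{c}-r}\ent)^{n_{l+1}(c)-1},
\]
and similarly for $\sha^2_\omega$ with $\Delta_r$, $f_c$ replaced by $\Delta^\omega_r$, $f^\omega_c$. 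So it suffices to show that under the hypothesis $\underset{i\in U_0}{\cap} K_0K_i=K_0$, every summand vanishes, i.e. $\Delta^\omega_r=r$ for all $r\in\mathcal{R}\setminus\{0\}$, and $f^\omega_c=r$ for every class $c\in U_r/\underset{l}{\sim}$ with $l\geq L(U_r)$ (and likewise without the $\omega$'s, which then follow since $\Delta_r\leq\Delta^\omega_r$ and $f_c\leq f^\omega_c$).

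First I would reduce to the statement about $U_0$. The hypothesis says $\underset{i\in U_0}{\cap}K_0K_i=K_0$. I claim this forces $\epsilon_0$ to be minimal in a strong sense: since we have renamed things so that $[K_0:k]$ is minimal and $U_0=\{i\in\cI\mid K_0\cap K_i=k\}$ is nonempty (as $\cap_{i\in\cI'}K_i=k$), pick any $i\in U_0$; then $K_0\cap K_i=k$ and $K_0K_i$ is bicyclic of Galois group $\ent/p^{\epsilon_0}\ent\times\ent/p^{\epsilon_i}\ent$ with $\epsilon_i\geq\epsilon_0$. The condition $\cap_{i\in U_0}K_0K_i=K_0$ means no nontrivial subfield of $K_0$ of the form $K_0(f)$, $f>0$, is "absorbed" jointly; more precisely, for each $f>0$ there is some $i\in U_0$ with $K_0(f)\not\subseteq$ (the part of $K_0K_i$ other than $K_i$), but since $K_0(f)\subseteq K_0\subseteq K_0K_i$ always, the content is about the degree of freedom. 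I would unwind this directly at the level of the defining conditions: for $c\in U_0/\underset{l}{\sim}$, the algebraic degree of freedom $f^\omega_c$ is the largest $f$ with $K_0(f)\subseteq M_c(f+L(c))$ and $M_c(f+L(c))$ a subfield of a bicyclic extension. Since $M_c(f+L(c))\subseteq \cap_{i\in c}K_0K_i\subseteq\cap_{i\in U_0}K_0K_i$ once $f\le\epsilon_0$ and using Proposition \ref{expression of M as K_0K_i} to write $M_c(f+L(c))=K_0(f)K_i(f+L(c))$, the containment $K_0(f)\subseteq M_c(f+L(c))$ combined with the hypothesis $\cap_{i\in U_0}K_0K_i=K_0$ should force $f=0$. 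Then I would handle $\Delta^\omega_r$: by Proposition \ref{bound on Delta omega}(1), $\Delta^\omega_0=\Delta^\omega_{r'}$ for the smallest positive $r'$ with $U_{r'}$ nonempty (if any), and condition (2) in Definition \ref{alg patching degree} for $\Delta^\omega_r$, $r>0$, reads $M_{U_r}(d)\subseteq\cap_{i\in U_{<r}}K_0(d)K_i(d)$; combined with Lemma \ref{coro from def of Delta omega} and the hypothesis applied at level $U_0$, this should pin $\Delta^\omega_r=r$.

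The cleanest route, which I would try first, is: show the hypothesis is equivalent to $f^\omega_{U_0}=0$ (the algebraic degree of freedom of the top class $U_0$ being zero). Indeed by the discussion following Definition \ref{alg patchable} and Proposition \ref{bicyclic field}, $f^\omega_{U_0}$ is the maximal order of a class function on $U_0/\underset{l+1}{\sim}$ lying in $G_\omega(K_0,K')$, and $K_0(f^\omega_{U_0})\subseteq M_{U_0}(f^\omega_{U_0}+L(U_0))\subseteq\cap_{i\in U_0}K_0K_i$; conversely any element of $\cap_{i\in U_0}K_0K_i$ strictly larger than $K_0$ would produce, via Lemma \ref{subfield of bicyclic}, a subfield $K_0(f)$ with $f>0$ inside $M_{U_0}(f+L(U_0))$. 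So $\cap_{i\in U_0}K_0K_i=K_0$ iff $f^\omega_{U_0}=0$. Then I would use monotonicity: $f^\omega_c\leq f^\omega_{U_0}=0$ for every refinement $c$ (by Definition \ref{alg deg of freedom of U_r} and its recursive successors, since each $f^\omega_c\le f^\omega_{c_0}$ along the chain down from $U_0$ — but one must first check that for $r>0$ the relevant $f^\omega_c$ are also controlled, using Proposition \ref{bound on Delta omega}(3) giving $\Delta^\omega_r-r\geq\Delta^\omega_{r'}-r'$ and eventually $\Delta^\omega_0=\Delta^\omega_r$ when traced back, so $\Delta^\omega_r-r\le\Delta^\omega_0-0=\Delta^\omega_0$, and $f^\omega_{U_0}=0$ forces $\Delta^\omega_0=0$, hence $\Delta^\omega_r=r$ for all $r$ and then $f^\omega_c\le\Delta^\omega_r=r$ trivially bounds all freedoms in $U_r$ to give zero summands). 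Finally, since $\sha^2\subseteq\sha^2_\omega$, the vanishing of $\sha^2_\omega$ gives the vanishing of $\sha^2$, so I would prove the $\omega$-statement and get the other for free.

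The main obstacle I anticipate is the bookkeeping across the different $U_r$'s for $r>0$: showing that the hypothesis at $r=0$ propagates to make $\Delta^\omega_r=r$ and all $f^\omega_c$ (for $c\subseteq U_r$) equal to $r$. The $r=0$ part is essentially a restatement of the hypothesis; the work is the propagation, which I expect to run through Proposition \ref{bound on Delta omega} — specifically part (1), $r=0\Rightarrow\Delta^\omega_0=\Delta^\omega_{r'}$, iterated up the chain of nonempty $U_r$'s, together with part (3) to control $\Delta^\omega_r-r$. Once $\Delta^\omega_r=r$ is known for all $r\in\mathcal{R}$, every summand $\ent/p^{\Delta^\omega_r-r}\ent$ is trivial and, since $f^\omega_c\leq\Delta^\omega_r=r$ forces $f^\omega_c-r\le 0$, i.e. $f^\omega_c=r$, every factor $(\ent/p^{f^\omega_c-r}\ent)^{n_{l+1}(c)-1}$ is trivial as well; Theorem \ref{group structure of G omega} then yields $\sha^2_\omega(k,\hat{T}_{L/k})=0$, whence $\sha^2(k,\hat{T}_{L/k})=0$.
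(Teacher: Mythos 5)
Your overall strategy is the same as the paper's: reduce via Theorem \ref{group structure of G omega} to showing that every summand vanishes, prove $f^\omega_{U_0}=0$ from the hypothesis (your argument here --- $K_0(f^\omega_{U_0})\subseteq M_{U_0}(f^\omega_{U_0}+L(U_0))\subseteq\underset{i\in U_0}{\cap}K_0K_i$ via Proposition \ref{expression of M as K_0K_i} --- is exactly the paper's), and then handle the $U_r$ with $r>0$. The gap is in the propagation step you commit to at the end. You claim that $f^\omega_{U_0}=0$ forces $\Delta^\omega_0=0$ and then feed this into Proposition \ref{bound on Delta omega}(3) to get $\Delta^\omega_r-r\leq\Delta^\omega_0=0$. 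But $\Delta^\omega_0$ is essentially never $0$: if $U_0=\cI$ it equals $\epsilon_0$ by definition, and if $U_0\neq\cI$ then for every $i\in U_{>0}$ and every $d\leq r'$ (where $r'$ is the smallest positive index with $U_{r'}$ nonempty) one has $K_i(d)=K_0(d)$, so $M_{U_{>0}}(d)=K_0(d)\subseteq\underset{i\in U_0}{\cap}K_0(d)K_i(d)$ and hence $\Delta^\omega_0\geq r'>0$; indeed Proposition \ref{bound on Delta omega}(1) gives $\Delta^\omega_0=\Delta^\omega_{r'}$. There is no implication from $f^\omega_{U_0}=0$ to $\Delta^\omega_0=0$: the definitions only bound $f^\omega_{U_0}$ \emph{above} by $\Delta^\omega_0$. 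As written, your chain yields only $\Delta^\omega_r\leq r+\Delta^\omega_0$, which does not kill the summands $\ent/p^{\Delta^\omega_r-r}\ent$.

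The repair is the direct argument you gesture at in your second paragraph, and it is what the paper does: for $r>0$ the set $U_{<r}\supseteq U_0$ is nonempty, so condition (2) of Definition \ref{alg patching degree} forces $M_{U_r}(d)\subseteq\underset{i\in U_0}{\cap}K_0(d)K_i(d)\subseteq\underset{i\in U_0}{\cap}K_0K_i=K_0$; since $K_i(d)\cap K_0=K_i(\min\{d,r\})$ for $i\in U_r$, this forces $d\leq r$, whence $\Delta^\omega_r=r$ and then $r\leq f^\omega_{c}\leq\Delta^\omega_r=r$ for every class $c$ inside $U_r$. (Alternatively, your monotonicity chain through Proposition \ref{bound on Delta omega}(3) does work if you anchor it at $r'$ rather than at $0$, after verifying $\Delta^\omega_{r'}=r'$ directly.) For the $r=0$ summands, note that $\Delta^\omega_0$ does not appear in the formula of Theorem \ref{group structure of G omega}, and the factors $(\ent/p^{f^\omega_{c}}\ent)^{n_{l+1}(c)-1}$ vanish because $f^\omega_{c}\leq f^\omega_{U_0}=0$ by the recursive definition --- which you do use correctly. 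With that one step replaced, your proof coincides with the paper's.
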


\begin{proof}
It is enough to show that $\sha_\omega^2(k,\hat{T}_{L/k})=0$.
Let $l=L(U_0)$.
If $f^\omega_{U_0}\neq 0$,
then  $K_0({f^\omega_{U_0}})M_{U_0}(f^\omega_{U_0}+l)$ is bicyclic and by Proposition \ref{expression of M as K_0K_i} it is contained in
$\underset{i\in U_0}{\cap} K_0K_i$,
which is a contradiction. Therefore $f^\omega_{U_0}=0$.
If $U_0=\mathcal{I}$, then $\sha^2_\omega(k,\hat{T}_{L/k})=0$ by Corollary \ref{sha for U_0=I}.

Suppose that $U_0\neq \mathcal{I}$.
Choose $r>0$ such that $U_r$ is nonempty.
Since $\underset{i\in U_0}{\cap} K_0K_i=K_0$, we have $\Delta^\omega_r=r$.
Hence $f^\omega_c=r$ for all $c\in U_r/\underset{l}{\sim}$.
By Theorem \ref{group structure of G omega}  $\sha^2_\omega(k,\hat{T}_{L/k})=0$.
\end{proof}

\begin{example}Let $k=\rat$ and $\zeta_n$ be a primitive $n$-th root of unity.
Let $p_0$,...,$p_m$ be distinct odd primes and $n_i$ be positive integers.
Set $K_i=\rat(\zeta_{{p_i}^{n_i}})$ for $0\leq i\leq m$.
Then $K_i$ are cyclic extensions. Since $\underset{i\in\cI}\cap K_0K_i=K_0$, by Proposition \ref{criterion for sha trivial} the group $\sha^2_\omega(k,\hat{T}_{L/k})=0$.
\end{example}

\begin{prop}\label{linearly disjoint}
Suppose that $K_i$ are linearly disjoint extensions of $k$ for all $i\in \cI'$.
Let $f$ be the maximum integer such that $M_{\cI'}(f)$ is a subfield of a bicyclic extension;
and $f'$ be the maximum integer such that $M_{\cI'}(f')$ is bicyclic and locally cyclic.
Then $\sha^2_\omega(k,\hat{T}_{L/k})\simeq(\ent/p^f\ent)^{m-1}$ and $\sha^2(k,\hat{T}_{L/k})\simeq(\ent/p^{f'}\ent)^{m-1}$.
\end{prop}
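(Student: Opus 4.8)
The plan is to reduce at once to the situation $U_0=\cI$ treated in Corollary \ref{sha for U_0=I}, and then to match the invariants occurring there with the integers $f,f'$ of the statement. Since the $K_i$ ($i\in\cI'$) are linearly disjoint, $K_i\cap K_j=k$, that is $e_{i,j}=0$, for all $i\neq j$ in $\cI'$; in particular $e_{0,i}=0$ for every $i\in\cI$, so $U_0=\cI$ and $e_i=\epsilon_0$ for all $i$. Corollary \ref{sha for U_0=I} then yields
\[
\sha^2_\omega(k,\hat{T}_{L/k})\simeq\underset{l\geq L(\cI)}{\bigoplus}\ \underset{c\in\cI/\underset{l}{\sim}}{\bigoplus}\big(\ent/p^{f^\omega_{c}}\ent\big)^{n_{l+1}(c)-1},\qquad
\sha^2(k,\hat{T}_{L/k})\simeq\underset{l\geq L(\cI)}{\bigoplus}\ \underset{c\in\cI/\underset{l}{\sim}}{\bigoplus}\big(\ent/p^{f_{c}}\ent\big)^{n_{l+1}(c)-1}.
\]
Because $e_{i,j}=0$ for $i\neq j$, the relation $\underset{l}{\sim}$ identifies all of $\cI$ when $l=0$ and is trivial (singletons) for $l\geq1$; hence $n_1(\cI)=m$, $L(\cI)=0$, and for $l\geq1$ every class $c$ is a singleton with $n_{l+1}(c)=1$, so those summands vanish. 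Only the $l=0$ term survives, with $c=\cI=U_0$ and $n_1(\cI)-1=m-1$, so $\sha^2_\omega(k,\hat{T}_{L/k})\simeq(\ent/p^{f^\omega_{U_0}}\ent)^{m-1}$ and $\sha^2(k,\hat{T}_{L/k})\simeq(\ent/p^{f_{U_0}}\ent)^{m-1}$.

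It therefore suffices to prove $f^\omega_{U_0}=f$ and $f_{U_0}=f'$. Here $L(U_0)=0$ and $\Delta^\omega_0=\epsilon_0$, so $f^\omega_{U_0}$ is the largest $g\leq\epsilon_0$ with $M_{U_0}(g)$ a subfield of a bicyclic extension and $K_0(g)\subseteq M_{U_0}(g)$. For $0\leq g\leq f^\omega_{U_0}$, Proposition \ref{expression of M as K_0K_i} gives $M_{U_0}(g)=K_0(g)K_i(g)$ for any $i\in\cI$; since $K_0\cap K_i=k$, Lemma \ref{bicyclic galois group} shows this field is bicyclic for $g\geq1$, and $M_{\cI'}(g)=K_0(g)M_{U_0}(g)=M_{U_0}(g)$. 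Taking $g=f^\omega_{U_0}$ exhibits $M_{\cI'}(f^\omega_{U_0})$ inside a bicyclic extension, whence $f^\omega_{U_0}\leq f$.

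The opposite inequality $f\leq f^\omega_{U_0}$ is the crux. The idea: assume $f\geq1$ (the case $f=0$ is trivial) and that $M_{\cI'}(f)$ lies in a bicyclic extension, so $G:=\Gal(M_{\cI'}(f)/k)$ is $2$-generated, $G\cong\ent/p^{a}\ent\times\ent/p^{b}\ent$ with $a\geq b$. Since $m\geq2$, $K_1(f)$ and $K_2(f)$ are linearly disjoint, so $\Gal(K_1(f)K_2(f)/k)\cong(\ent/p^{f}\ent)^{2}$ is a quotient of $G$, which forces $b\geq f$; hence the $p^f$-torsion $\hat{G}[p^f]$ has order $p^{2f}$. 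Dually, each subfield $K_i(f)$ of $M_{\cI'}(f)$ corresponds to a cyclic subgroup $C_i\leq\hat{G}$ of order $p^f$, and the subgroup of $\hat{G}$ attached to $K_1(f)K_2(f)$ is $C_1+C_2$, which has order $p^{2f}$ and is contained in $\hat{G}[p^f]$, hence equals $\hat{G}[p^f]$; therefore $C_0\subseteq C_1+C_2$, i.e.\ $K_0(f)\subseteq K_1(f)K_2(f)\subseteq M_{U_0}(f)$. Together with $M_{U_0}(f)\subseteq M_{\cI'}(f)$ this shows $f$ meets the two conditions in Definition \ref{alg deg of freedom of U_r}, so $f\leq f^\omega_{U_0}$ and thus $f^\omega_{U_0}=f$. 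This step is exactly where linear disjointness and the hypothesis $m\geq2$ are essential, via the identification of $C_1+C_2$ with the full $p^f$-torsion of $\hat{G}$.

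For the remaining identity, note that for $g\leq f^\omega_{U_0}=f$ we have $M_{\cI'}(g)=M_{U_0}(g)$, which is bicyclic for $g\geq1$; hence $M_{U_0}(g)$ is locally cyclic if and only if $M_{\cI'}(g)$ is bicyclic and locally cyclic. Since $M_{\cI'}(g)$ being bicyclic forces $g\leq f$, both $f'$ and $f_{U_0}$ are $\leq f$, and the preceding equivalence gives $f_{U_0}=f'$. Combining this with the first paragraph completes the proof.
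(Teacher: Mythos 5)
Your proof is correct and follows essentially the same route as the paper: reduce via linear disjointness to the case $U_0=\cI$, apply Corollary \ref{sha for U_0=I}, and identify $f^\omega_{U_0}=f$ and $f_{U_0}=f'$. The only difference is that the paper asserts this identification holds ``by definition,'' whereas you supply the missing justification (the torsion-subgroup argument showing $K_0(f)\subseteq K_1(f)K_2(f)$, so that $M_{\cI'}(f)=M_{U_0}(f)$ and the two conditions in Definition \ref{alg deg of freedom of U_r} are met); that detail is accurate and genuinely needed, since $f$ is defined via the compositum including $K_0$ while $f^\omega_{U_0}$ is not.
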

\begin{proof}
Since $K_i$ are disjoint extensions for all $i\in \cI'$, we have $U_0=\cI$, $L(U_0)=0$
and $n_{1}(U_0)=m$.
Then by definition we have $f^\omega_{U_0}=f$ and $f_{U_0}=f'$.
The proposition follows from Corollary \ref{sha for U_0=I}.
\end{proof}

\begin{example}\label{2-power-bicyclic}
Let $k=\rat(i)$.
Let $K_0=k(\sqrt[4]{17})$, $K_1=k(\sqrt[4]{17\times 13})$ and $K_2=k(\sqrt[4]{13})$.
Then $M_{\cI'}(2)$ is a bicyclic extension of $k$ with Galois group $\ent/4\ent\times\ent/4\ent$.
Hence $\sha^2_\omega(k,\hat{T}_{L/k})\simeq\ent/4\ent$.

It is clear that $M_{\cI'}(2)_v$  is a product of cyclic extensions if $v$ is an unramified place.
Let $\mathcal{P}$ be the prime ideal associated to $v$.
If $M_{\cI'}(2)$ is ramified at $v$, then $\mathcal{P}\cap\ent\in\{(2),(13),(17)\}$.
Since $17$ is not a $4$-th power root in $\rat_{13}$, we have $M_{\cI'}(2)_{17}$ is not cyclic.

It is easy to check that $M_{\cI'}(1)$  is locally cyclic.
Hence by Proposition \ref{linearly disjoint} we have $\sha^2(k,\hat{T}_{L/k})\simeq\ent/2\ent$.
\end{example}

\begin{example}\label{2-power-bicyclic}
Let $k=\rat(i)$.
Let $K_0=k(\sqrt[4]{17})$, $K_1=k(\sqrt[4]{17\times 409})$ and $K_2=k(\sqrt[4]{409})$.
Then $M_{\cI'}(2)$ is a bicyclic extension of $k$ with Galois group $\ent/4\ent\times\ent/4\ent$.
Hence $\sha^2_\omega(k,\hat{T}_{L/k})\simeq\ent/4\ent$.

We claim that $M_{\cI'}(2)$  is locally cyclic.
Let $v\in\Omega_k$.
It is clear that $M_{\cI'}(2)_v$  is a product of cyclic extensions if $v$ is an unramified place.
Let $\mathcal{P}$ be the prime ideal associated to $v$.
If $M_{\cI'}(2)$ is ramified at $v$, then $\mathcal{P}\cap\ent\in\{(2),(17),(409)\}$.
However $409$ and $17$ are quartic residues of each other, and $17$ has a $4$-th root in $\rat_2$.
Therefore  $M_{\cI'}(2)$ is locally cyclic and $f_{U_0}=2$.
By Proposition \ref{linearly disjoint} we have $\sha^2(k,\hat{T}_{L/k})\simeq\ent/4\ent$.
In this case the weak approximation holds for $T_{L/k}$-torsors with a $k$-point.
\end{example}

\begin{prop}\label{sha omega of subfields of bicyclic}
Let $F$ be a bicyclic extension of $k$ with Galois group $\ent/p^n\ent\times\ent/p^n\ent$.
Let $K_i$ be distinct cyclic subfields of $F$ with degree $p^n$.
Then $$\sha^2_\omega(k,\hat{T}_{L/k})\simeq \underset{r\in\mathcal{R}\setminus\{0\}}{\oplus}\ent/p^{n-r}\ent \underset{r\in\mathcal{R}}{\oplus} \underset{l\geq L(U_r)}{\oplus} \underset{c\in U_r/\underset{l}{\sim} }{\oplus}(\ent/p^{n-l}\ent)^{n_{l+1}(c)-1}.$$
\end{prop}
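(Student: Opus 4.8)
The strategy is to specialize Theorem \ref{group structure of G omega}: once we show that in the present situation $\Delta^\omega_r=n$ for every $r\in\mathcal{R}$ and $f^\omega_c=n-L(c)+r$ for every class $c\in U_r/\underset{l}{\sim}$, the asserted description of $\sha^2_\omega(k,\hat T_{L/k})$ follows by substitution. Since every $K_i$ is a cyclic subfield of $F$ of degree $p^n$, we have $\epsilon_i=n$ for all $i\in\cI'$, so $\epsilon_0=n$, and all the standing hypotheses of the paper hold. Write $G=\Gal(F/k)\simeq\ent/p^n\ent\times\ent/p^n\ent$ and $H_i=\Gal(F/K_i)$; since $H_i$ is the kernel of a surjection $G\to\ent/p^n\ent$, it is cyclic of order $p^n$. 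As $K_i\cap K_j=F^{\langle H_i,H_j\rangle}$ has degree $p^{e_{i,j}}$, we get $|H_i\cap H_j|=p^{e_{i,j}}$, and because $H_i$ is cyclic, $H_i\cap H_j$ is the \emph{unique} subgroup of order $p^{e_{i,j}}$ of $H_i$ (equivalently of $H_j$). The crucial point of the whole argument is that the subgroups of the fixed cyclic group $H_0$ are totally ordered, so intersections of the $K_i$'s with $K_0$ behave monotonically. Recall also that $e_{i,j}\geq r$ for $i,j\in U_r$, so $L(c)\geq r$ for every class $c\subseteq U_r$.

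First I would verify $\Delta^\omega_r=n$ for every $r$ with $U_r$ nonempty. By Definition \ref{alg patching degree} it suffices to check that $d=n$ satisfies conditions (1) and (2); since $K_i(n)=K_i$, both amount to the claim that $K_s\subseteq K_0K_i$ whenever $e_{0,s}\geq e_{0,i}$ (in (1): $s\in U_{>r}$, $i\in U_r$; in (2): $s\in U_r$, $i\in U_{<r}$). In Galois terms this reads $H_0\cap H_i\subseteq H_s$; but $H_0\cap H_i$ and $H_0\cap H_s$ are the subgroups of $H_0$ of orders $p^{e_{0,i}}$ and $p^{e_{0,s}}$, so $e_{0,i}\leq e_{0,s}$ forces $H_0\cap H_i\subseteq H_0\cap H_s\subseteq H_s$, as required. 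Hence $\Delta^\omega_r\geq n$, and as $\Delta^\omega_r\leq\epsilon_0=n$, equality holds. This already yields the summands $\ent/p^{\,n-r}\ent$ for $r\in\mathcal{R}\setminus\{0\}$.

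Next I would compute $f^\omega_c=n-L(c)+r$ for a class $c\in U_r/\underset{l}{\sim}$. Put $l_c=L(c)$ and choose $i,j\in c$ with $e_{i,j}=l_c$; by Lemma \ref{generator of bicyclic}, $M_c(g)=K_i(g)K_j(g)$ for $g\geq l_c$, a subfield of the bicyclic extension $F$, so condition (1) of Definition \ref{alg deg of freedom of U_r} (and its recursive analogue) is automatic. For condition (2) one computes $K_0\cap K_iK_j=K_0(n-l_c+r)$: indeed $\Gal(F/K_iK_j)=H_i\cap H_j$ has order $p^{l_c}$, and $H_0\cap H_i\cap H_j=H_0\cap H_i$ (of order $p^r$) since $l_c\geq r$ and $H_i$ is cyclic, whence $|\langle H_0,H_i\cap H_j\rangle|=p^{\,n+l_c-r}$, i.e. $[K_0\cap K_iK_j:k]=p^{\,n-l_c+r}$. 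Thus $K_0(f)\subseteq M_c(n)$ exactly when $f\leq n-l_c+r$, and for $r\leq f\leq n-l_c+r$, Lemma \ref{subfield of bicyclic} applied to $R=K_0(f)$ (for which $K_i\cap K_j\cap R=K_0(r)$) upgrades this to $K_0(f)\subseteq K_i(f+l_c-r)K_j(f+l_c-r)=M_c(f+l_c-r)$. Since $n-l_c+r\leq n=\Delta^\omega_r$ and, inductively, $n-l_c+r\leq n-L(c_0)+r=f^\omega_{c_0}$ for the parent class $c_0$ (using $L(c_0)\leq l_c$), while $f=n-l_c+r+1$ already fails $K_0(f)\subseteq M_c(n)$, we conclude $f^\omega_c=n-l_c+r$.

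Finally, substituting $\Delta^\omega_r=n$ and $f^\omega_c=n-L(c)+r$ into Theorem \ref{group structure of G omega}: the factor attached to $c\in U_r/\underset{l}{\sim}$, namely $(\ent/p^{\,f^\omega_c-r}\ent)^{n_{l+1}(c)-1}$, is trivial unless $L(c)=l$ (otherwise $n_{l+1}(c)=1$), and for $L(c)=l$ it equals $(\ent/p^{\,n-l}\ent)^{n_{l+1}(c)-1}$; together with the $\ent/p^{\,n-r}\ent$ for $r\in\mathcal{R}\setminus\{0\}$ this is precisely the claimed isomorphism. I expect the only delicate bookkeeping to be the application of Lemma \ref{subfield of bicyclic} — one must confirm that $K_i\cap K_j\cap K_0(f)$ has degree $p^r$, so that the shift $f+e_{i,j}-h$ produced by the lemma comes out as $f+l_c-r$ — while everything else reduces to the total ordering of subgroups of the cyclic $p$-group $H_0$.
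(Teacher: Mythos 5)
Your proof is correct and takes essentially the same route as the paper's: establish $\Delta^\omega_r=n$ and $f^\omega_{c}=n-L(c)+r$ using the cyclicity of $\Gal(F/K_0)$ and $\Gal(F/K_i)$ (the paper phrases this as uniqueness of subfields of a given degree in the cyclic extensions $F/K_0$ and $F/K_i$), then substitute into Theorem \ref{group structure of G omega}. Your write-up merely supplies more detail than the paper does, notably in upgrading $K_0(f)\subseteq M_{c}(n)$ to $K_0(f)\subseteq M_{c}(f+L(c)-r)$ via Lemma \ref{subfield of bicyclic}.
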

\begin{proof}
Regard $F$ as a cyclic extension of $K_0$.

For a nonempty $U_r$ and all $i\in U_r$, the field $K_0K_i$ is the unique degree $p^{n-r}$ extension of $K_0$ contained in $F$.
Hence $\Delta_r^\omega=n$ for all $r\in\cR$.

For $l\geq L(U_r)$, the field $M_{c}(n)$ is contained in $F$ and its Galois group is isomorphic to $\ent/p^n\ent\times \ent/p^{n-L(c)}$.
We claim that $f^\omega_{c}=n-L(c)+r$.
Regard $F$ as a cyclic field extension of $K_i$.
As subfields of $F$, both $K_0(n-L(c)+r)K_i$ and $M_{c}(n)$ are cyclic extensions of $K_i$ of degree $p^{n-L(c)}$. Hence $K_0(n-L(c)+r)K_i=M_{c}(n)$ and $f^\omega_{c}=n-L(c)+r$.

For a class $c\in U_r/\underset{l}{\sim}$, we have $n_{l+1}(c)>1$ if and only if $L(c)=l$.
The proposition then follows.

\end{proof}

\begin{example}
Let $k=\rat(i)$.
Let $K_0=k(\sqrt[4]{13})$, $K_1=k(\sqrt[4]{17})$, $K_2=k(\sqrt[4]{13\times 17^2})$.
Then $1\in U_0$ and $2\in U_1$.
By Proposition \ref{sha omega of subfields of bicyclic}, we have $\sha^2_\omega(k,\hat{T}_{L/k})\simeq \ent/2\ent$.
As the field $K_0K_2$ is locally cyclic, we have
$\Delta_1=2$.
Hence $\sha^1(k,T_{L/k})\simeq \ent/2\ent$.
In this case the weak approximation holds for $T_{L/k}$-torsors with a $k$-point.
\end{example}

\bigskip
Ting-Yu Lee

Technische Universit\"{a}t Dortmund

Fakult\"{a}t f\"{u}r Mathematik

Lehrstuhl LSVI

Vogelpothsweg 87

44227 Dortmund, Germany

\medskip

tingyu.lee@gmail.com

\end{document}